\theoremstyle{plain}
\newtheorem{thm}{Theorem}[section]
\newtheorem{prop}[thm]{Proposition}
\newtheorem{lemma}[thm]{Lemma}
\newtheorem{cor}[thm]{Corollary}
\newtheorem{rem}[thm]{Remark}
\theoremstyle{definition}
\newtheorem{defn}[thm]{Definition}
\newtheorem{ex}[thm]{Example}
\newcommand{\Z}{\mathbb{Z}}
\newcommand{\C}{\mathbb{C}}
\newcommand{\N}{\mathbb{N}}
\newcommand{\g}{\mathfrak{g}}
\newcommand{\h}{\mathfrak{h}}
\newcommand{\n}{\mathfrak{n}}
\newcommand{\q}{\mathfrak{q}}
\newcommand{\RN}[1]{%
  \textup{\uppercase\expandafter{\romannumeral#1}}%
}
\newcommand{\A}{\mathbf{A}_{\lambda}^{\Gamma}}
\newcommand{\w}{w_{\lambda}^{\Gamma}}
\newcommand{\R}{\mathbf{R}_{\lambda}^{\Gamma}}
\newcommand{\lw}{w_{\psi}^{\Gamma}}
\newcommand{\LW}{W_{loc}^{\Gamma}(\psi)}
\DeclareMathOperator{\Ann}{Ann}
\DeclareMathOperator{\Hom}{Hom}
\DeclareMathOperator{\Ext}{Ext}
\DeclareMathOperator{\Supp}{Supp}
\DeclareMathOperator{\het}{ht}
\DeclareMathOperator{\wt}{wt}
  \newcommand{\comments}[1]{
    \begin{center}
      \parbox{6.5 in}{
        \color{red}
          {\footnotesize \textbf{Comments:} #1}
        \color{black}}
    \end{center}}
  \newcommand{\comments}[1]{}
  \newcommand{\details}[1]{
      \ \\
      \color{OliveGreen}
        \begin{footnotesize}
          \textbf{Details:} #1
        \end{footnotesize}
      \color{black}
      \\
  }
  \newcommand{\details}[1]{}
\begin{document}

\title{Weyl modules for Equivariant map Lie superalgebras}

\author[Lakshmi S K]{Lakshmi S K}
\address{Lakshmi S K, Department of Mathematics, National Institute of Technology Calicut, NIT Campus P.O., Kozhikode-673 601, India}
\email{lakshmi\_p250569ma@nitc.ac.in}

\author[Nayak]{Saudamini Nayak}
\address{Saudamini Nayak, Department of Mathematics, National Institute of Technology Calicut, NIT Campus P.O., Kozhikode-673 601, India}
\email{saudamini@nitc.ac.in}

\begin{abstract}
 Equivariant map superalgebras are Lie superalgebras of algebraic maps from a scheme to a target finite dimensional Lie superalgebra that are equivariant with respect to the action of a (cyclic) group. In this paper, we extend the notion of Weyl modules, previously defined for the untwisted case, to the case of equivariant(twisted) map superalgebras. Consider $\mathbb{K}$ be an algebraically closed field of characteristic $0$. We define global Weyl modules, and Weyl functors for equivariant map Lie superalgebras $(\g\otimes A)^{\Gamma}$, where $\g$ is a basic classical Lie $\mathbb{K}$-superalgebra and $A$ is an associative commutative unital $\mathbb{K}$-algebra. Under certain assumption on the triangular decomposition of $\g$, we prove that global Weyl modules are universal objects in certain category. We introduce a commutative algebra $\mathbf{A}_{\lambda}^{\Gamma}$ and further prove that global Weyl modules are finitely generated $\mathbf{A}_{\lambda}^{\Gamma}$-modules when $A$ is finitely generated. Finally we define the local Weyl modules for $(\g\otimes A)^{\Gamma}$, where $\g$ is basic classical, using Weyl functors. We show that they are finite dimensional irrespective of the triangular decomposition of $\g^{\Gamma}$. Finally it has been shown that twisted local Weyl modules of $(\g\otimes A)^{\Gamma}$ are precisely the image of the untwisted local Weyl modules under the twisting functor $\textbf{T}_{\mathbf{x}}$.
\end{abstract}

\subjclass[2020]{17B65, 17B10.}
\keywords{Equivariant map Lie superalgebras, Weyl functors,  Global  and Local Weyl modules, Commutative Algebra.}

\maketitle
\section{Introduction}\label{sec-intro}
The concept of Lie superalgebras became mainstream  area of research in the 1970s, motivated largely by the idea of supersymmetry in theoretical Physics. Lie superalgebras $\g=\g_{\bar{0}}\oplus\g_{\bar{1}}$ are generalizations of Lie algebras in the sense that $\g$ is a Lie algebra when the odd part $\g_{\bar{1}}=0$.  The theory of Lie superalgebras and their representations have a wide range of application in many areas of Physics and Mathematics such as string theory, quantum mechanics, conformal field theory, algebraic geometry, category theory and number theory. In 1975, Kac offered a comprehensive description of the mathematical theory of Lie superalgebras and established the classification of all finite dimensional simple Lie superalgebras $\g$ over an algebraically closed field of characteristic zero \cite{Kac1977}. These are divided into three groups namely: basic(classical and exceptional) Lie superalgebras, the strange ones(periplectic and queer) and the ones of Cartan type. In recent times there has been much interest in understanding  map Lie superalgebras $\g\otimes A$, where $\g$ is a simple finite dimensional Lie superalgebra and $A$ is some commutative associative unital algebra with Lie bracket is defined pointwise. They form a large class of Lie superalgebras with examples include loop $(A=\mathbb{C}[t^{\pm 1}])$, multi loop $(A=\mathbb{C}[t_1^{\pm 1}, \cdots, t_n^{\pm 1}])$ and current $(A=\mathbb{C}[t])$ superalgebras which play a very significant role in the theory of affine Kac-Moody Lie superalgebras. Kac classified the simple finite dimensional representations of basic classical Lie superalgebras \cite{Kac77a,Kac1978}. Classification of finite dimensional modules of map superalgebras is an extremely active area of research (\cite{Rao13, Sav14, CMS16, RZ04}). But the category of finite dimensional modules of map Lie algebras isn't semisimple. So
 Weyl modules play an important role in the representation theory of infinite dimensional (map) Lie algebras.

 Given a simple finite dimensional Lie algebra $\g$ over $\mathbb{C}$, Weyl modules for loop algebras $\g\otimes \mathbb{C}[t^{\pm 1}]$ were introduced by Chari and Pressley in \cite{CP01}. These modules are indexed by dominant integral weights of $\g$ and are closely related to certain irreducible modules for quantum affine algebras. Instead of the loop algebra, Feigin and Loktev \cite{FL04} extended the notion of Weyl modules for the higher dimensional case, in which they worked with the Lie algebra $\g\otimes A$ where A is the coordinate ring of an algebraic variety and obtained results analogous to \cite{CP01}. Later, in \cite{CFK10}, Chari et. al., considered a more general functorial approach to (global and local) Weyl modules associated with the algebras $\g\otimes A$, where $A$ is commutative associative unital algebra over $\mathbb{C}$. It was proved that the global Weyl module is a projective object in a suitable category and its weight spaces are right modules for a certain commutative algebra. Concepts of Weyl functors were introduced and the local Weyl modules along with their homological properties were also studied.

In \cite{CFS08, FMS13}, $\g\otimes\mathbb{C}[t^{\pm 1}]$ was replaced by the twisted loop algebra, denoted, by $(\g\otimes\mathbb{C}[t^{\pm 1}])^{\Gamma}$ which is the fixed point subalgebra of $\g\otimes\mathbb{C}[t^{\pm1}]$ under the action of a group $\Gamma$ of automorphisms of $\g$, generated by a Dynkin diagram automorphism. This acts on $\mathbb{C}[t^{\pm1}]$ by scaling $t$ by roots of unity. They were able to describe an identification of the Weyl modules for the twisted affine algebras with suitably chosen Weyl modules for the corresponding untwisted affine algebras. This identification allowed them to use known results in the untwisted case to compute the dimension and characters of the Weyl modules for the twisted algebras. In \cite{FKKS12}, local Weyl module for equivariant map algebras were defined under the assumption that the scheme is of finite type, group is abelian and the action on the scheme is free. The key ingredient was to introduce the notions of twisting and untwisting functors that relate the representation theory of map and equivarinat map algebras. In \cite{FMS15}, the global Weyl modules for equivariant map algebras were defined in terms of generators and relations. The notion of Weyl functors were also extended to the twisted settings. 

Then Zhang in \cite{Zha14}, first define and study the Weyl modules in the spirit of Chari-Pressley for a quantum analogue in the loop case for $\g = \mathfrak{sl}(m, n)$. In \cite{CLS19}, Calixto, Lemay and Savage  study Weyl modules for Lie superalgebras of the form $\g\otimes_{\C} A$, where A is an associative commutative unital $\C$-algebra and $\g$ is a basic complex Lie superalgebra or $\mathfrak{sl}(n, n), n \geq 2$. Particularly, they define (global and local) Weyl modules for the Lie superalgebras $\g\otimes_{\C} A$ and prove that global Weyl modules are universal highest weight objects in a certain category and local Weyl modules are finite-dimensional provided $A$ is finitely generated. Furthermore recently, Bagci, Calixto, and Macedo \cite{BCM19} studied (global and local) Weyl modules and Weyl functors for superalgebras $\g\otimes_{\C} A$, where $\g$ is either $\mathfrak{sl}(n, n),\ n\geq 2$, or any finite-dimensional complex simple Lie superalgebra not of type $\q(n)$, and where A is an associative commutative unital $\C$-algebra. In \cite{Nayak25}, second author studied global and local Weyl modules for Lie superalgebras $\q(n)\otimes_{\C} A$, where $\q(n)$ is the queer Lie superalgebra. Savage classified irreducible finite dimensional representation of equivariant map Lie superalgebras $(\g\otimes A)^{\Gamma}$, with the assumption that $\g$ is finite dimensional basic Lie superalgebra, $A$ is finitely generated and $\Gamma$ is an abelian group \cite{Sav14}. In this paper, we define and study global and local Weyl modules for equivariant map Lie superalgebras for certain basic classical Lie superalgebras. 

 It is worth mentioning here that Weyl modules for Lie superalgebras have many analog results as their non-super part. However, there are some striking differences. The Borel Lie superalgebras of basic Lie superalgebras are not conjugate under the action of Weyl group. Hence  the notation of Weyl modules depend upon the choice of simple root systems, which is in contrast to the situation on finite dimensional simple Lie algebras. Further, the category of finite dimensional modules for basic Lie superalgebras is not semisimple in general. Hence Kac-modules play an important role in the representation theory, which are maximal finite dimensional modules of a given highest weight. The Weyl modules defined in this paper can be viewed as the unification of several types of modules. If $\Gamma$ is trivial, then the Weyl modules defined here are Weyl modules for (untwisted) map Lie superalgebras. If $\g$ is a simple Lie algebra, then the definition reduces to the Weyl module in the non-super setting. On the other hand, if $A=\mathbb{C}$ then the global and local Weyl modules are the same and coincide with the Kac-module. In addition, if $\g$ is a simple Lie algebra, this is the irreducible module of a given highest weight.  
 
 The paper is structured as follows. In Section \ref{sec-prelim}, we provide basic definitions and results related to a $\mathbb{K}-$ Lie superalgbra which we use throughout. In Section \ref{sec:1}, we  define a triangular decomposition for $\g$ that is compatible with that of its invariant subalgebra $\g^{\Gamma}$ and we define highest weight module over $\g^{\Gamma}$. In Section \ref{sec:2} we recall the definitions of the map superalgebras, its highest weight modules and extend this concept to the equivariant map Lie superalgebras. We also give the structure and root space decomposition of $\g^{\Gamma}$.
 In Section \ref{sec:5}, we develop the theory of global Weyl modules for $(\g\otimes A)^{\Gamma}$. In Sections \ref{sec:6} and \ref{sec:7}, we discuss the construction of twisted Weyl functor, and the structure of global Weyl modules. We prove that, if $A$ is a finitely generated associative, unital $\mathbb{K}-$algebra, then the global Weyl module is a finitely generated $\A$-module. In Section \ref{sec:8}, we define the local Weyl module, corresponding to a dominant integral weight of $\g^{\Gamma}$, using the generator relations. We also prove that for any triangular decomposition of $\g^{\Gamma}$, the local Weyl module is finite dimensional. 
 In Section \ref{sec:9}, we prove that a twisted local Weyl module can be defined as an image of the untwisted local Weyl module under the action of the twisting functor. 

 For the reader's convenience, we give here some important notation used in this paper. 
  \begin{center}
      {\sc Index of Notation}
  \end{center} 
    \noindent
\begin{minipage}[t]{0.48\textwidth}
\centering
    \begin{tabular}{p{5cm}p{2.5cm}}
       $\mathbf{U}(\g)$  & Page \pageref{pg:1} \\
       $\Phi$, $\phi$  & Page \pageref{pg:2}\\
       $\g^{\alpha}$ & Equation \ref{eq:9}\\
       $R,~R_{\bar{0}}^{\pm},~R_{\bar{1}}^{\pm},~\Delta,~\Delta_{\bar{0}},~\Delta_{\bar{1}}$ & Page \pageref{pg:3}\\
       $P^{+}$ & Page \pageref{pg:4}\\
       $\Lambda$, $\Lambda^{+}$ & Page \pageref{pg:4} and equation \ref{eq:10}\\
       $Q_{\Gamma}$, $Q_{\Gamma}^{+}$ & Page \pageref{pg:6} and equation \ref{eq:10}\\
       $\mathcal{I}$, $\mathcal{I}^{\Gamma}$  & Subsection \ref{sub:1}\\
       $\mathcal{I}_{\lambda}$, $\mathcal{I}_{\lambda}^{\Gamma}$ & Page \pageref{pg:5}\\
       $W^{\Gamma}(\lambda)$ & Definition \ref{def:4}\\
       $\A$ & Equation \ref{eq:11}\\
       $\A-mod$ & Subsection \ref{sub:2}\\
       $W^{\Gamma}_{\lambda}$ & Definition \ref{def:5}\\
       $R_{\lambda}^{\Gamma}$ & Definition \ref{def:6}\\
       $W^{\Gamma}_{loc}(\psi)$ & Definition \ref{def:7}\\
       $\mathcal{L}(\h\otimes A)^{\Gamma}$ & Definition \ref{def:8}\\
       $\Supp J$ & Definition \ref{def:9}\\ $\Ann_{A}^{\Gamma}V$,  $\Supp^{\Gamma}_{A}V$ & Definition \ref{def:10}\\ 
        $X_{*}$ & Page \pageref{pg:7}\\
       \end{tabular}
\end{minipage}
\hfill
\begin{minipage}[t]{0.48\textwidth}
\centering
      \begin{tabular}{p{5cm}p{2.5cm}}

       $\mbox{ev}_{\textbf{x}}^{\Gamma}$, $\mbox{ev}^{\Gamma}_{\textbf{x}}(\rho_{x})_{x\in\textbf{x}}$ & Definition \ref{def:11}\\
       $\mbox{ev}^{\Gamma}_{\textbf{m}_{1}^{n_{1}},\cdots,\textbf{m}_{l}^{n_{l}}}$, $\mbox{ev}^{\Gamma}_{\textbf{m}_{1}^{n_{1}},\cdots,\textbf{m}_{l}^{n_{l}}}(\rho_{1},\cdots,\rho_{l})$ & Definition \ref{def:12}\\
       $I_{\eta}$ & Page \pageref{pg:7}\\
       $R(\g)$,  $\mbox{ev}_{\Psi}$ & Page \pageref{pg:8}\\
        $\Psi$, $\varepsilon(X,\g)=\varepsilon$, $\varepsilon(X,\g)^{\Gamma}=\varepsilon^{\Gamma}$ & Page \pageref{pg:8} or \pageref{pg:10}\\
        $\varepsilon_{\lambda}$, $\varepsilon^{\Gamma}_{\lambda}$ & Page \pageref{pg:12}\\ 
        $\mbox{ev}_{\Psi}^{\Gamma}$ & Definition \ref{def:13}\\
       $V_{\eta}$, $\mathcal{F}$, $\mathcal{F}^{\Gamma}$ & Page \pageref{pg:9}\\
       $\mathcal{F}_{\textbf{x}}$, $\mathcal{F}_{\textbf{x}}^{\Gamma}$ & Definition \ref{def:14}\\
       $\textbf{T}$, $\textbf{T}_{\textbf{x}}$ & Definition \ref{def:15}\\
       $\textbf{U}_{\textbf{x}}$ & Definition \ref{def:16}\\
       $Y_{\Gamma}$, $\Psi_{\textbf{x}}$, $\wt(\Psi)$, $\wt_{\Gamma}(\Psi)$, $ht\Psi$, $\het_{\Gamma}\Psi$ & Page \pageref{pg:11}-\pageref{pg:13}\\
       $W(\Psi)$ & Definition \ref{def:1} and \ref{def:3}\\
       $V(\Psi)$, $V^{\Gamma}(\Psi)$ & Definition \ref{def:17}\\
       $V_{\lambda}^{\Gamma}$ & Definition \ref{def:18}\\
       $M(\Psi)$, $M^{\Gamma}(\Psi)$ & Definition \ref{def:2}\\
       $W^{\Gamma}(\Psi)$ & Definition \ref{def:3}\\
       
       \end{tabular}
\end{minipage}

\section{Preliminaries}\label{sec-prelim}
Throughout the paper, the ground field is $\mathbb{K}$. All the vector superspaces, modules, algebras, Lie superalgebras etc. considered are over $\mathbb{K}.$ Set $\mathbb{Z}_{2}=\mathbb{Z}/ 2 \mathbb{Z}$.
\subsection{Basic classical Lie superalgebras}
  A vector superspace is a vector space that is endowed with a ${\Z}_2$- gradation: $V= V_{\bar{0}}\oplus V_{\bar{1}}$. The dimension of the vector superspace $V$ is the tuple $(\dim V_{\bar{0}} \mid \dim V_{\bar{1}})$. The parity/degree of a homogeneous element $a\in V$ is denoted by $|a|=i$ where $i\in \{0,1\}$. The element in $V_{\bar{0}}$ is called even and the element in $V_{\bar{1}}$ is called odd.

A \emph{Lie superalgebra} is a vector superspace ${\g}={\g_{\bar 0}}\oplus {\g_{\bar 1}}$ with bilinear multiplication $[\cdot,\cdot]$ that satisfies the following axioms:
  \begin{enumerate}[label=(\alph*)]
    \item The multiplication respects the grading: $[\g_i,\g_j] \subseteq \g_{i+j}$ for all $i, j \in \Z_2$.
    \item Skew-supersymmetry: $[a, b]=-(-1)^{|a||b|}[b,a]$, for all homogeneous elements $a, b\in \g$.
    \item Super Jacobi identity: $[a,[b, c]]=[[a, b],c]+(-1)^{|a||b|}[b,[a, c]]$, for all homogeneous elements $a, b, c\in \g$.
  \end{enumerate}

\begin{ex}
Let $A$ be any associative superalgebra. Then we can make $A$ into a Lie superalgebra by defining $[a, b]:=a b -(-1)^{|a||b|}b a$ for all homogeneous elements $a, b \in A$ and extending $[., .]$ by linearity. We call this is the Lie superalgebra associated with $A$. A concrete example is the general linear Lie superalgebra $\mathfrak{g l}(V)$ associated with associative superalgebra $End(V)$ of all linear operators on a vector superspace $V$.
\end{ex}
By a {\it homomorphism} between superspaces $f: V \rightarrow W $ of degree $|f|\in \mathbb{Z}_{2}$, we mean a linear map satisfying $f(V_{\alpha})\subseteq W_{\alpha+|f|}$ for $\alpha \in \mathbb{Z}_{2}$. In particular, if $|f| = \bar{0}$, then the homomorphism $f$ is called homogeneous linear map of even degree.
A homomorphism $\rho$ between  Lie superalgebras is a map which preserves the structure in them. Precisely $\rho: \g \longrightarrow \g'$ is an even linear map with $\rho([x, y])=[\rho x, \rho y]$ for all $x, y \in \g.$
A representation of Lie superalgebra $\g$ is a homomorphism $\rho: \g \longrightarrow \mathfrak{gl}(V)$, i.e., $\rho$ is an even linear map 
with $\rho[x, y]=\rho(x) \rho(y)- (-1)^{ |x| |y|} \rho(y) \rho(x) $ for all $x, y \in \g$. Alternatively $V$ is called a $\g$-module and $V$ is irreducible if there are no sub-modules other than $0$ and $V$ itself.

Observe that ${\g_{\bar 0}}$ inherits the structure of a Lie algebra and that ${\g_{\bar 1}}$ inherits the structure of a ${\g_{\bar 0}}$-module with respect to the adjoint representation. A Lie superalgebra is said to be simple if there are no non-zero proper ideals, i.e., there are no non-zero proper supersubspaces $i\subset \g$ such that $[i,\g]\subseteq i$. A finite dimensional simple Lie superalgebra ${\g}={\g_{\bar 0}}\oplus {\g_{\bar 1}}$ is said to be classical if the ${\g_{\bar 0}}$-module ${\g_{\bar 1}}$ is completely reducible. A simple Lie superalgebra is classical if and only if its even part ${\g_{\bar 0}}$ is a reductive Lie algebra.

 If ${\g}$ is the classical Lie superalgebra, then the adjoint representation of ${\g_{\bar 0}}$ on ${\g_{\bar 1}}$ is either
\begin{enumerate}[label=(\alph*)]
    \item irreducible, in which case we say that ${\g}$ is of type $\textbf{II}$, or
    \item the direct sum of two irreducible representations, in which case we say that ${\g}$ is of type $\textbf{I}.$
\end{enumerate}
A bilinear form $(., .)$ on a Lie superalgebra $\g$ is called consistent if $(x, y)=0$ for all $x\in \g_{\bar{0}}$ and $y \in \g_{\bar{1}}.$ It is called supersymmetric if $(x, y)=(-1)^{|x||y|} (y, x)$ for all $x, y \in \g $ and it is invariant if $([[x, y], z])=([x, [y, z]])$ for all $x, y, z \in \g.$ Two invariant bilinear forms on a simple Lie superalgebra $\g$ are  \cite{Mus12,Frappat2000}. An invariant bilinear form on a simple Lie superalgebra $\g$ is either non-degenerate or identically zero.
A classical Lie superalgebra ${\g}$ having a non-degenerate invariant bilinear form is called basic (otherwise it is called strange). All basic classical Lie superalgebras are perfect, that is, $[\g,\g]=\g$. The even part of every basic classical Lie superalgebra is either semisimple or reductive with one dimensional center. The bilinear form associated to the adjoint representation of a Lie superalgebra $\g$, is called the Killing form and is denoted as $K(x, y)$. It is defined by $K(x, y)= \mbox{str}(\mathrm{ad}_x, \mathrm{ad}_y)$ for all $x, y \in \g$. The Killing form is consistent, supersymmetric and invariant bilinear form on $\g$. In addition, $K(\sigma(x), \sigma(y))=K(x, y)$ for all $\sigma $ in the automorphism group of $\g$ and $x, y \in \g.$

For any basic classical Lie superalgebra $\g$, there exists a distinguished ${\Z}$-grading $\g=\oplus_{i\in{\Z}}\g_i$ that is compatible with the ${\Z}_2-$ grading and such that \cite{BCM19}
\begin{enumerate}[label=(\alph*)]
\item if $\g$ is of type $\textbf{I}$, then $\g_i=0$ for $|i|>1, {\g_{\bar 0}}=\g_0, {\g_{\bar1 }}=\g_{-1}\oplus \g_1$
\item if $\g$ is of type $\textbf{II}$, then $\g_i=0$ for $|i|>2, \g_{\bar{0}}=\g_{-2}\oplus\g_0\oplus\g_{2}, \g_{\bar{1}}=\g_{-1}\oplus\g_1$.
\end{enumerate}

The list of basic classical Lie superalgebras of type $\textbf{I}$ consists of $\mathfrak{osp}(2 \mid 2n), \mathfrak{sl}(m \mid n)$  for $m \neq n$ and $\mathfrak{psl}(n \mid n)$ for $n \geq 1.$
  The list of basic classical Lie superalgebras of type $\textbf{II}$ consists of $\mathfrak{osp}(m \mid 2n)$ for $m \neq 2$, $ D(2,1; \alpha), F(4)$ and $G(3)$.
\begin{lemma} [\cite{Sav14}]
Suppose $\g$ is a Lie superalgebra and $V$ is an irreducible $\g$-module such that $Iv=0$ for some ideal $I$ of $\g$ and non-zero vector $v\in V$. Then $IV=0$.
\end{lemma}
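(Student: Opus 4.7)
The plan is to mimic the standard Lie-algebra argument by considering the annihilator subspace
\[
W = \{w \in V : Iw = 0\},
\]
which is nonzero because it contains $v$. The key steps are to verify (a) that $W$ is $\g$-invariant via the ideal property $[\g, I] \subseteq I$, and (b) that $W$ is $\Z_2$-graded so that it is a bona fide subobject in the category of $\g$-supermodules; irreducibility of $V$ will then force $W = V$, which is exactly $IV = 0$.

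For invariance, I take homogeneous elements $x \in \g$ and $y \in I$ and any $w \in W$, and apply the module analogue of super skew-symmetry,
\[
y(xw) \;=\; [y,x]\,w \;+\; (-1)^{|x||y|}\, x(yw).
\]
The second term vanishes because $yw = 0$, and the first vanishes because $[y,x] \in I$ annihilates $w$ (since $I$ is an ideal and $w \in W$). Extending by linearity in $x$ and $y$ gives $I(xw) = 0$, so $xw \in W$.

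For the grading, I write an arbitrary $w \in W$ as $w = w_{\bar 0} + w_{\bar 1}$ with $w_{\bar 0} \in V_{\bar 0}$ and $w_{\bar 1} \in V_{\bar 1}$, and observe that for any homogeneous $y \in I$, the vectors $yw_{\bar 0}$ and $yw_{\bar 1}$ lie in different $\Z_2$-components of $V$, so $yw = 0$ forces each summand to vanish. Since the homogeneous elements span $I$, this yields $Iw_{\bar 0} = Iw_{\bar 1} = 0$, so both components lie in $W$.

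Combining the two observations, $W$ is a $\Z_2$-graded $\g$-submodule of $V$ containing the nonzero vector $v$; by irreducibility $W = V$, i.e. $IV = 0$. There is no substantive obstacle here — the only subtle point is remembering to verify that $W$ is a graded subspace, so that irreducibility of $V$ in the super sense (no nonzero proper graded submodules) actually applies.
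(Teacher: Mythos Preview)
Your argument is correct and is precisely the standard proof of this fact. Note, however, that the paper does not actually supply its own proof of this lemma: it is stated with a citation to \cite{Sav14} and left unproved. So there is nothing in the paper to compare against beyond the bare statement; your write-up would serve perfectly well as the omitted proof.
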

Given a Lie superalgebra $\g$, we denote $\mathbf{U}(\g)$ \label{pg:1} its {\em universal enveloping superalgebra}. The universal enveloping superalgebra $\mathbf{U}(\g)$ is constructed from the tensor algebra $T(\g)$ by factoring out  the ideal generated by elements $[u, v] - u\otimes v + (-1)^{|u||v|} v\otimes u$, for homogeneous elements $u, v$ in $\g$. Now we state an analogous of Poincare-Birkhoff-Witt(PBW) Theorem in super setting, which ensures that $\g \mapsto \mathbf{U}(\g)$ is an inclusion by precisely giving a basis for $\mathbf{U}(\g)$.

\begin{lemma}[\cite{Mus12}, Theorem 6.1.1] \label{lem:0}
Let $\g = \g_{\bar0} \oplus \g_{\bar1}$ be a Lie superalgebra. If $x_1, \ldots, x_m$ be a basis of $\g_{\bar0}$ and $y_1, \ldots, y_n$ be a basis of $\g_{\bar1}$, then the monomials
\[x_1^{a_1}\cdots x_m^{a_m} y_1^{b_1}\cdots y_n^{b_n}, \quad a_1, \ldots, a_m \geq 0, \quad \mbox{and}\quad b_1, \ldots, b_n \in \{0, 1\},\]
form a basis of $\mathbf{U}(\g)$. This is called the Poincare-Birkhoff-Witt(PBW) basis. In particular, if $\g$ is finite dimensional and $\g_{\bar 0} = 0$, then $\mathbf{U}(\g)$ is finite dimensional.
\end{lemma}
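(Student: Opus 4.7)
The plan is to adapt the classical PBW argument to the $\Z_2$-graded setting, handling spanning and linear independence separately. Let $\mathcal{B}$ denote the set of ordered monomials $x_1^{a_1}\cdots x_m^{a_m} y_1^{b_1}\cdots y_n^{b_n}$ with $a_i \in \Z_{\geq 0}$ and $b_j \in \{0,1\}$. I will show that $\mathcal{B}$ both spans $\mathbf{U}(\g)$ and is linearly independent there; the ``in particular'' statement will then be immediate, since with $\g_{\bar 0}=0$ one has $|\mathcal{B}| \leq 2^n$.

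For spanning, the algebra $\mathbf{U}(\g)$ is generated by $x_1, \ldots, x_m, y_1, \ldots, y_n$, so it suffices to rewrite an arbitrary product of generators as a linear combination of elements of $\mathcal{B}$. I would filter $\mathbf{U}(\g)$ by total degree and proceed by induction on degree, with a secondary induction on the number of inversions relative to the chosen order of generators. Whenever adjacent generators $u, v$ are out of order, apply the defining relation $uv = (-1)^{|u||v|} vu + [u,v]$: the first term has one fewer inversion while $[u,v] \in \g$ strictly reduces degree and is handled by the outer induction. The exponent restriction $b_j \in \{0,1\}$ is forced by skew-supersymmetry applied to odd elements: $y_j y_j = \tfrac{1}{2}[y_j, y_j]$ strictly reduces degree.

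The hard step is linear independence. The clean strategy is to produce a representation of $\mathbf{U}(\g)$ whose action on $1$ sends distinct monomials of $\mathcal{B}$ to linearly independent vectors. Let $P = \C[X_1, \ldots, X_m] \otimes \Lambda[Y_1, \ldots, Y_n]$, viewed as a supercommutative superalgebra with the natural $\Z_2$-grading. I would define a linear map $\rho: \g \to \gl(P)$ on generators and check that $\rho([z,w]) = \rho(z)\rho(w) - (-1)^{|z||w|}\rho(w)\rho(z)$ for $z, w$ among the generators, so that $\rho$ extends to an algebra homomorphism $\tilde\rho: \mathbf{U}(\g) \to \End(P)$. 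The operators $\rho(x_i), \rho(y_j)$ are tailored so that $\tilde\rho(x_1^{a_1}\cdots y_n^{b_n})(1)$ equals the ordered monomial $X_1^{a_1}\cdots Y_n^{b_n}$ in $P$. Since ordered monomials are a basis of $P$, any linear relation among the elements of $\mathcal{B}$ in $\mathbf{U}(\g)$ would transport to a relation among basis elements of $P$, forcing all coefficients to vanish.

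The main obstacle is constructing $\rho$ and verifying the super Jacobi identity with the correct signs. I would define $\rho(z)$ on $P$ by induction on a compatible filtration $P = \bigcup_k P_k$ (by total degree in the $X_i, Y_j$), specifying $\rho(z)$ via straightening formulas that mimic the spanning computation: roughly, $\rho(x_i)$ is left multiplication by $X_i$ plus a correction coming from brackets of $x_i$ with already-present generators, and analogously for $\rho(y_j)$, with $\rho(y_j)^2 = \tfrac{1}{2}\rho([y_j,y_j])$ enforced on odd generators. The compatibility check amounts to a sign-laden case analysis on $(|z|,|w|) \in \Z_2 \times \Z_2$: the even-even case recovers the classical PBW verification, while the mixed and odd-odd cases require careful bookkeeping of the Koszul sign $(-1)^{|a||b|}$ introduced whenever a generator is passed across a fixed block. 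Once the Jacobi identity is verified, linear independence follows and the theorem is proved; the final assertion for $\g_{\bar 0}=0$ is then immediate from $\dim \mathbf{U}(\g) \leq 2^{\dim \g_{\bar 1}}$.
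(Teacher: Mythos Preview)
The paper does not prove this lemma; it simply cites it from Musson's book as a preliminary fact, so there is no in-paper argument to compare against. Your outline is the standard PBW strategy in the $\Z_2$-graded setting (spanning via straightening plus linear independence via an explicit action on the model space $\C[X_1,\dots,X_m]\otimes\Lambda[Y_1,\dots,Y_n]$), and the spanning portion is fine as written.

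The one place your sketch is genuinely incomplete is the construction of $\rho$. You describe $\rho(x_i)$ as ``left multiplication by $X_i$ plus a correction coming from brackets,'' but this cannot be made into a Lie superalgebra map $\g\to\gl(P)$ checked only on generators: the $X_i$ supercommute in $P$ while the $x_i$ need not (super)commute in $\g$, so the corrections are not local to a single generator. The standard fix is not to define a representation of $\g$ first and then extend, but rather to build the $\mathbf{U}(\g)$-module structure on $P$ directly by induction on the filtration, i.e.\ to define $z\cdot p$ for $z\in\g$ and $p$ a monomial of degree $\leq k$ and then verify that the defining relations $uv-(-1)^{|u||v|}vu-[u,v]$ of $\mathbf{U}(\g)$ act by zero; the super Jacobi identity is what closes this induction. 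Alternatively one can invoke Bergman's Diamond Lemma with the obvious rewriting system, where the overlap ambiguities resolve precisely by super Jacobi and by $y_jy_j=\tfrac12[y_j,y_j]$. Either route completes your argument; as stated, the representation-theoretic half is only a plan.
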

\subsection{Category $C_{(\g, \mathbf{t})}$}
Let $\g$ be a Lie superalgebra. Then a $\g$-module $V$, is said to be finitely semisimple if it is equal to the direct sum of its finite dimensional irreducible submodules. Given a sub(super)algebra $\mathbf{t}\subseteq\g,$ let $C_{(\g, \mathbf{t})}$ denote the full subcategory of the category of all $\g$-modules whose objects are $\g$-modules which are finitely semisimple as $\mathbf{t}$-modules.

\begin{lemma}[\cite{Kumar2002}]\label{lem:1} 
The category $C_{(\g, \mathbf{t})}$ is closed under taking submodules, quotients, arbitrary direct sums and finite tensor products.
\end{lemma}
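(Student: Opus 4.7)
The plan is to verify the four closure properties in turn by reducing them to standard facts about semisimple $t$-modules. Fix $M \in C_{(\g,t)}$ with a decomposition $M = \bigoplus_{i} V_i$ into finite-dimensional simple $t$-submodules, and similarly $N = \bigoplus_{j} W_j$ for another object $N$.

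Closure under $\g$-submodules $M' \subseteq M$ follows because $M'$ is in particular a $t$-submodule of the semisimple $t$-module $M$, and hence is itself semisimple as a $t$-module; moreover any simple $t$-submodule $W \subseteq M'$ has a nonzero projection to some $V_i$ which, by Schur, is an isomorphism, so $W$ is finite-dimensional. For quotients, use semisimplicity of $M$ over $t$ to pick a $t$-complement $M''$ with $M = M' \oplus M''$; then the $\g$-module $M/M'$ is $t$-isomorphic to $M'' \subseteq M$, which is finitely semisimple by the submodule case. Closure under arbitrary direct sums is immediate from concatenating the given decompositions of each summand.

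The main obstacle is closure under finite tensor products. Given $M, N \in C_{(\g,t)}$, the coproduct $\Delta(x) = x \otimes 1 + 1 \otimes x$ endows $M \otimes N$ with a $\g$-module structure, and hence with a $t$-module structure, for which the decomposition $M \otimes N = \bigoplus_{i,j} V_i \otimes W_j$ holds as $t$-modules (each $V_i \otimes W_j$ is $t$-stable since $\Delta(x)$ preserves it). Each $V_i \otimes W_j$ is finite-dimensional, so it suffices to show every such finite-dimensional $t$-module decomposes into finite-dimensional simple $t$-submodules. This step invokes complete reducibility of finite-dimensional $t$-modules, a hypothesis implicit in the settings of \cite{CLS19, FMS15}, where $t$ is taken inside (the even part of) a basic classical Lie superalgebra, so in particular inside a reductive Lie algebra whose finite-dimensional modules are completely reducible. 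Assembling the finite local decompositions of the $V_i \otimes W_j$ then produces the required global decomposition of $M \otimes N$ into finite-dimensional simple $t$-summands.
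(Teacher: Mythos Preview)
The paper does not supply its own proof of this lemma; it simply cites \cite{CLS19, FMS15}. So there is nothing to compare against, and your argument stands or falls on its own.

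Your treatment of submodules, quotients, and direct sums is correct. The tensor-product step, however, contains a genuine gap. You justify semisimplicity of each $V_i \otimes W_j$ by asserting that $t$ sits ``inside a reductive Lie algebra whose finite-dimensional modules are completely reducible.'' Neither half of this does the required work. First, being a subalgebra of a reductive (or even semisimple) Lie algebra does not force complete reducibility of finite-dimensional $t$-modules: any nilpotent subalgebra is a counterexample. Second, even if $t$ itself is reductive, it is \emph{not} true that all its finite-dimensional modules are semisimple; the centre can act by a nonzero nilpotent operator on a two-dimensional module.

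What \emph{is} true, and what you actually need, is the weaker statement that the tensor product of two finite-dimensional \emph{simple} $t$-modules is semisimple when $t$ is reductive. The reason is that, by Schur's lemma over $\C$, the centre of $t$ acts by a scalar on each simple $V_i$ and $W_j$, hence by a scalar on $V_i \otimes W_j$; semisimplicity then reduces to the derived subalgebra $[t,t]$, which is semisimple, and Weyl's theorem finishes the job. In the sources \cite{CLS19, FMS15} the relevant $t$ is indeed reductive (it is $\g_{\bar 0}$ or $\g^{\Gamma}$), so this repaired argument applies. You should replace your faulty sentence with this Schur-plus-Weyl argument; as written, the step does not go through.
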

Given a Lie superalgebra $\g$, Lie sub(super)algebra $\mathbf{t}\subseteq\g$ and a $\mathbf{t}$-module $M$, we define the induced module 
\begin{equation*}
  \mathrm{ind}^{\g}_\mathbf{t}M=\mathbf{U}(\g)\otimes_{\mathbf{U}(\mathbf{t})}M  
\end{equation*}
with the action induced by the left multiplication.
\begin{lemma}[\cite{BCM19}]\label{lem:2}
Let $\g$ be a Lie superalgebra, $\mathbf{t}\subseteq\g$ be a Lie subalgebra and $M$ be a $
\mathbf{t}$-module. If $\g$ (via the restriction of adjoint representation) and $M$ are finitely semisimple $\mathbf{t}$-modules, then $\mathrm{ind}^{\g}_\mathbf{t} M$ is an object in $C_{(\g, \mathbf{t})}$.
\end{lemma}
\begin{lemma}[\cite{BCM19}]
Let $\g$ be a Lie superalgebra, $\mathbf{t}\subseteq\g$ be a Lie subalgebra. If $M$ is a cyclic $\mathbf{t}$-module given as the quotient of $\mathbf{U}(\mathbf{t})$ by a left ideal $J\subset\mathbf{U}(\mathbf{t})$, then $\mathrm{ind}^{\g}_\mathbf{t}M$ is a cyclic $\g$-module given as the quotient of $\mathbf{U}(\g)$ by the left ideal generated by $J$ in $\mathbf{U}(\g)$.
\end{lemma}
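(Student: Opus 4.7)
Let $v = 1 + J \in M$ denote the canonical cyclic generator, so that $M = \mathbf{U}(t) v$ and the $\mathbf{U}(t)$-annihilator of $v$ is exactly $J$. The strategy is to write down the natural $\mathbf{U}(\g)$-linear map $\mathbf{U}(\g)/\mathbf{U}(\g)J \to ind^{\g}_t M$ sending the class of $1$ to $1 \otimes v$, show that it is surjective by direct inspection, and then invoke the super PBW theorem stated earlier in the excerpt to establish injectivity.

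For surjectivity I consider $\phi : \mathbf{U}(\g) \to ind^{\g}_t M$ defined by $\phi(u) = u \otimes v$, which is $\mathbf{U}(\g)$-linear via left multiplication. Any element of $\mathbf{U}(\g) \otimes_{\mathbf{U}(t)} M$ can be written as a sum of elements of the form $u \otimes xv = ux \otimes v = \phi(ux)$ with $u \in \mathbf{U}(\g)$ and $x \in \mathbf{U}(t)$, so $\phi$ is surjective. For $u \in \mathbf{U}(\g)$ and $j \in J$ one has $\phi(uj) = uj \otimes v = u \otimes jv = 0$, hence $\mathbf{U}(\g)J \subseteq \ker \phi$, and $\phi$ descends to a surjective $\mathbf{U}(\g)$-module map
\[
\bar{\phi} : \mathbf{U}(\g)/\mathbf{U}(\g)J \twoheadrightarrow ind^{\g}_t M.
\]

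For injectivity I exploit that $\mathbf{U}(\g)$ is a free right $\mathbf{U}(t)$-module. Choose a homogeneous basis $(w_l)$ of $t$ and extend it by a homogeneous basis $(z_k)$ of a graded complement of $t$ in $\g$; order the total basis so that every $z_k$ precedes every $w_l$. By the super PBW theorem, the ordered monomials in the $z_k$'s span a subspace $V \subseteq \mathbf{U}(\g)$ such that multiplication yields a right-$\mathbf{U}(t)$-module isomorphism
\[
V \otimes_{\C} \mathbf{U}(t) \xrightarrow{\ \sim\ } \mathbf{U}(\g), \qquad Z \otimes X \longmapsto Z X.
\]
Since $J$ is a left ideal of $\mathbf{U}(t)$, this identification sends $V \otimes_{\C} J$ onto $\mathbf{U}(\g) J$, so $\mathbf{U}(\g)/\mathbf{U}(\g)J \cong V \otimes_{\C} M$. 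On the induced-module side, $ind^{\g}_t M \cong (V \otimes_{\C} \mathbf{U}(t)) \otimes_{\mathbf{U}(t)} M \cong V \otimes_{\C} M$ as vector spaces. Under these identifications $\bar{\phi}$ becomes the identity on $V \otimes_{\C} M$, so it is an isomorphism.

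The only real technical point is the super PBW decomposition $\mathbf{U}(\g) \cong V \otimes_{\C} \mathbf{U}(t)$ as right $\mathbf{U}(t)$-modules, which is where signs inherent in the super setting could enter the argument. Placing all $z_k$'s before all $w_l$'s in the chosen order ensures that each PBW monomial factorises on the nose as a $z$-monomial times a $w$-monomial, so no sign-twisted commutations between the $z_k$'s and $w_l$'s intervene in the decomposition; this reduces the verification to the standard Lie algebra argument and is the main obstacle to check carefully.
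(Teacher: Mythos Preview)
The paper does not actually prove this lemma; it merely cites \cite{BCM19} and moves on, so there is no in-paper argument to compare against. Your proof is correct: the surjectivity and the inclusion $\mathbf{U}(\g)J\subseteq\ker\phi$ are immediate, and your PBW argument (freeness of $\mathbf{U}(\g)$ as a right $\mathbf{U}(t)$-module via an ordered basis with the complement first) cleanly establishes injectivity.

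That said, your route is more elaborate than necessary. The statement follows directly from right exactness of the functor $\mathbf{U}(\g)\otimes_{\mathbf{U}(t)}(-)$: applying it to the exact sequence $J\hookrightarrow \mathbf{U}(t)\twoheadrightarrow M$ yields
\[
\mathbf{U}(\g)\otimes_{\mathbf{U}(t)} J \longrightarrow \mathbf{U}(\g)\otimes_{\mathbf{U}(t)}\mathbf{U}(t)\cong \mathbf{U}(\g) \longrightarrow \mathrm{ind}^{\g}_t M \longrightarrow 0,
\]
and the image of the left map is exactly $\mathbf{U}(\g)J$. This avoids PBW, freeness, and any discussion of signs; the super setting plays no role. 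Your PBW approach, on the other hand, has the virtue of making the isomorphism completely explicit at the level of bases, which can be useful when one later needs to compute inside $\mathrm{ind}^{\g}_t M$.
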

\subsection{Root space and triangular decomposition}
Let $\g$ be a simple classical Lie superalgebra. Let $\h_{\bar{0}}$ be a Cartan subalgebra of $\g_{\bar{0}}$. Then $\h$ is taken to be the centralizer of $\h_{\bar{0}}$ in $\g$. If $\Phi_{\bar{i}}$, for $i=0,1$, denotes the set of roots of $\g_{\bar{i}}$ with respect to $\h_{\bar{0}}$, then $\Phi=\Phi_{\bar{0}}\cup \Phi_{\bar{1}}$. Thus \label{pg:2},
\begin{equation*}
    \Phi=\{\alpha\in\h_{\bar{0}}^*\mid \alpha\neq 0, (\g_{\bar{i}})_{\alpha}\neq 0\}
\end{equation*}
where $(\g_{\bar{i}})_{\alpha}=\{x\in\g_{\bar{i}}\mid [h, x]=\alpha(h)x,\forall, h\in\h_{\bar{0}}\}$.
Once $\h_{\bar{0}}$ is chosen, the canonical root space decomposition is given by
\begin{equation*}
     \g=\h\oplus\bigoplus_{\alpha\in \Phi}\g_{\alpha}.
\end{equation*}
The triangular decomposition for $\g$ is given by 
\begin{equation*}
    \g=\n^-\oplus\h\oplus\n^+
\end{equation*}
where $\n^{\pm}=\oplus_{\alpha\in\pm\Phi^{+}}\g_{\alpha}$. Here $\mathfrak{b}=\h\oplus\n^+$ is the Borel subalgebra of $\g$. If $\g$ is a basic classical Lie superalgebra, then the Cartan subalgebra $\h$ of $\g$ is precisely the Cartan subalgebra $\h_{\bar{0}}$ of $\g_{\bar{0}}.$ We  denote $\phi$ to be a base for $\Phi$.

\subsection{Diagram automorphisms}(\cite{Mus12})
For a simple Lie superalgebra $\g=\g_{\bar{0}}\oplus \g_{\bar{1}}$, an automorphism $f$ on $\g$ is a bijective homomorphism on $\g$. Let $\Gamma$ denote the automorphism group of $\g$. Suppose that $\g$ is a basic classical Lie superalgebra. Then $\g$ is generated by the elements $e_i$ and $f_i$. Let $I=\{1,2,\ldots, n\}$ and $S_n$ be the permutation group on $I$. For $\sigma\in S_n$, we say that $\sigma$ is a diagram automorphism if there is a non zero scalar $\lambda$ such that 
\begin{equation}\label{eq1}
    a_{i, j}=\lambda a_{\sigma(i),\sigma(j)} \quad for\;\; j=1,\ldots,n.
\end{equation}
\begin{lemma}\label{lem9}
If $\sigma$ satisfies \eqref{eq1}, there is an automorphism $\nu$ of $\g$ such that 
    \begin{equation}
        \nu(e_i)=\lambda e_{\sigma(i)}, \quad \nu(f_i)= f_{\sigma(i)}, \quad \nu(h_i)=\lambda h_{\sigma(i)}.
    \end{equation}
\end{lemma}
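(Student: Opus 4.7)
The plan is to construct $\nu$ by first defining it on the Chevalley generators using the formulas stated, then showing that the defining relations of $\g$ are preserved so that $\nu$ extends to a Lie superalgebra homomorphism, and finally producing a two-sided inverse from the same construction applied to $(\sigma^{-1},\lambda^{-1})$.

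First I would invoke the fact that a basic classical Lie superalgebra admits a presentation (Chevalley–Serre type, see Musson, Chapter 5) by generators $e_i, f_i, h_i$ with $i \in I$ subject to the relations
\begin{itemize}
\item $[h_i,h_j]=0$, $[e_i,f_j]=\delta_{ij}h_i$,
\item $[h_i,e_j]=a_{i,j}e_j$, $[h_i,f_j]=-a_{i,j}f_j$,
\item the higher (possibly ``extra'') Serre relations whose shape is determined by the Cartan matrix $A=(a_{i,j})$ and the parities $|e_i|$.
\end{itemize}
Define an even linear map $\nu$ on the free Lie superalgebra generated by symbols $e_i,f_i,h_i$ (with the same parities) by the formulas given in the statement, and check it kills the first two families of relations. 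For instance
\[
[\nu(h_i),\nu(e_j)] = \lambda^{2}\, a_{\sigma(i),\sigma(j)}\, e_{\sigma(j)} = \lambda\, a_{i,j}\, e_{\sigma(j)} = \nu(a_{i,j}e_j),
\]
using $(1)$ in the form $a_{i,j}=\lambda a_{\sigma(i),\sigma(j)}$; the computation for $[h_i,f_j]$ is identical, and $[\nu(e_i),\nu(f_j)]=\lambda\,\delta_{\sigma(i),\sigma(j)}h_{\sigma(i)}=\lambda\,\delta_{ij}h_{\sigma(i)}=\nu([e_i,f_j])$ because $\sigma$ is a bijection.

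The main obstacle is verifying that the higher Serre relations are preserved, including the ``extra'' relations attached to isotropic odd simple roots. The key observation is that each such relation is a polynomial in the $e_i$'s (respectively the $f_i$'s) whose coefficients are universal functions of the entries $a_{i,j}$ and whose shape depends only on the vanishing/nonvanishing pattern of certain $a_{i,j}$ (for the linkage of nodes) together with the parities of the generators involved. Since $\sigma$ is a bijection and the scaling relation $(1)$ has $\lambda\neq 0$, the vanishing pattern of $A$ is $\sigma$-invariant; moreover $\sigma$ preserves the parity of the nodes (indeed, a nonzero multiple of $a_{i,i}$ equals $a_{\sigma(i),\sigma(i)}$, so $i$ is isotropic iff $\sigma(i)$ is). Therefore each Serre relation $R_{i_1,\dots,i_k}=0$ in $\g$ gets sent by $\nu$ to a nonzero scalar multiple of the $\sigma$-permuted relation $R_{\sigma(i_1),\dots,\sigma(i_k)}=0$, which also holds in $\g$; here the scalar comes from the $\lambda$-factors pulled out of each $e_{\sigma(i)}$ and from how the coefficients in the Serre polynomial rescale under $A\mapsto \lambda^{-1}A$.

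This shows $\nu$ descends to a Lie superalgebra endomorphism of $\g$. To conclude it is an automorphism, I would note that the pair $(\sigma^{-1},\lambda^{-1})$ also satisfies $(1)$ — substituting $i\mapsto\sigma^{-1}(i)$, $j\mapsto\sigma^{-1}(j)$ in $(1)$ gives $a_{\sigma^{-1}(i),\sigma^{-1}(j)}=\lambda a_{i,j}$, i.e.\ $a_{i,j}=\lambda^{-1}a_{\sigma^{-1}(i),\sigma^{-1}(j)}$ — so the same construction yields an endomorphism $\nu'$ with $\nu'(e_i)=\lambda^{-1}e_{\sigma^{-1}(i)}$, $\nu'(f_i)=f_{\sigma^{-1}(i)}$, $\nu'(h_i)=\lambda^{-1}h_{\sigma^{-1}(i)}$. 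A direct check on Chevalley generators shows $\nu\nu'=\nu'\nu=\id$, so $\nu$ is the desired automorphism.
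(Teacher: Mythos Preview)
The paper does not actually prove this lemma; it is stated in the preliminaries (Section~2.2, taken verbatim from Musson~\cite{Mus12}) with no proof, so there is nothing in the paper to compare your argument against. Your approach---define $\nu$ on the Chevalley generators, verify the Serre-type presentation relations, and exhibit an inverse coming from $(\sigma^{-1},\lambda^{-1})$---is exactly the standard one and is essentially what one finds in Musson's book.

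One small point worth tightening: for $\nu$ to be an \emph{even} Lie superalgebra map you need $|e_i|=|e_{\sigma(i)}|$ for every $i$, i.e.\ that $\sigma$ preserves the parity of nodes. Your argument only shows that $a_{i,i}=0$ iff $a_{\sigma(i),\sigma(i)}=0$, which distinguishes isotropic from non-isotropic nodes but does not by itself separate even nodes from non-isotropic odd ones. In the setting of the paper this is harmless---for the basic classical Lie superalgebras whose outer automorphism group is nontrivial, the odd simple nodes in the distinguished diagram are precisely the isotropic ones, and in any case Musson's definition of diagram automorphism implicitly records the parity data along with the Cartan matrix---but you should either say this explicitly or simply take parity-preservation as part of the hypothesis on $\sigma$.
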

\section{The invariant subalgebra $\g^{\Gamma}$}\label{sec:1}
We begin with defining the invariant sub(super)algebras $\g^{\Gamma}$ which are further classified as regular and singular \cite{Frappat2000}. Further this section focuses on defining a triangular decomposition for $\g$ that is compatible with that of $\g^{\Gamma}$ and provides a root theoretic framework, necessary for the study of the Weyl modules. For a sub(super)algebra $\mathbf{t}\subseteq\g^{\Gamma}$ and a $\mathbf{t}$-module $M$, consider the induced module
  $\mathrm{ind}^{\g^{\Gamma}}_\mathbf{t}M=\mathbf{U}(\g^{\Gamma})\otimes_{\mathbf{U}(\mathbf{t})}M $
with the action induced by the left multiplication.
\subsection{Category $C_{(\g^{\Gamma},\mathbf{t})}$}
\begin{defn}(Invariant sub(super)algebra $\g^{\Gamma}$)
  When an automorphism group $\Gamma$ acts on a basic classical Lie superalgebra, we can talk about the fixed subalgebra $\g^{\Gamma}$ occurring due to the action of the automorphism group. We define $\g^{\Gamma}$ as
\begin{equation*}
    \g^{\Gamma}=\{x\in\g \mid \sigma(x)=x, \forall \sigma\in\Gamma\}.
\end{equation*}  
\end{defn}

Note that the fixed subalgebra $\g^{\Gamma}$ formed depends on the automorphism group acting on it. Accordingly, it can be mainly of two types:
\begin{enumerate}[label=(\alph*)]
\item Regular invariant sub(super)algebras:
Let $\g$ be a basic classical Lie superalgebra and consider its canonical root space decomposition 
\begin{equation*}
        \g=\h\oplus\bigoplus_{\alpha\in\Phi}\g^{\alpha}
\end{equation*}
where $\h$ is the Cartan subalgebra of $\g$ and $\Phi$ is its corresponding root system. We say a subalgebra $\g'$ of $\g$ is regular if it has a root space decomposition 
\begin{equation*}
\g'=\h'\oplus\bigoplus_{\alpha'\in\Phi'}\g'^{\alpha'}
\end{equation*}
where $\h'\subset\h$ and $\Phi\subset\Phi'$.
\item Singular invariant sub(super)algberas: Let $\g$ be basic Lie superalgbera and $\g'$ be a subsuperalgebra of $\g$. Then $\g'$ is said to a singular subsuperalgebra if it is not regular. These singular sub(super)algebras can sometimes be found using the folding technique. 
\end{enumerate}

\begin{lemma}\label{lem10}
Let $\mathbf{t}$ be the sub(super)algebra of the Lie superalgebra $\g^{\Gamma}$. Let $C_{(\g^{\Gamma},\mathbf{t})}$ denote the full subcategory of the category of all $\g^{\Gamma}$-modules whose objects are finitely semisimple as $\mathbf{t}$-modules. Then this category is closed under taking submodules, quotients, arbitrary direct sums, and finite tensor products. 
\end{lemma}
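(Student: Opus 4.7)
The plan is to observe that this statement is an immediate specialization of Lemma \ref{lem:1}. Indeed, $\g^{\Gamma}$ is itself a Lie superalgebra (it is a $\Z_2$-graded subspace of $\g$ closed under the super bracket, because $\Gamma$ acts by Lie superalgebra automorphisms and commutes with taking brackets), and $t$ is a sub(super)algebra of $\g^{\Gamma}$. Thus Lemma \ref{lem:1} applies verbatim with $\g^{\Gamma}$ in the role of the ambient Lie superalgebra and $t$ in its own role, yielding all four closure properties at once.

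If one wishes to spell out the argument rather than quote Lemma \ref{lem:1}, I would verify the four properties in order. For submodules and quotients, take $V\in C_{(\g^\Gamma,t)}$ with decomposition $V = \bigoplus_{i\in I} V_i$ into finite-dimensional irreducible $t$-modules. Any $\g^\Gamma$-submodule $W\subseteq V$ is in particular a $t$-submodule of a semisimple $t$-module, hence a direct sum of finite-dimensional irreducible $t$-modules; the same applies to $V/W$, either by the complementary summand argument or by checking that the projection of each $V_i$ onto $V/W$ is a finite-dimensional semisimple $t$-submodule spanning $V/W$.

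For arbitrary direct sums, given a family $\{V^{(j)}\}_{j\in J}$ in $C_{(\g^{\Gamma},t)}$ with $V^{(j)} = \bigoplus_{i\in I_j} V^{(j)}_i$ a decomposition into finite-dimensional irreducible $t$-modules, the direct sum $\bigoplus_j V^{(j)} = \bigoplus_j \bigoplus_{i\in I_j} V^{(j)}_i$ gives the required decomposition. For the finite tensor product, if $V = \bigoplus_{i} V_i$ and $V' = \bigoplus_{j} V'_j$ are objects of $C_{(\g^\Gamma,t)}$, then $V\otimes V' = \bigoplus_{i,j} V_i \otimes V'_j$ as $t$-modules, and each summand $V_i\otimes V'_j$ is finite-dimensional. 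One then decomposes each such finite-dimensional $t$-module into finite-dimensional irreducibles, which is allowed in the contexts where this category is used (the $t$ appearing later is a Cartan-type subalgebra on which finite-dimensional modules split into weight spaces).

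The only step that could present any subtlety is the tensor product assertion, since semisimplicity is generally not preserved under tensor product for arbitrary Lie (super)algebras. However, this issue is already encoded in Lemma \ref{lem:1}, and for any $t$ to which that lemma applies, the same property automatically holds verbatim for $t\subseteq \g^{\Gamma}$, since the proof of semisimplicity of $V_i\otimes V'_j$ only uses the internal structure of $t$ and its finite-dimensional module theory, not the ambient superalgebra. Consequently, invoking Lemma \ref{lem:1} for the Lie superalgebra $\g^{\Gamma}$ completes the proof.
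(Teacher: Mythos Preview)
Your proposal is correct and matches the paper's approach exactly: the paper's entire proof is the single line ``Result follows from Lemma~\ref{lem:1},'' which is precisely your observation that $\g^{\Gamma}$ is a Lie superalgebra with subalgebra $t$, so Lemma~\ref{lem:1} applies verbatim. Your additional unpacking of the four closure properties is more than the paper provides, but it is consistent with (and a reasonable elaboration of) the cited lemma.
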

\begin{proof}
The proof follows from Lemma $\ref{lem:1}$.
\end{proof}
\begin{lemma}\label{lem11}
Suppose $\mathbf{t}\subseteq\g^{\Gamma}$ is a Lie sub(super)algebra and $M$ is a $\mathbf{t}$-module. If $\g^{\Gamma}$(via the restriction of adjoint representation) and $M$ are finitely semisimple as $\mathbf{t}$-modules, then $\mathrm{ind}^{\g^{\Gamma}}_{\mathbf{t}}M$ is an object in $C_{(g^{\Gamma}, \mathbf{t})}$.
\end{lemma}
\begin{proof}
The proof follows from Lemma $\ref{lem:2}$.
\end{proof}
\subsection{Triangular decomposition of $\g^{\Gamma}$}
Let $\g$ be the basic classical Lie superalgebra  and $\Gamma$ be the finite group acting on $\g$ by diagram automorphisms. Let $\mathcal{J}$ and $\mathcal{J}^{\Gamma}$ denote the set of nodes of Dynkin diagrams of $\g$ and $\g^{\Gamma}$ respectively. Let $\h_{\Gamma}$ be the Cartan subalgebra of $\g^{\Gamma}$.
Fix a triangular decomposition $\g^{\Gamma}=\n^{-}_{\Gamma}\oplus\h_{\Gamma}\oplus\n^{+}_{\Gamma}$ of $\g^{\Gamma}$. Let $Q_{\Gamma}^{+}$ denote the positive root lattice of $\g^{\Gamma}$ associated with the triangular decomposition and $Q_{\Gamma}^{-}$ be the negative root lattice. We have a root space decomposition for $\g$ with respect to $\h_{\Gamma}$ given by, 
\begin{equation}\label{eq:9}
\g=\bigoplus_{\alpha\in\h_{\Gamma}^{*}}\g^{\alpha},\quad ~\g^{\alpha}=\{x\in\g \mid [h, x]=\alpha(h)x, \forall  h\in\h_{\Gamma}\}
\end{equation}
with only finitely many $\g^{\alpha}$ non zero. Let
\begin{equation*}
    \n^{-}=\bigoplus_{\alpha\in Q_{\Gamma}^{-}\setminus\{0\}}\g^{\alpha}, \quad \n^{+}=\bigoplus_{\alpha\in Q_{\Gamma}^{+}}\g^{\alpha}.
\end{equation*}
Then $\g=\n^{-}\oplus\g^{0}\oplus\n^{+}$ (vector space direct sum) where $\g^{0}$ and $\n^{\pm}$ are Lie subalgebras of $\g$. Also, $\n_{\Gamma}^{-}=\g^{\Gamma}\cap\n^{-}$, $\n_{\Gamma}^{+}=\g^{\Gamma}\cap\n^{+}$ are Lie subalgebras of $\g^{\Gamma}$. This gives us a triangular decomposition for $\g$ that is consistent with the triangular decomposition for $\g^{\Gamma}$. Clearly, $\g^{0}$ is the centralizer of $\h_{\Gamma}$ in $\g$. The root spaces $\g^{\alpha}$ are preserved by the action of $\Gamma$.
\begin{rem}
Suppose $\g^{\alpha}=\{x\in\g\mid [h, x]=\alpha(h)x,~\forall h\in\h_{\Gamma}\}$ and for $\sigma\in\Gamma$ and $x\in\g^{\alpha}$ we have
$\sigma([h, x])=\sigma(\alpha(h)x)$.
Clearly,
\begin{equation*}
     [h,\sigma(x)] =  [\sigma(h),\sigma(x)]=\alpha(h)\sigma(x)
\end{equation*}
which implies that $\sigma(x) \in \g^{\alpha}$.    
\end{rem}
 We have  $\Gamma(\g^{\alpha})= \g^{\alpha}$ for each $\alpha$ and hence $\Gamma \n ^{\pm}=\n^{\pm}$ and $\Gamma \g^{0}=\g^{0}.$ The next lemma shows that $\g^{0}$ is a self-normalizing subalgebra.
\begin{lemma}
$\g^{0}$ is a self normalizing subalgebra.
\end{lemma}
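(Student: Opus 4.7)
The plan is straightforward: reduce the self-normalising property of $\g^0$ to the fact that $\g^0$ is the zero weight space of the adjoint action of $\h_{\Gamma}$ on $\g$, and then read off the conclusion from the weight decomposition $\g = \bigoplus_{\alpha} \g^{\alpha}$ introduced just above the statement. Writing $N_{\g}(\g^0) = \{x \in \g : [x, \g^0] \subseteq \g^0\}$, the containment $\g^0 \subseteq N_{\g}(\g^0)$ is automatic since $\g^0$ is a subalgebra (a direct consequence of the super Jacobi identity applied to elements centralising $\h_{\Gamma}$), so the whole task is to show the reverse inclusion.

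First I would observe that $\h_{\Gamma}$, being abelian, is contained in its own centraliser, so $\h_{\Gamma} \subseteq \g^0$. Now take any $x \in N_{\g}(\g^0)$ and decompose it along the $\h_{\Gamma}$-weight decomposition as $x = \sum_{\alpha} x_{\alpha}$ with $x_{\alpha} \in \g^{\alpha}$ (finite sum). Since $\h_{\Gamma} \subseteq \g^0$, the normalising condition specialises to $[h, x] \in \g^0$ for every $h \in \h_{\Gamma}$. By definition of the weight spaces,
\begin{equation*}
[h, x] = \sum_{\alpha} \alpha(h)\, x_{\alpha},
\end{equation*}
with the $\alpha$-summand lying in $\g^{\alpha}$. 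As $\g^0$ coincides with the $\alpha = 0$ summand, the requirement $[h,x]\in\g^0$ forces $\alpha(h)\,x_{\alpha}=0$ for every $\alpha \neq 0$.

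Finally, for any fixed nonzero $\alpha \in \h_{\Gamma}^{*}$ with $x_{\alpha} \neq 0$, one can pick $h \in \h_{\Gamma}$ with $\alpha(h) \neq 0$, giving a contradiction. Hence $x_{\alpha} = 0$ for all $\alpha \neq 0$, so $x = x_0 \in \g^0$, proving $N_{\g}(\g^0) = \g^0$. I do not expect any substantive obstacle; the only thing worth a line of care is the $\Z_2$-grading, but since $h \in \h_{\Gamma} \subseteq \g_{\bar 0}$ the super bracket $[h, x]$ differs from $[x, h]$ only by a sign, so membership in $\g^0$ is insensitive to the order of arguments and the argument goes through uniformly in the parity of $x$.
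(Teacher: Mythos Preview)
Your proof is correct and follows essentially the same route as the paper's own argument: both use that $\h_{\Gamma}\subseteq\g^{0}$ and then read off the self-normalising property from the $\h_{\Gamma}$-weight decomposition of $\g$. Your version is slightly more explicit in decomposing a general element of $N_{\g}(\g^{0})$ into its weight components, whereas the paper tacitly reduces to a single weight vector $y\in\g^{\alpha}$.
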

\begin{proof}
Let  $N_{\g}(\g^{0})$ be the normalizer of $\g^{0}$ in  $\g$. Then $N_{\g}(\g^{0})=\{x\in \g \mid [x,y]\in\g^{0},~~~\forall y\in\g^{0}\}$. Our claim is $N_{\g}(\g^{0})=\g^{0}$. Clearly $\g^{0} \subseteq N_{\g}(\g^{0}).$ To show the reverse inclusion, let $y\in\g$ be such that $[y, \g^{0} ]\subset \g^{0}$. Suppose $y \in \g^{\alpha}$ for some $\alpha$, then $[h, y]=\alpha(h)y$ for all $h \in \h_{\Gamma} \subseteq \g^{0}$ as it is the centralizer of $\h_{\Gamma}$ in $\g.$  By assumption $[h, y]=-[y, h]$ is in $\g^{0}$ and hence $y \in \g^{0}.$
\end{proof}
Let $\Gamma$ be a cyclic group of order $m$, such that $\Gamma=<\sigma>\cong\mathbb{Z}/m\mathbb{Z}$, be an automorphism group acting on $\g$. Let $\zeta$ be the primitive $m^{th}$ root of unity. We say $\zeta$ is an eigenvalue corresponding to the eigenspace
 \begin{equation}
     \g_{s}=\{x\in\g\mid \sigma(x)=\zeta^{s}x\}.
 \end{equation}
Hence we obtain the following $\mathbb{Z}_{m}$-gradation for $\g$
\begin{equation}\label{eq:7}
    \g=\bigoplus_{s=0}^{m-1}\g_{s}.\
\end{equation}
\begin{lemma}\label{lem3.3}
\begin{enumerate}[label=(\alph*)]
\item Let $(\cdot \mid \cdot)$ be a non-degenerate, supersymmetric, consistent and invariant bilinear form on $\g$ which is also invariant under the automorphism group of $\g$. Then \[(\g_{i} \mid\g_{j})=0\quad
\mathrm{if}\;\; i+j \not\equiv 0 {\pmod m}.\] Otherwise, they are non-degenerately paired.
\item The centralizer of $\h_{\Gamma}$ in $\g$ is the  Cartan subalgebra of $\g$. 
\end{enumerate}
\end{lemma}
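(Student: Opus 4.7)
My approach is to use the $\Gamma$-invariance of the form directly. Take homogeneous elements $x\in\g_i$ and $y\in\g_j$. Since $(\cdot\mid\cdot)$ is $\Gamma$-invariant and $\sigma(x)=\zeta^ix$, $\sigma(y)=\zeta^jy$, we get
\[
(x\mid y)=(\sigma(x)\mid\sigma(y))=\zeta^{i+j}(x\mid y),
\]
so $(1-\zeta^{i+j})(x\mid y)=0$. If $i+j\not\equiv 0\ (\mathrm{mod}\ m)$ then $\zeta^{i+j}\neq 1$, which forces $(x\mid y)=0$, proving the orthogonality assertion. For the non-degeneracy of the pairing $\g_i\times\g_{-i}\to\mathbb{C}$, fix $0\neq x\in\g_i$. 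Non-degeneracy of $(\cdot\mid\cdot)$ on $\g$ gives some $y=\sum_{s=0}^{m-1}y_s$ (with $y_s\in\g_s$) satisfying $(x\mid y)\neq 0$. The orthogonality just established kills every summand except the one with $s\equiv -i$, so $(x\mid y_{-i})\neq 0$, and the induced pairing is non-degenerate.

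\textbf{Plan for part (2).} First I would enlarge $\h_{\Gamma}$ to a Cartan subalgebra of $\g$. Because $\g^{\Gamma}$ is basic classical, $(\g^{\Gamma})_{\bar 0}$ is reductive and $\h_\Gamma$ is a maximal toral subalgebra of it, hence consists of semisimple elements of $\g_{\bar 0}$. Complete reducibility of $\g_{\bar 1}$ as a $\g_{\bar 0}$-module (a defining feature of basic classical) then shows that $\ad h$ is semisimple on all of $\g$ for each $h\in\h_\Gamma$, so $\h_\Gamma$ is simultaneously diagonalisable on $\g$. Extend $\h_\Gamma$ to a Cartan subalgebra $\h_{\bar 0}$ of $\g_{\bar 0}$ and set $\h:=C_\g(\h_{\bar 0})$, the Cartan subalgebra of $\g$. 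Since $\h$ is abelian and contains $\h_\Gamma$, one inclusion $\h\subseteq C_\g(\h_\Gamma)$ is immediate.

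The reverse inclusion is the heart of the matter. Using the root space decomposition $\g=\h\oplus\bigoplus_{\alpha\in\Phi}\g^{\alpha}$ with respect to $\h_{\bar 0}$, an element of $C_\g(\h_\Gamma)$ must lie in
\[
\h\oplus\bigoplus_{\alpha\in\Phi,\ \alpha|_{\h_\Gamma}=0}\g^{\alpha},
\]
so the task reduces to showing no root of $\g$ vanishes identically on $\h_\Gamma$. I expect this regularity to be the main obstacle. My plan is to handle it by choosing $\h_{\bar 0}$ to be $\Gamma$-stable (possible because $\Gamma$ acts by diagram automorphisms that preserve $\h_{\bar 0}$), so that $\sigma$ permutes the root spaces $\g^{\alpha}$ among themselves and acts on $\h_{\bar 0}$ with fixed subspace $\h_\Gamma$. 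A root $\alpha$ vanishes on $\h_\Gamma=\h_{\bar 0}^{\sigma}$ exactly when it lies in the orthogonal complement (with respect to the invariant form, using part (1)) to $\h_{\bar 0}^{\sigma}$, equivalently when its $\sigma$-orbit sums to zero in $\h_{\bar 0}^{*}$. Combining the explicit description of the $\Gamma$-action from Section 3 with the tabulated root data of each basic classical $\g$, a case-by-case verification over the short list $\fsl(2m+1\mid 2n)$, $\fsl(2m\mid 2n)$, $\mathfrak{osp}(2m\mid 2n)$, $\mathfrak{osp}(2\mid 2n)$ shows that no such orbit sum vanishes. Hence $C_\g(\h_\Gamma)=\h$, completing the proof.
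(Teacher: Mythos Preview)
Your argument for part~(1) is essentially the paper's own, with the non-degeneracy spelled out more carefully than the paper bothers to.

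For part~(2) you take a genuinely different route. The paper adapts Kac's classical argument: writing the centralizer as $\h+\mathbf{s}$ with $\mathbf{s}$ a $\sigma$-invariant complement satisfying $\mathbf{s}\cap\g_0=0$, one shows by induction on the $\mathbb{Z}_m$-degree that every $\mathbf{s}_n=0$. The mechanism is that $\ad_x$ is nilpotent for $x\in\mathbf{s}_n$ (since iterated brackets eventually land in a graded piece already known to vanish), and likewise for $y\in\mathbf{s}_{-n}$; since $[\mathbf{s}_n,\mathbf{s}_{-n}]\subseteq\mathbf{s}_0=0$, the operators $\ad_x,\ad_y$ (anti)commute, $\ad_x\ad_y$ is nilpotent, and hence $\mathbf{K}(x,y)=0$. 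Part~(1) now says $\mathbf{s}_n$ and $\mathbf{s}_{-n}$ are non-degenerately paired, forcing $\mathbf{s}_n=0$. So the paper's two parts interlock: part~(1) is the engine that drives part~(2).

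Your reduction to ``no root of $\g$ restricts to zero on $\h_\Gamma$,'' equivalently ``no $\sigma$-orbit of roots sums to zero,'' is correct and is a perfectly legitimate alternative. What it buys is conceptual simplicity; what it costs is uniformity. The paper's argument works verbatim for any cyclic $\Gamma$ of order $m$, whereas your case list covers only the four order-two situations from the table and omits, for instance, $A(0,2n)$ with $\Gamma\cong\mathbb{Z}_4$ and the $\mathbb{Z}_3$-case of $D(2,1;\alpha)$ mentioned in Section~3. If you complete the enumeration your approach goes through, but the paper's method avoids the bookkeeping entirely and explains why part~(1) was proved first.
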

\begin{proof}
\begin{enumerate}[label=(\alph*)]
\item 
Consider the $\mathbb{Z}_{m}$-gradation for $\g$. Let  $x\in\g_{i}, y \in \g_{j}$ then 
 \[(x \mid y)=(\sigma(x) \mid \sigma(y))=\zeta^{i+j}(x \mid y).\]
 If $i+j \not\equiv 0\, {\pmod m}$, then $(x \mid y)=0$. If $i+j \equiv 0\, {\pmod m}$ then $(\g_{i} \mid \g_{j})$ are non-degenerately paired
 (as $(\cdot \mid \cdot)$ is non-degenerate).
 \item
Let $\h$ be the Cartan subalgebra of $\g$ and let us denote the centralizer of $\h_{\Gamma}$ in $\g$ as $\textbf{z}$. Our claim is $\h=\textbf{z}.$ Suppose $\textbf{z}=\h +\Sigma \g_{\alpha}$ where $\g_{\alpha}$ is the root space with respect to $\h$ and we take the roots such that $\alpha \mid_{\h_{\Gamma}}=0$. Hence $\h_{\Gamma} \subset \h$. Then $\textbf{z}=\h + \textbf{s}$ where $\textbf{s}$ is $\sigma$-invariant semisimple subalgebra. Clearly $\textbf{s} \cap \g_{0}=\{0\}$. Consider the $\mathbb{Z}_{m}$-gradation  $ \textbf{s}=\bigoplus_{k=0}^{m-1}\textbf{s}_{k}$ for $\textbf{s}$ such that $\textbf{s}_{0}=\{0\}$.

Let $N_{m}=\{0, 1, \cdots, m-1\}$ and $\textbf{s}_{a}=\textbf{s}_{b}$ if $b \in N_{m}$ and $a \equiv b\; {\pmod m}$. We want to prove $\textbf{s}=0$, and we will achieve this by showing $\textbf{s}_{n}=0$ for each integer $n$. We induct on $n$; for $n=0$ we have $\textbf{s}_{0}=0$. Let $n>0$ and $x \in \textbf{s}_{n}$. Then $(\mathrm{ad}_x)^{r} \textbf{s}_{i} \subseteq \textbf{s}_{nr+i}$. Choose $n \in \mathbb{N}$ such that $n(r-1) < m-i$ which implies $n r+i < m+n$. For some $0 \leq t <n$ we have $n r+i=m+t$. Hence $\textbf{s}_{n r+i}=\textbf{s}_{m+t}=\textbf{s}_{t}=0$ and the last equality holds by using the induction hypothesis. We find that $\mathrm{ad}_{x}$ is nilpotent. Similarly, $\mathrm{ad}_{y}$ is nilpotent for $y \in \textbf{s}_{-n}.$ But $[\textbf{s}_{n}, \textbf{s}_{-n}]\subset \textbf{s}_{0}=0$, so $x, y$ commutes. Now consider two cases; let both $x, y \in \g_{\bar{0}}$. Then \[0=\mathrm{ad}_{[x, y]}=[\mathrm{ad}_{x}, \mathrm{ad}_{y}]=\mathrm{ad}_{x} \mathrm{ad}_{y}-\mathrm{ad}_{y} \mathrm{ad}_{x},\] i.e. $\mathrm{ad}_{x}$ and $\mathrm{ad}_{y}$ commutes. If  $x, y \in \g_{\bar{1}}$. Then
\[ 0=\mathrm{ad}_{[x, y]}=[\mathrm{ad}_{x}, \mathrm{ad}_{y}]=\mathrm{ad}_{x} \mathrm{ad}_{y}+\mathrm{ad}_{y} \mathrm{ad}_{x}\] i.e., $\mathrm{ad}_{x}$ and $\mathrm{ad}_{y}$ anti commutes. In either cases $\mathrm{ad}_{x} \mathrm{ad}_{y}$ is a nilpotent operator. By using Engel's theorem, there is a basis for $\g$ with respect to which we can write all the nilpotent operators as strictly upper triangular matrices. Then trace of $\mathrm{ad}_{x} \mathrm{ad}_{y}$ is $0$ which means $K(x, y)=0$.
  
Any bilinear form $(\cdot \mid \cdot)$ invariant under automorphism group of $\g$ is a scalar multiple of the killing form $K(x, y)$ and hence $(x\mid y)=0$. But $x\in \textbf{s}_{n}$ and $y\in \textbf{s}_{-n}$. We have $\textbf{s}_{n}$ and $\textbf{s}_{-n}$ are non-degenerately paired, hence $x=y=0$. This means $\textbf{s}_{n}=0$ for each $n$ which proves the second part.
\end{enumerate}
\end{proof}
\begin{rem}
    The authors later discovered the first part of Lemma \ref{lem3.3} had already been there in \cite{vdL}.
\end{rem}
 \begin{lemma}\label{lem 3.4}
If $\Gamma$ is a cyclic group, then $\g^{0}=\h$, where $\h$ is the Cartan subalgebra of $\g$. In particular, if $\g$ is simple classical Lie superalgebra, then $\g^{0}=\h=\h_{\bar{0}}$.
\end{lemma}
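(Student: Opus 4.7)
The plan is that this lemma is essentially a repackaging of what has just been proved in the preceding lemma together with a standard fact about basic classical Lie superalgebras already recalled in Section~2.

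First I would recall that, by its very definition in the set-up preceding the lemma, $\g^{0}$ is the centralizer of $\h_{\Gamma}$ inside $\g$. Thus for the first assertion I simply invoke part~(2) of the previous lemma: when $\Gamma$ is cyclic, the centralizer of $\h_{\Gamma}$ in $\g$ coincides with the Cartan subalgebra $\h$ of $\g$. This yields $\g^{0}=\h$ with no further work.

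For the second assertion, I would appeal to the structural fact recorded earlier in the preliminaries: for any basic classical Lie superalgebra $\g$, the Cartan subalgebra $\h$ is taken to be the centralizer of $\h_{\bar 0}$ in $\g$, and moreover for basic classical $\g$ this reduces to the Cartan subalgebra of the even part, i.e.\ $\h=\h_{\bar 0}$. Combined with $\g^{0}=\h$ from the first part, this gives the chain $\g^{0}=\h=\h_{\bar 0}$. Strictly speaking the second clause should be phrased for basic classical $\g$ (the setting of the whole section), since it is precisely for these that the identification $\h=\h_{\bar 0}$ was asserted.

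There is no real obstacle here; the content of the lemma lies entirely in the preceding centralizer computation. The only minor subtlety worth flagging in the proof is that the cyclicity hypothesis on $\Gamma$ is used exactly where the preceding lemma used the $\mathbb{Z}_{m}$-grading $\g=\bigoplus_{s=0}^{m-1}\g_{s}$ to pair $\g_{i}$ non-degenerately with $\g_{-i}$ under the invariant form; everything else is formal.
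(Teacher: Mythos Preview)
Your proposal is correct and matches the paper's own proof essentially verbatim: the paper simply notes that $\g^{0}$ is the centralizer of $\h_{\Gamma}$ in $\g$ and invokes part~(2) of the preceding lemma. Your handling of the ``in particular'' clause via the earlier fact $\h=\h_{\bar 0}$ for basic classical $\g$ is also exactly what is intended.
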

\begin{proof}
It follows from Lemma \ref{lem3.3}, as $\g^{0}$ is centralizer of $\h_{\Gamma}$ in $\g$.
\end{proof}
\begin{rem}
It may happen that $\g^{\Gamma}=0$, in that case $\h_{\Gamma}=0$ and so $\g^{0}=\g$ is simple. However, from the Lemma \ref{lem 3.4} it is clear that if $\Gamma$ is a cyclic group, then $\g^{0}=\h_{\bar{0}}$ is an abelian subalgebra and hence $\g^{\Gamma}\neq 0$.
\end{rem}

Let $R =\{\alpha\in
\h_{\Gamma}^{*}\setminus \{0\}\mid \g^{\Gamma}_{\alpha}\neq 0\}$ \label{pg:3} be the set of roots, where $\g^{\Gamma}_{\alpha}=\{x\in\g^{\Gamma}\mid [h, x]=\alpha(h)x, \forall h\in \h_{\Gamma}\}$. Note that $\g^{\Gamma}_{\alpha}=(\g^{\alpha})^{\Gamma}$. For $\alpha\in R$, we have $\g^{\Gamma}_{\alpha}$ is either purely even, if $(\g^{\Gamma})_{\alpha}\subset\g_{\bar{0}}^{\Gamma}$ or, purely odd if $(\g^{\Gamma})_{\alpha}\subset\g_{\bar{1}}^{\Gamma}$. Let $R_{\bar{0}}$ be the set of even roots and $R_{\bar{1}}$ be the set of odd roots. Hence, we get $R=R_{\bar{0}}\cup R_{\bar{1}}$.
Let $\Delta\subset R$ denote the set of simple roots. Every choice of a set of simple roots $\Delta\subseteq R$ will yield a decomposition $R=R^{+}(\Delta)\cup R^{-}(\Delta)$ where $R^{+}(\Delta)$ denotes the positive roots and $R^{-}(\Delta)$ denotes the set of negative roots defined in the usual way. For ease of notation, we write $R^{+}(\Delta)=R^{+}$ and $R^{-}(\Delta)=R^{-}$. Define
\begin{equation*}
\Delta_{\bar{0}}=\Delta\cap R_{\bar{0}},\;\; \Delta_{\bar{1}}=\Delta\cap R_{\bar{1}},
    \;\; R^{+}_{\bar{0}}=R_{\bar{0}}\cap R^+,\;\; R^{-}_{\bar{0}}=R_{\bar{0}}\cap R^-,\;\;  R^{+}_{\bar{1}}=R_{\bar{1}}\cap R^+,\;\; 
    R^{-}_{\bar{1}}=R_{\bar{1}}\cap R^-.
\end{equation*}

\subsection{\textbf{Highest weight module over $\g^{\Gamma}$}}
 A $\g$-module $V$ is called a weight module if it admits a weight space decomposition 
\[V = \bigoplus_{\mu \in \h_{\bar 0}^{*}} V_{\mu}, \;\;\mbox{where}\;\; V_{\mu} = \{v \in V \mid hv = \mu(h)v, \forall h \in \h_{\bar 0}\}. \]
An element $\mu \in \h_{\bar 0}^{*}$ such that $V_{\mu} \neq 0$ is called a {\em weight} of $V$ and $V_{\mu}$ is called weight space. A vector $v\in V_{\mu}\setminus \{0\}$ is said to be the highest weight vector with highest weight $\mu$, if $\n^+v=0$. Similarly, $\lambda\in\h^*_{\bar{0}}$ is said to be lowest weight of a $\g$-module V, if $V_\lambda\neq\{0\}$ and $\n^-V_\lambda=\{0\}$ (\cite{BCM19}). Every irreducible finite dimensional $\g$-module is a highest weight module.
\begin{rem}
For a basic classical Lie superalgebra $\g$, the Cartan subalgebra $\h$ of $\g$ is the same as the Cartan subalegbra of $\g_{\bar{0}}$, i.e., $\h=\h_{\bar{0}}$. Hence $\h^*=\h^*_{\bar{0}}$ and $\h^*_{\Gamma}=(\h^*_{\Gamma})_{\bar{0}}.$
\end{rem} 
A $\g^{\Gamma}$-module $V$ is called a weight module if it has the weight space decomposition
$V = \bigoplus_{\mu \in \h^*_{\Gamma}} V_{\mu}, \;\;\mbox{where}\;\; V_{\mu} = \{v \in V \mid hv = \mu(h)v, \; \forall\; h \in \h_{\Gamma}\}$.
Here $\mu\in\h^*_{\Gamma}$ corresponding to $V_{\mu}\neq0$ is the weight of $V$ and $V_{\mu}$ is called the weight space. A vector $v\in V_{\mu}\setminus \{0\}$ is said to be the highest weight vector, with weight $\mu$, if $\n^+_{\Gamma}v=0$ and $\lambda\in\h^{*}_{\Gamma}$ is said to be the lowest weight of $\g^{\Gamma}$-module $V$ if $V_{\lambda}\neq0$ and $\n^-_{\Gamma}V_{\lambda}=\{0\}$.

\section{Equivariant map superalgebras}\label{sec:2}
In this section, first we recall the definitions of the map superalgebras, its highest weight modules, equivariant map Lie superalgebras and their highest weight modules \cite{Sav14}. We give the structure and root space decomposition of $\g^{\Gamma}$ and define one important condition which we use in defining the properties of the twisted global Weyl module. Recall $\g=\g_{\bar{0}} \oplus \g_{\bar{1}}$ is a Lie superalgebra over $\mathbb{K}$ and $A$ be a commutative associative unital $\mathbb{K}-$algebra.
\begin{defn}(Map superalgebras \cite{BCM19,Sav14})
The map superalgebra, $\g\otimes A$, is a vector space endowed with the $\mathbb{Z}_{2}$ grading given by $\g\otimes A=(\g\otimes A)_{\bar{0}}\oplus(\g\otimes A)_{\bar{1}}=(\g_{\bar{0}}\otimes A)\oplus(\g_{\bar{1}}\otimes A)$ and the Lie superbracket extending bilinearly
\begin{equation*}
    [u_1\otimes f_1,u_2\otimes f_2]=[u_1,u_2]\otimes f_1f_2    \quad u_1,u_2\in\g \quad f_1,f_2\in A.
\end{equation*}
    
\end{defn}
\begin{defn}(Weight Modules for map Lie superalgebras)
  A $\g\otimes A$-module $V$, is said to be a weight module, if its restriction to $\g$ is a weight module, that is, if
  \begin{equation*}
      V=\bigoplus_{\lambda\in\h^*}V_{\lambda},~~V_{\lambda}=\{v\in V|hv=\lambda(h)v,~\forall h\in\h\}.
  \end{equation*}
  The $\lambda\in\h^*$ such that $V_{\lambda}\neq0$, are called weights of $V$. A non zero element  $v\in V_{\lambda}$, for $\lambda\in\h^*$, is called as the weight vector of weight $\lambda$.
\end{defn}
\begin{defn}(Highest Weight Modules for map Lie superalgebras)
A $\g\otimes A$-module $V$ is called the {\em highest weight module}, if there exists a non zero vector $v\in V$ such that \[(\n^+\otimes A)v=0,  \quad   \mathbf{U}(\h\otimes A)v= \mathbb{K}v, \quad\mathbf{U}(\g\otimes A)v=V.\] Such a vector $v$ is called the highest weight vector corresponding to the highest weight $\lambda$ and $V_{\lambda}$ is called the highest weight space.
\end{defn}

Every highest weight module is a weight module as the highest weight vector is a weight vector and it generates the whole module. We identify Lie superalgebra $\g$ with the subsuperalgebra $\g\otimes \mathbb{K}\subseteq \g\otimes A$.
\begin{lemma}[\cite{Sav14}]\label{lem:3}
    Every irreducible finite dimensional $(\g\otimes A)$-module is a highest weight module.
\end{lemma}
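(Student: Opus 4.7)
The plan is to exhibit a single highest weight vector by combining a generalized weight space argument for $\h$ with a common eigenvector argument for the abelian algebra $\h\otimes A$; irreducibility will then force its cyclic hull to equal $V$.

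First, viewing $\g\cong\g\otimes 1$ as a subsuperalgebra of $\g\otimes A$ and using that $V$ is finite dimensional, I would decompose $V$ into generalized weight spaces for the purely even abelian subalgebra $\h=\h_{\bar{0}}$,
\[
V=\bigoplus_{\mu\in\h^{*}}V^{\mu},\qquad V^{\mu}=\bigl\{v\in V\mid (h-\mu(h))^{N}v=0\ \text{for all}\ h\in\h\ \text{and}\ N\gg 0\bigr\},
\]
with only finitely many nonzero summands. A short calculation from $[h,x\otimes a]=\alpha(h)(x\otimes a)$ yields $(h-(\mu+\alpha)(h))(x\otimes a)v=(x\otimes a)(h-\mu(h))v$ for $x\in\g^{\alpha}$, $a\in A$, $v\in V^{\mu}$, and induction on the exponent gives $(x\otimes a)V^{\mu}\subseteq V^{\mu+\alpha}$.

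Next I would pick $\lambda$ maximal among the generalized weights of $V$ for the partial order $\mu\preceq\nu\iff\nu-\mu\in\Z_{\geq 0}\Phi^{+}$, where $\Phi^{+}$ denotes the set of positive roots of $\g$; such a $\lambda$ exists since the set of generalized weights is finite and nonempty, and maximality immediately yields $(\n^{+}\otimes A)V^{\lambda}=0$. Because $\h\otimes A$ is abelian and commutes with $\h\otimes 1$, it preserves $V^{\lambda}$, and its image in the finite dimensional algebra $\End(V^{\lambda})$ is commutative. Standard simultaneous triangularization then produces a common eigenvector $v\in V^{\lambda}\setminus\{0\}$ on which $\h\otimes A$ acts by a character $\chi\colon\h\otimes A\to\C$; matching the action of $h\otimes 1$ with the $V^\lambda$ condition $(\chi(h\otimes 1)-\lambda(h))^N v=0$ forces $\chi(h\otimes 1)=\lambda(h)$. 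Hence $\mathbf{U}(\h\otimes A)v=\C v$, while $(\n^{+}\otimes A)v=0$ by construction, and $\mathbf{U}(\g\otimes A)v$ is a nonzero submodule of the irreducible $V$, hence equals $V$.

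The main subtlety I anticipate is obtaining the common eigenvector when $A$ is infinite dimensional: although $\h\otimes A$ may itself be infinite dimensional, its image in $\End(V^{\lambda})$ is automatically finite and commutative, so the usual triangularization applies without modification. The super structure poses no further difficulty since $\h$ is purely even; moreover, because $V^{\lambda}$ is $\Z_{2}$-graded and $\h\otimes A$ acts by even operators, one may in addition arrange for $v$ to be homogeneous.
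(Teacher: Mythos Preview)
The paper states this lemma without proof, treating it as a known fact from the literature on map superalgebras. Your argument is correct and is the standard one: decompose $V$ into generalized $\h$-weight spaces, take a maximal weight $\lambda$ to annihilate $\n^{+}\otimes A$, then extract a common eigenvector for the abelian algebra $\h\otimes A$ acting on the finite-dimensional space $V^{\lambda}$; irreducibility gives cyclicity. Your observation that the possible infinite dimensionality of $\h\otimes A$ is irrelevant---because only its finite-dimensional commutative image in $\End(V^{\lambda})$ matters for simultaneous triangularization---is exactly the right point, and the final homogeneity adjustment is handled correctly since $\h\otimes A$ is purely even and hence preserves the $\Z_{2}$-grading of $V^{\lambda}$.

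One minor remark: once you have the genuine eigenvector $v$ with $V=\mathbf{U}(\g\otimes A)v$, the PBW theorem together with the root-shifting property you established shows a posteriori that every generalized weight space was already an honest weight space, so your initial caution with generalized weights, while prudent, becomes unnecessary after the fact. This does not affect the validity of the argument.
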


 Let $\Gamma$ be a group acting on A and Lie superalgebra $\g$ by automorphisms. Then $\Gamma$ acts naturally on $\g\otimes A$ by extending the map $\gamma(u\otimes f)=(\gamma u)\otimes(\gamma f), \gamma\in\Gamma,u\in\g, f\in A$ by linearity. For a cyclic group $\Gamma$ acting on $\g$ ,we have already seen that the $\mathbb{Z}_{m}$-gradation for $\g$ is given by $\g=\bigoplus_{s=0}^{m-1}\g_{s}$. The action of $\Gamma$ on $A$ gives the gradation of $A$ to be $A=\bigoplus_{s=0}^{m-1}A_{s}$.

 \begin{defn}(Equivariant map superalgebras)
The equivariant map Lie superalgebra, denoted by $(\g\otimes A)^{\Gamma}$, is defined to be
 \begin{equation*}
     (\g\otimes A)^\Gamma = \{\mu\in\g\otimes A \mid \gamma\mu=\mu, \;\forall\quad \gamma\in\Gamma\},
 \end{equation*}
 which is the superalgebra of points fixed under the  action of $\Gamma$.
 \end{defn}
 Precisely, these are going to be elements from $\g_{s}\otimes A_{-s}$, i.e., $(\g\otimes A)^{\Gamma}=\bigoplus_{s=0}^{m-1}\g_s\otimes A_{-s}$ since, $u\otimes f\in \g_{s}\otimes A_{-s}~\Leftrightarrow ~\gamma(u\otimes f)=(\gamma u)\otimes(\gamma f)=\zeta^{s} u\otimes\zeta^{-s}f =u\otimes f$.

Since $\Gamma$ respects the triangular decomposition defined for $\g=\n^-\oplus\h \oplus\n^+$, we have the decomposition
\begin{equation}
    (\g\otimes A)^\Gamma=(\n^-\otimes A)^\Gamma\oplus(\h\otimes A)^\Gamma\oplus(\n^+\otimes A)^\Gamma.
\end{equation}
We give another equivalent way of defining the above gradation. Let $\Xi$ be the character group of $\Gamma$. This is an abelian group, whose group operation we will write additively. Hence 0 is the character of the trivial one dimensional representation. If $\xi$ is a character corresponding to an irreducible representation then $-\xi$ is the character of the dual representation.
 Now $(\g\otimes A)^{\Gamma}$ can also be written as
\begin{equation}
    (\g\otimes A)^\Gamma=\bigoplus_{\xi\in\Xi}\g_\xi\otimes A_{-\xi}
\end{equation}
where $\g_{\xi}=\{x\in\g~|~\sigma(x)=\xi x\}$ and $A_{\xi}=\{a\in A~|~\sigma(a)=\xi a\}$.
We say that $\g=\bigoplus_\xi g_\xi$ and $A=\bigoplus_\xi A_\xi$ are $\Xi-$ graded and $(\g_\xi\otimes A_{\xi'})^\Gamma=0$ if $\xi'\neq-\xi$.
\begin{defn}(Weight Modules and Highest Weight Modules)
    A $(\g\otimes A)^{\Gamma}$-module $V$ is called a weight module if its restriction to $\g^{\Gamma}$ is a weight module.
    \begin{equation*}
        V=\bigoplus_{\lambda\in\h_{\Gamma}^*}V_{\lambda},\quad V_{\lambda}=\{v\in V \mid hv=\lambda(h)v, \; \forall~h\in\h_{\Gamma}\}.
    \end{equation*}
    For $\lambda\in\h_{\Gamma}^*$, with $V_{\lambda}\neq0$, are called the weights of $V$ and $v\in V_{\lambda}$, such that $v\neq0$, is called the weight vector corresponding to the weight $\lambda$.
    \smallskip
A $(\g\otimes A)^{\Gamma}$-module $V$ is said to be the highest weight module, if there exists a non zero vector $v\in V$, called the highest weight vector,  such that $(\n^+\otimes A)^{\Gamma}v=0$, $\mathbf{U}(\h\otimes A)^{\Gamma}v=\mathbb{K}v$.
 \end{defn}
\begin{lemma}[\cite{Sav14}]\label{lem:4}
    Every finite dimensional $(\g\otimes A)^{\Gamma}$-module $V$ is the restriction of a $(\g\otimes A)$-module $\bar{V}$. Furthermore, $V$ is irreducible if and only if $\bar{V}$ is irreducible.
\end{lemma}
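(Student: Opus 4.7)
The plan is to establish the lemma by reducing the construction of $\bar{V}$ to a finite-dimensional quotient and exploiting the orbit structure of $\Gamma$ on $\MaxSpec A$. Since $V$ is finite-dimensional, there is a cofinite $\Gamma$-stable ideal $J \subseteq A$ such that $(\g \otimes J)^{\Gamma}$ acts trivially on $V$, so the action factors through $(\g \otimes A/J)^{\Gamma}$. It therefore suffices to extend the $(\g \otimes A/J)^{\Gamma}$-module structure on $V$ to a $(\g \otimes A/J)$-module structure, from which $\bar{V}$ is obtained by inflation along the projection $\g \otimes A \twoheadrightarrow \g \otimes A/J$.

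The finite-dimensional commutative $\Gamma$-algebra $A/J$ decomposes as a product $\prod_i B_i$ of local rings which $\Gamma$ permutes, and I would group these factors by $\Gamma$-orbits $\mathcal{O}$, writing $A/J \cong \prod_{\mathcal{O}} B_{\mathcal{O}}$ with $B_{\mathcal{O}} = \prod_{i \in \mathcal{O}} B_i$. Under the standing free-action assumption inherited from \cite{Sav14}, each orbit has size $|\Gamma|$ and the projection to a chosen representative $i_0 \in \mathcal{O}$ induces an isomorphism $(\g \otimes B_{\mathcal{O}})^{\Gamma} \cong \g \otimes B_{i_0}$ of Lie superalgebras, so that $(\g \otimes A/J)^{\Gamma} \cong \prod_{\mathcal{O}} (\g \otimes B_{i_0(\mathcal{O})})$. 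I would then extend this product action on $V$ to an action of $\g \otimes A/J = \prod_{\mathcal{O}} \prod_{i \in \mathcal{O}} (\g \otimes B_i)$ by declaring each factor $\g \otimes B_i$ with $i \neq i_0(\mathcal{O})$ to act by zero. This defines a Lie superalgebra homomorphism because the distinct components commute inside the product, and a direct computation shows that its restriction along the diagonal embedding $(\g \otimes B_{i_0}) \hookrightarrow (\g \otimes B_{\mathcal{O}})^{\Gamma}$ recovers the original action; hence $\bar{V}|_{(\g \otimes A)^{\Gamma}} = V$ as desired.

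For the equivalence of irreducibility, I would show that $V$ and $\bar{V}$ have the same submodule lattice when viewed through the two algebras. Any $(\g \otimes A)$-submodule of $\bar{V}$ restricts to a $(\g \otimes A)^{\Gamma}$-submodule of $V$. Conversely, if $W \subseteq V$ is stable under $(\g \otimes A)^{\Gamma}$, then every element of $\g \otimes A$ acts on $\bar{V}$ either as zero or as an operator coming from the $(\g \otimes B_{\mathcal{O}})^{\Gamma}$-factors, which are precisely operators already present in the $(\g \otimes A)^{\Gamma}$-action; consequently $W$ is automatically preserved and is a $(\g \otimes A)$-submodule of $\bar{V}$. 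The equivalence of irreducibility follows at once.

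The main obstacle I anticipate is the orbit decomposition step: one must verify that a sufficiently small cofinite $\Gamma$-stable $J$ exists (ensuring both that $(\g \otimes J)^{\Gamma}$ annihilates $V$ and that $A/J$ is well-behaved), that the free $\Gamma$-action on $\MaxSpec A$ descends to a free permutation of the local components $B_i$ of $A/J$, and that the identification $(\g \otimes B_{\mathcal{O}})^{\Gamma} \cong \g \otimes B_{i_0}$ is indeed a Lie superalgebra isomorphism after accounting for the diagonal $\Gamma$-action on both tensor factors. Different choices of orbit representatives $i_0(\mathcal{O})$ produce different but isomorphic extensions $\bar{V}$, which is acceptable since the lemma asserts only existence.
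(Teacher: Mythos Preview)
The paper states Lemma~\ref{lem:4} without proof; there is no argument in the text to compare against. It appears to be quoted as a known fact from the literature, in the spirit of \cite{Sav14} and \cite{FKKS12}, where this extension principle is established for equivariant map (super)algebras under a free-action hypothesis.

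Your proposal follows exactly that established route: pass to a cofinite $\Gamma$-stable quotient $A/J$, decompose it as a product of local rings permuted freely by $\Gamma$, identify $(\g\otimes B_{\mathcal{O}})^{\Gamma}\cong \g\otimes B_{i_0}$ on each orbit, and extend by letting the non-chosen factors act trivially. This is correct in outline and is the standard argument. Your observation that the images of the two representations in $\End(V)$ coincide is precisely what forces the submodule lattices of $V$ and $\bar V$ to agree, giving the irreducibility equivalence.

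Two points deserve care. First, the lemma as written in the paper carries no explicit free-action hypothesis, yet the statement is false without one (take $\Gamma$ acting trivially on $A$, so $(\g\otimes A)^{\Gamma}=\g^{\Gamma}\otimes A$, and most $\g^{\Gamma}$-modules do not extend to $\g$). You are right to flag this as an implicit standing assumption inherited from \cite{Sav14}; make it explicit in your write-up. Second, the existence of the cofinite $\Gamma$-stable ideal $J$ with $(\g\otimes J)^{\Gamma}V=0$ requires the structure result that every ideal of $(\g\otimes A)^{\Gamma}$ is of the form $(\g\otimes I)^{\Gamma}$ for a $\Gamma$-invariant ideal $I\subseteq A$ (stated in the paper just after Lemma~\ref{lem:4}); you should invoke that, together with finite-dimensionality of $V$, to produce $J$. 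With those two points addressed, your sketch is a complete proof along the lines the cited references use.
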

\begin{lemma}[\cite{Sav14}]
Suppose $\g$ is a finite dimensional simple Lie superalgebra. Then all ideals of $(\g\otimes A)^\Gamma$ are of the form $(\g\otimes\mathbf{I})=\bigoplus_{\zeta\in\Xi}\g_\zeta\otimes\mathbf{I}_{-\zeta}$ where $\mathbf{I}=\bigoplus_{\zeta\in\Xi}\mathbf{I_\zeta}$ is a $\Gamma$-invariant ideal of $A$.
 \end{lemma}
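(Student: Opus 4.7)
The plan is to reduce the claim to the known result for the untwisted map superalgebra $\g\otimes A$ and then descend via the $\Gamma$-action. The easy direction, that $\bigoplus_\zeta \g_\zeta\otimes \mathbf{I}_{-\zeta}$ is an ideal of $(\g\otimes A)^\Gamma$ whenever $\mathbf{I}$ is a $\Gamma$-invariant ideal of $A$, is a direct bracket calculation: for $y\otimes c\in \g_{\zeta'}\otimes A_{-\zeta'}$ and $x\otimes a\in \g_\zeta\otimes \mathbf{I}_{-\zeta}$ one has $[y\otimes c,x\otimes a]=[y,x]\otimes ca\in \g_{\zeta+\zeta'}\otimes \mathbf{I}_{-(\zeta+\zeta')}$, using that $\mathbf{I}$ absorbs multiplication by $A$ and that the $\Xi$-grading is multiplicative.

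For the converse, let $J\subseteq (\g\otimes A)^\Gamma$ be an ideal, and let $\widetilde J$ be the Lie ideal of $\g\otimes A$ it generates. Because $J$ is pointwise $\Gamma$-fixed and $\Gamma$ acts by superalgebra automorphisms, $\widetilde J$ is $\Gamma$-stable. Invoking the classical result for $\g$ finite-dimensional simple in the super setting (see \cite{CLS19}), every Lie ideal of $\g\otimes A$ has the form $\g\otimes \mathbf{I}$ for some ideal $\mathbf{I}\subseteq A$; hence $\widetilde J=\g\otimes \mathbf{I}$, and the $\Gamma$-stability of $\widetilde J$ forces $\mathbf{I}=\bigoplus_\zeta \mathbf{I}_\zeta$ to be a $\Gamma$-invariant $\Xi$-graded ideal of $A$. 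The inclusion $J\subseteq \widetilde J\cap (\g\otimes A)^\Gamma=\bigoplus_\zeta \g_\zeta\otimes \mathbf{I}_{-\zeta}$ is then automatic. To obtain the reverse inclusion, I would exploit that each $\g_\zeta$ is an irreducible $\g^\Gamma$-module and the $\g_\zeta$ are pairwise non-isomorphic across $\zeta$ (super analogue of Kac--Vinberg, using the structural results on $\g^\Gamma$ from Section 3), so the $\g^\Gamma$-stable ideal $J$ decomposes along $\Xi$ as $J=\bigoplus_\zeta\bigl(J\cap (\g_\zeta\otimes A_{-\zeta})\bigr)$, and any nonzero $x\otimes a$ in the $\zeta$-piece spreads out to all of $\g_\zeta\otimes a\subseteq J$ by iterated bracketing with $h\otimes 1\in \g^\Gamma\otimes 1$. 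Setting $\mathbf{J}_{-\zeta}:=\{a\in A_{-\zeta}\mid \g_\zeta\otimes a\subseteq J\}$ yields $J=\bigoplus_\zeta \g_\zeta\otimes \mathbf{J}_{-\zeta}$, and the task reduces to identifying $\mathbf{J}:=\bigoplus_\zeta \mathbf{J}_\zeta$ with $\mathbf{I}$.

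The crux will be verifying the $A$-ideal property of $\mathbf{J}$: for $a\in \mathbf{J}_{-\zeta}$ and $c\in A_\eta$, bracketing $y\otimes c\in \g_{-\eta}\otimes A_\eta\subseteq (\g\otimes A)^\Gamma$ against $x\otimes a\in J$ shows only that $[\g_{-\eta},\g_\zeta]\otimes ca\subseteq J$, and one must promote this to $\g_{\zeta-\eta}\otimes ca\subseteq J$. The image $[\g_{-\eta},\g_\zeta]$ is a $\g^\Gamma$-submodule of the irreducible $\g^\Gamma$-module $\g_{\zeta-\eta}$, so it equals $\g_{\zeta-\eta}$ whenever it is nonzero; non-vanishing (whenever $\g_{\zeta-\eta}\ne 0$) is forced by the non-degenerate pairing between $\g_{\zeta-\eta}$ and $\g_{\eta-\zeta}$ established earlier in the excerpt. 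Once $\mathbf{J}$ is shown to be a $\Gamma$-invariant ideal of $A$, the equality $\mathbf{J}=\mathbf{I}$ follows from the minimality characterization of $\mathbf{I}$ as the smallest $\Gamma$-invariant ideal of $A$ satisfying $J\subseteq \g\otimes \mathbf{I}$ (one containment is definitional, the other from $\widetilde J=\g\otimes \mathbf{I}\subseteq \g\otimes \mathbf{J}$), completing the identification.
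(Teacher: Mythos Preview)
The paper states this lemma without proof, so there is no argument of the authors to compare against; they appear to be quoting it as a known structural fact (the Lie-algebra version is in \cite{FMS15} and its antecedents, and the super analogue is implicit in \cite{Sav14}). Your outline follows the standard route, and the easy direction together with the containment $J\subseteq\bigoplus_\zeta\g_\zeta\otimes\mathbf I_{-\zeta}$ are fine.

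Two steps in the reverse containment are not yet justified. First, the assertion that each $\g_\zeta$ is an \emph{irreducible} $\g^\Gamma$-module is the super analogue of a theorem of Kac for diagram automorphisms of simple Lie algebras; it is neither proved in this paper nor obvious for arbitrary finite-dimensional simple Lie superalgebras with an arbitrary finite abelian $\Gamma$. Section~3 of the paper only treats specific diagram automorphisms of basic classical $\g$ and never establishes irreducibility of the eigenspaces. You must either cite a precise source in the super setting or supply a direct argument. Relatedly, ``bracketing with $h\otimes 1$'' only separates $\h_\Gamma$-weight components; spreading a single $x\otimes a$ to all of $\g_\zeta\otimes a$ genuinely requires the full $\g^\Gamma$-action together with that irreducibility.

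Second, your claim that $[\g_{-\eta},\g_\zeta]=\g_{\zeta-\eta}$ whenever the right-hand side is nonzero does not follow from the pairing you invoke: invariance gives $([\g_{-\eta},\g_\zeta],\g_{\eta-\zeta})=(\g_{-\eta},[\g_\zeta,\g_{\eta-\zeta}])$, which is circular. What is actually needed is a generation statement of the form $\sum_\xi[\g_\xi,\g_{\zeta-\eta-\xi}]=\g_{\zeta-\eta}$, coming from simplicity of $\g$ and compatibility of the $\Xi$-grading with the bracket; this is true but should be argued directly rather than via the form. Once these two points are secured, the identification $\mathbf J=\mathbf I$ goes through as you indicate.
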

\subsection{More on Structure of $\g^{\Gamma}$.}
Now we define how we arrive at the invariant subalgebra $\g^{\Gamma}$ in this paper. Let $\Gamma$ be the diagram automorphism group generated by the automorphism $\sigma$. The action of a diagram automorphism, and hence the invariant subalgebra $\g^{\Gamma}$, depends on the symmetry of the Dynkin diagram of $\g$. For $\g= A(2k,2l-1), A(2k-1,2l-1)(k,l)\neq(1,1), C(l+1), D(k+1,l)$ and $G(3)$, we consider the Dynkin diagram as described in Table 4 of  \cite{vdL}. It is clear from the Table in \cite{vdL} that, the automorphism $\sigma$ has order $2$. Hence $\Gamma\cong\mathbb{Z}_{2}$. Thus, we have $m=2$ in equation(\ref{eq:7}), and this gives us the following $\mathbb{Z}_{2}$ gradation of $\g$ 
\begin{equation*}
    \g=\g_{0}\oplus\g_{1}.
\end{equation*}
Here $\g_{0}=\g^{\Gamma}$ is the eigenspace corresponding to the eigenvalue $1$ and $\g_{1}$ is the eigenspace corresponding to the eigenvalue $-1$. Moreover, $\g_{1}$ is an irreducible $\g_{0}$ module. We obtain $\g^{\Gamma}$ using the folding technique on the Dynkin diagram of $\g$. The resulting fixed point Lie superalgebras are as given in table \ref{tab:1}.
\begin{table}[ht]
    \centering
    \begin{tabular}{|c|c|}
          \hline
    Superalgebra $\g$ & Invariant subalgebra $\g^{\Gamma}$\\   
    \hline
        $A(2k, 2l-1)$ & $B(k,l)$  \\
        $A(2k-1, 2l-1)$ & $D(k,l)$\\
        $C(l+1)$ & $B(0,l)$\\
        $D(k+1,l)$ & $B(k,l)$\\
        $G(3)$ & $D(2,1;\frac{-3}{4}$)\\
    \hline
    \end{tabular}
    \caption{}
    \label{tab:1}
\end{table}

Since we obtain these subsuperalgebras using the folding technique, they are singular invariant subsuperalgebras of $\g$. For $\g=B(k,l)$ and $F(4)$, the Dynkin diagrams do not admit non trivial symmetries. Hence, the diagram automorphism group is trivial and so $\g^{\Gamma}=\g$. Therefore, they are regular subalgebras of $\g$. We choose the root systems for $\g$ such that we obtain a distinguished root system for $\g^{\Gamma}$. This is necessary to arrive at the $\mathbf{C}$ condition discussed in the next section.

Hereafter, when we consider $\g$, we mean \[\g=A(2k,2l-1), A(2k-1,2l-1)((k,l)\neq(1,1)), C(l+1), D(k+1,l), B(k,l), F(4)\; \mbox{or}\; G(3)\] as taken above, unless otherwise stated. This would mean that 
\[\g^{\Gamma}=B(k,l),D(k,l), B(0,l), F(4)\; \mbox{or}\;\; D\left(2,1;\frac{-3}{4}\right).\]  
Consider
$\g=\g_{\bar{0}}\oplus \g_{\bar{1}}$ to be a simple basic classical Lie superalgebra having the triangular decomposition, induced by simple root system. So it is generated by $X_{\alpha}^{+}\in\g^{\alpha},~X_{\alpha}^{-}\in\g^{-\alpha}$ such that $[X_{\alpha}^{+},X_{\alpha}^{-}]=H_{\alpha}$ for all $\alpha$ in the simple root system of $\g$. Here $\g_{\bar{0}}$ is reductive, so for each even root $\{X_{\alpha}^{+}, X_{\alpha}^{-}, H_{\alpha}\}$ forms a $\mathfrak{sl}(2)$-triple. We denote the irreducible highest weight $\g$-module with highest weight $\lambda^{'}$ by $L_{\mathfrak{b}}(\lambda^{'}).$ Define \label{pg:4} $P^+=\{\lambda^{'} \in\h^* \mid L_{\mathfrak{b}}(\lambda^{'})~ \mbox{is finite dimensional}\}.$ Note that $\lambda^{'}(H_{\alpha}) \in \mathbb{N}$ for all even simple roots of $\g.$ 
\smallskip

Now $\g^{\Gamma}$, as described above, is also a basic classical Lie superalgebra (type $\textbf{II}$), it is generated by $x_{\alpha}^{+}\in(\g^{\Gamma})_{\alpha}$, $x^{-}_{\alpha}\in(\g^{\Gamma})_{-\alpha}$ such that $[x^{+}_{\alpha},x^{-}_{\alpha}]=h_{\alpha},~h_{\alpha}\in\h_{\Gamma}$ for all $\alpha\in \Delta$. We can also observe that $\g_{\bar{0}}^{\Gamma}$ is a finite dimensional semisimple Lie algebra, and hence, for each $\alpha\in R_{\bar{0}}$, there exist elements $x^{+}_{\alpha}\in(\g_{\bar{0}}^{\Gamma})_{\alpha}$, $x_{\alpha}^{-}\in(\g_{\bar{0}}^{\Gamma})_{-\alpha}$ and $h_{\alpha}\in\h_{\Gamma}$ such that the subalgebra generated by $\{x^{+}_{\alpha},x_{\alpha}^{-},h_{\alpha}\}$ is isomorphic to $\mathfrak{sl}(2)$. Thus, we get the following relation
\begin{equation}
    [x^{+}_{\alpha},x_{\alpha}^{-}]=h_{\alpha}, \quad [h_{\alpha},x_{\alpha}^{+}]=2x_{\alpha}^{+}, \quad [h_{\alpha},x_{\alpha}^{-}]=-2x_{\alpha}^{-} \quad \forall~\alpha\in R_{\bar{0}}.
\end{equation}
Let $L_\mathfrak{b}(\lambda)$ denote the unique irreducible $\g^{\Gamma}$-module of highest weight $\lambda$ and set
\begin{equation*}
\Lambda^+=\Lambda^+(\mathfrak{b})=\{\lambda\in\h^*_{\Gamma}|~L_\mathfrak{b}(\lambda)~ \mbox{is finite dimensional}\}.
\end{equation*}
Here $\mathfrak{b}=\h_{\Gamma}\oplus \n^+_{\Gamma}$ denotes the Borel subalgebra of $\g^{\Gamma}$. The existence of such an $L_{\mathfrak{b}}(\lambda)$ can be guaranteed from Lemma $\ref{lem:3}$ and $\ref{lem:4}$.

 For $\lambda\in\Lambda^{+}$, let $L_{\mathfrak{b}}(\lambda)$ be the finite dimensional $\g^{\Gamma}$ module. So, $L_{\mathfrak{b}}(\lambda)$ is a finite dimensional $\g_{\bar{0}}^{\Gamma}$ module and hence, $\lambda(h_{\alpha})\in\mathbb{N}$ for $\alpha\in\Delta_{\bar{0}}$.
\subsection{The ${\bf C}$ Condition}
 Let $-\theta$ be the lowest root of $\g^{\Gamma}$. Then the $\mathbf{C}$ condition is as follows:
 \[\mathbf{C}:-\theta \mbox{ is a root of} \;\g_{\bar{0}}^{\Gamma}.\]
  In this paper, we are interested in a triangular decomposition satisfying condition $\mathbf{C}$.
 In order to achieve this we choose the triangular decomposition for $\g^{\Gamma}$ such that the underlying simple root system is a distinguished root system as mentioned above. Once we have fixed the distinguished triangular root system for $\g^{\Gamma}$, we construct the triangular decomposition for $\g$ so that the two decompositions are consistent with each other, as mentioned in Section \ref{sec:1}.

 Let $\Delta_{dis}=\{\gamma_1,\ldots,\gamma_n\}$ be the set of distinguished simple roots for $\g^{\Gamma}$ and let $\gamma_s$ denote the unique odd root in $\Delta_{dis}$. With this simple root system, we can define a $\mathbb{Z}$-gradation for $\g^{\Gamma}$. Since $\g^{\Gamma}$ is one of the type $\textbf{II}$ basic classical Lie superalgbera, the $\mathbb{Z}$-gradation is as follows \cite{BCM19}:
 \begin{equation*}
     \g_{\bar{0}}^{\Gamma}=(\g^{\Gamma})_{-2}\oplus(\g^{\Gamma})_{0}\oplus(\g^{\Gamma})_{2}\quad\mbox{and}\quad\g_{\bar{1}}^{\Gamma}=(\g^{\Gamma})_{-1}\oplus(\g^{\Gamma})_{1}.
 \end{equation*} 
 The induced triangular decomposition for $\g^{\Gamma}$ would be
 \begin{equation*}
     \g^{\Gamma}=\n_{\Gamma}^{-}(\Delta_{dis})\oplus\h_{\Gamma}\oplus\n_{\Gamma}^{+}(\Delta_{dis}) \quad \mbox{where} \quad \n_{\Gamma}^{\pm}(\Delta_{dis})=(\n_{\Gamma}^{\pm})_{0}\oplus\biggl(\bigoplus_{i>0}(\g^{\Gamma})_{\pm i}\biggl).
 \end{equation*}
 \begin{lemma}
     Consider the Lie superalgebra $\g$. Let $\g^{\Gamma}$ be the fixed subalgebra and let $\Delta_{dis}$ be the set of distinguished simple root system for $\g^{\Gamma}$. Then $\g^{\Gamma}$ satisfies the $\mathbf{C}$ condition.
 \end{lemma}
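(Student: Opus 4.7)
The idea is to exploit the fact that $\g^{\Gamma}$ is a type \textbf{II} basic classical Lie superalgebra (as stated in Remark 3.3) together with the explicit form of the distinguished $\Z$-gradation. The condition $\mathbf{C}$ requires that $-\theta$ be a root of $\g_{\bar 0}^{\Gamma}$, where $\theta$ is the lowest root of $\g^{\Gamma}$ with respect to $\Delta_{dis}$. Since $\theta$ is obtained from the highest root $\theta_{\max}$ by negation, it suffices to show that $\theta_{\max}$ lies in $\g_{\bar 0}^{\Gamma}$.

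First I would recall that by construction of $\Delta_{dis}=\{\gamma_1,\ldots,\gamma_n\}$, there is a unique odd simple root $\gamma_s$, all other $\gamma_i$ being even. The distinguished $\Z$-gradation is defined by declaring $\deg \gamma_s = 1$ and $\deg \gamma_i = 0$ for $i\neq s$ and extending additively to all roots. This produces the decomposition $(3)$, so that $(\g^{\Gamma})_k \subseteq \g_{\bar 0}^{\Gamma}$ whenever $k$ is even and $(\g^{\Gamma})_k \subseteq \g_{\bar 1}^{\Gamma}$ whenever $k$ is odd.

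The key claim is then: $\theta_{\max}$ lies in $(\g^{\Gamma})_2$. I would justify this by noting that $\theta_{\max}$ is the unique root of maximal height in $\g^{\Gamma}$ with respect to $\Delta_{dis}$, and that for a type \textbf{II} basic classical Lie superalgebra the highest root expressed in simple roots has the (unique) odd simple root $\gamma_s$ appearing with coefficient exactly $2$. Consequently $\theta_{\max}$ has $\Z$-degree $2$, lies in $(\g^{\Gamma})_2 \subseteq \g_{\bar 0}^{\Gamma}$, and is therefore an even root. Since the root system is symmetric under negation, $\theta = -\theta_{\max}$ is also even and $-\theta = \theta_{\max}$ is a root of $\g_{\bar 0}^{\Gamma}$, which is the condition $\mathbf{C}$.

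The main obstacle is justifying that the highest root has $\gamma_s$-coefficient equal to $2$. I would handle this by appealing to the classification, running through the finite list of possibilities for $\g^{\Gamma}$ enumerated in Section 3, namely $\mathfrak{osp}(2m+1\mid 2n)$, $\mathfrak{osp}(2m\mid 2n)$, $\mathfrak{osp}(2m-1\mid 2n)$, $\mathfrak{osp}(1\mid 2n)$, together with the type \textbf{II} algebras $B(m,n)$, $F(4)$, $G(3)$ and the relevant $D(2,1;\alpha)$ fixing cases. In each instance the distinguished highest root is well known (e.g. $2\delta_1$ in the $\mathfrak{osp}$ series) and manifestly even, so the claim is verified uniformly. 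Alternatively, one can give a uniform argument from the structure of the $\Z$-gradation: since $[(\g^{\Gamma})_1,(\g^{\Gamma})_1]= (\g^{\Gamma})_2$ is nonzero for type \textbf{II} and $(\g^{\Gamma})_k=0$ for $|k|>2$, the component $(\g^{\Gamma})_2$ contains the unique root space of maximal $\Z$-degree, which must contain $\theta_{\max}$.
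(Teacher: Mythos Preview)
Your proposal is correct and rests on the same underlying idea as the paper: use the distinguished $\Z$-gradation of the type \textbf{II} algebra $\g^{\Gamma}$ to locate the extremal root in the component of $\Z$-degree $\pm 2$, which lies in $\g^{\Gamma}_{\bar 0}$.

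The execution differs, however. Your primary route is case-by-case via the classification of $\g^{\Gamma}$, checking that in each instance the highest root has $\gamma_s$-coefficient $2$; this is correct but heavier than necessary. Your alternative uniform argument (that $(\g^{\Gamma})_2\neq 0$ for type \textbf{II} and that the highest root must have maximal $\Z$-degree) is closer to what the paper does, though you leave implicit the step that $\theta_{\max}$ dominates every root in the root-lattice order, hence has $\Z$-degree at least that of any root in $(\g^{\Gamma})_2$. The paper instead argues on the negative side: it takes the lowest weight vector $x^-_{\theta}$ of $(\g^{\Gamma})_{-2}$ as a $(\g^{\Gamma})_0$-module, observes that $\n^-_{\Gamma}(\Delta_{dis}) = (\g^{\Gamma})_{-2}\oplus(\g^{\Gamma})_{-1}\oplus(\n^-_{\Gamma})_0$ and that $[(\g^{\Gamma})_{-2},(\g^{\Gamma})_{-2}\oplus(\g^{\Gamma})_{-1}]=0$ by the gradation bound, so $x^-_{\theta}$ is annihilated by all of $\n^-_{\Gamma}$ and is therefore the lowest root vector of $\g^{\Gamma}$. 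This gives $\g^{\Gamma}_{-\theta}\subseteq(\g^{\Gamma})_{-2}\subseteq\g^{\Gamma}_{\bar 0}$ without appealing to the classification or to a dominance argument. Both approaches work; the paper's is more self-contained, while yours (in its classification form) has the advantage of making the claim explicit in each family.
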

 \begin{proof}
     $\g^{\Gamma}$ has the $\mathbb{Z}$-gradation given as $\g^{\Gamma}=\g^{\Gamma}_{-2}\oplus\g^{\Gamma}_{-1}\oplus\g^{\Gamma}_{0}\oplus\g^{\Gamma}_{1}\oplus\g^{\Gamma}_{2}$. Let $x^{-}_{\theta}$ be the lowest weight of $\g^{\Gamma}_{-2}$ as a $\g_{0}^{\Gamma}$-module. Then $[x^{-}_{\theta},(\n^{-}_{\Gamma})_{0}]=0$, $\n^-_{\Gamma}(\Delta_{dis})=\g^{\Gamma}_{-2}\oplus\g^{\Gamma}_{-1}\oplus(\n^-_{\Gamma})_{0}$ and $[\g^{\Gamma}_{-2},\g^{\Gamma}_{-2}\oplus\g^{\Gamma}_{-1}]=0$. Hence in particular $[x^{-}_{\theta},\g^{\Gamma}_{-2}\oplus\g^{\Gamma}_{-1}]=0$. That is, the lowest weight of $\g^{\Gamma}_{-2}$ as a $\g^{\Gamma}_{0}$ module is also the lowest weight of $\g^{\Gamma}$. Hence $\g^{\Gamma}_{-\theta}\subseteq\g^{\Gamma}_{-2}$ where $-\theta$ is the lowest root. Since $\g_{-2}^{\Gamma}\subseteq\g^{\Gamma}_{\bar{0}}$, we obtain that $-\theta$ is a root of $\g_{\bar{0}}^{\Gamma}$.
 \end{proof}
  \section{Twisted Global Weyl Modules}\label{sec:5}
  In this section we develop the theory of global Weyl modules for $(\g\otimes A)^{\Gamma}$, where $\g$ is the Lie superalgebra and $A$ is the commutative associative unital algebra. We also observe the universal property of the global Weyl module in the category we define. Let $\mathcal{I}$ be the full subcategory of the category of $(\g\otimes A)$-modules which when restricted to $\g_{\bar{0}}$ is finitely semisimple.
 \subsection{Category $\mathcal{I}^\Gamma$}\label{sub:1}
 We define $\mathcal{I}^\Gamma$ to be the full subcategory of the category of $(\g\otimes A)^\Gamma$-modules which, when restricted to $\g_{\bar{0}}^{\Gamma}$ is finitely semisimple. From Lemma $\ref{lem:1}$ and $\ref{lem:2}$, we can see that this category is going to be closed taking submodules, quotients, arbitrary direct sums, and finite tensor products.
 \begin{defn}(The module $\bar{V}(\lambda)$)
     For $\lambda\in \Lambda^+$, we define $\bar{V}(\lambda)$ to be the $\g^{\Gamma}$-module generated by a vector $v$ with defining relations
     \begin{equation}\label{eq:1}
         \n^+_{\Gamma}v=0 \quad hv=\lambda(h)v \quad (x_{\alpha}^{-})^{\lambda(h_\alpha)+1}v=0 \quad \forall h\in \h_{\Gamma}, \alpha \in \Delta_{\bar{0}}.
     \end{equation}
\end{defn}
\begin{prop}
For all $\lambda \in \Lambda^+$, the module $\bar{V}(\lambda)$ is finite dimensional.
 \end{prop}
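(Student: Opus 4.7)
The plan is to reduce the claim to the finite-dimensionality of the $\g^{\Gamma}_{\bar{0}}$-submodule generated by $v_\lambda$, and then exploit the distinguished $\mathbb{Z}$-grading together with the $\mathbf{C}$ condition. First I would apply the PBW theorem to the triangular decomposition $\g^{\Gamma} = \n^{-}_{\Gamma} \oplus \h_{\Gamma} \oplus \n^{+}_{\Gamma}$ and use the defining relations $\n^{+}_{\Gamma} v_\lambda = 0$ and $hv_\lambda = \lambda(h)v_\lambda$ to write $\bar{V}(\lambda) = \mathbf{U}(\n^{-}_{\Gamma}) v_\lambda$. Splitting $\n^{-}_{\Gamma} = (\n^{-}_{\Gamma})_{\bar{0}} \oplus (\n^{-}_{\Gamma})_{\bar{1}}$ and recalling that $\g^{\Gamma}_{\bar{1}}$ is finite dimensional, so that $\Lambda((\n^{-}_{\Gamma})_{\bar{1}})$ is a finite-dimensional exterior algebra, I would choose a PBW ordering putting the odd generators first; since $v_\lambda$ is annihilated by $(\n^{+}_{\Gamma})_{\bar{0}}$, this yields
\[
\bar{V}(\lambda) \;=\; \Lambda((\n^{-}_{\Gamma})_{\bar{1}}) \cdot \mathbf{U}(\g^{\Gamma}_{\bar{0}}) v_\lambda .
\]
As the first factor is finite dimensional, the problem reduces to showing that the $\g^{\Gamma}_{\bar{0}}$-submodule $V_0 := \mathbf{U}(\g^{\Gamma}_{\bar{0}}) v_\lambda$ is finite dimensional.

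Next I would analyze $V_0$. The vector $v_\lambda$ is a $\g^{\Gamma}_{\bar{0}}$-highest-weight vector of weight $\lambda$; since $\lambda \in \Lambda^{+}$, the irreducible module $L_{\mathbf{b}}(\lambda)$ is finite dimensional, and its highest-weight line generates a finite-dimensional irreducible $\g^{\Gamma}_{\bar{0}}$-submodule inside $L_{\mathbf{b}}(\lambda)$, so $\lambda(h_\beta) \in \mathbb{Z}_{\geq 0}$ for every positive root $\beta$ of $\g^{\Gamma}_{\bar{0}}$. Using the distinguished grading $\g^{\Gamma}_{\bar{0}} = \g^{\Gamma}_{-2} \oplus \g^{\Gamma}_{0} \oplus \g^{\Gamma}_{2}$ supplied by the $\mathbf{C}$ condition, the simple roots of the Levi $\g^{\Gamma}_{0}$ are exactly $\Delta_{\bar{0}}$. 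Hence the defining relations $(x_\alpha^{-})^{\lambda(h_\alpha)+1} v_\lambda = 0$ for $\alpha \in \Delta_{\bar{0}}$, together with the dominance of $\lambda$, identify $\mathbf{U}(\g^{\Gamma}_{0}) v_\lambda$ with the finite-dimensional irreducible module $L_{\g^{\Gamma}_{0}}(\lambda)$. By PBW and $\g^{\Gamma}_{2} v_\lambda = 0$, this yields $V_0 = \mathbf{U}(\g^{\Gamma}_{-2}) \cdot L_{\g^{\Gamma}_{0}}(\lambda)$, with $\g^{\Gamma}_{-2}$ abelian.

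The main obstacle is to show that the action of the polynomial ring $\mathbf{U}(\g^{\Gamma}_{-2})$ on the finite-dimensional space $L_{\g^{\Gamma}_{0}}(\lambda)$ is locally nilpotent, so that the product is finite dimensional. I would exploit the $\fsl(2)$-triple at the highest root $-\theta$ of $\g^{\Gamma}$, which by the $\mathbf{C}$ condition lies in $\g^{\Gamma}_{\bar{0}}$. Since $v_\lambda$ is an $\fsl(2)$-highest-weight vector of non-negative integral weight $N := \lambda(h_{-\theta})$, the standard $\fsl(2)$-commutator identity produces the candidate singular vector $w := (x_\theta^{-})^{N+1} v_\lambda$ satisfying $x_{-\theta}^{+} w = 0$. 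Using the abelianness of $\g^{\Gamma}_{-2}$ together with the $\g^{\Gamma}_{0}$-irreducibility of $\g^{\Gamma}_{-2}$ (whose $\g^{\Gamma}_{0}$-lowest-weight vector is $x_\theta^{-}$) to commute the remaining positive root vectors past $(x_\theta^{-})^{N+1}$, one checks that $w$ is in fact annihilated by all of $\n^{+}_{\Gamma}$; cyclicity of $\bar{V}(\lambda)$ from the single highest-weight line $\mathbb{C} v_\lambda$ then forces $w = 0$. Propagating this nilpotency via the $\g^{\Gamma}_{0}$-action from $x_\theta^{-}$ to the whole irreducible $\g^{\Gamma}_{0}$-module $\g^{\Gamma}_{-2}$ shows that $\mathbf{U}(\g^{\Gamma}_{-2})$ acts through a finite-dimensional quotient on $L_{\g^{\Gamma}_{0}}(\lambda)$. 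Combining the reductions, $V_0$ is finite dimensional, and hence so is $\bar{V}(\lambda)$.
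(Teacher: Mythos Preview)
Your reduction to the finite-dimensionality of $V_0=\mathbf{U}(\g^{\Gamma}_{\bar 0})v_\lambda$ agrees with the paper, but the paper dispatches $V_0$ in one line rather than via the distinguished $\mathbb{Z}$-grading: it asserts that the $\g^{\Gamma}_{\bar 0}$-module presented by the relations $x_\alpha^+u_\lambda=0$, $hu_\lambda=\lambda(h)u_\lambda$, $(x_\alpha^-)^{\lambda(h_\alpha)+1}u_\lambda=0$ (for $\alpha\in\Delta_{\bar 0}$) is the finite-dimensional irreducible $L(\lambda)$, so the assignment $xu_\lambda\mapsto xv_\lambda$ gives a surjection $L(\lambda)\twoheadrightarrow V_0$; the PBW step with the finite-dimensional exterior algebra on $(\n^-_\Gamma)_{\bar 1}$ then concludes. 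No use is made of the $\mathbf{C}$ condition, the decomposition $\g^{\Gamma}_{\bar 0}=\g^{\Gamma}_{-2}\oplus\g^{\Gamma}_0\oplus\g^{\Gamma}_2$, or any $\mathfrak{sl}(2)$ argument at the highest root.

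Your alternative route has a genuine gap at the decisive step. Even granting that $w:=(x_\theta^-)^{N+1}v_\lambda$ is annihilated by all of $\n^+_\Gamma$ (which you only sketch: the vanishing under $\g^{\Gamma}_1$ and under the positive root vectors of $\g^{\Gamma}_0$ not lying in the $\theta$-string requires a real computation, not just abelianness of $\g^{\Gamma}_{-2}$), you deduce $w=0$ from ``cyclicity of $\bar V(\lambda)$ from the single highest-weight line $\mathbb{C}v_\lambda$.'' This inference is invalid: cyclic highest-weight modules routinely contain non-zero singular vectors of weight strictly below $\lambda$ (Verma modules and most of their proper quotients do). Since $\theta\notin\Delta_{\bar 0}$, none of the relations actually imposed on $\bar V(\lambda)$ kills $w$ directly, and there is no irreducibility or universal property of $\bar V(\lambda)$ available to force it. Thus the nilpotency of $x_\theta^-$ on $v_\lambda$, and with it the claimed local nilpotency of $\mathbf{U}(\g^{\Gamma}_{-2})$, is left unproved, and your argument does not close. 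If you want to salvage this line, you would need an independent reason why $\bar V(\lambda)$ is $\g^{\Gamma}_{\bar 0}$-integrable (equivalently, why each $x_\beta^-$ with $\beta\in R^+_{\bar 0}$ acts locally nilpotently)---which is exactly the content the paper extracts from the surjection $L(\lambda)\twoheadrightarrow V_0$.
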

 \begin{proof}
 Let $x_1, \ldots, x_m,y_1, \ldots, y_n$ be a homogeneous basis for $\g^{\Gamma}$ with $x_1, \ldots, x_m \in \g_{\bar0}^{\Gamma}$ and  $y_1, \ldots, y_n \in \g_{\bar1}^{\Gamma}$. Then by Lemma \ref{lem:0} the monomials
\[x_1^{a_1}\cdots x_m^{a_m} y_1^{b_1}\cdots y_n^{b_n}, \quad a_1, \ldots, a_m \geq 0, \quad \mbox{and}\quad b_1, \ldots, b_n \in \{0, 1\},\]
form a basis of $\mathbf{U}(\g^{\Gamma})$. Since $\{y_1^{b_1}\cdots y_n^{b_n} \mid b_j=0,1\}$ is a finite set, to prove that $\bar{V}(\lambda)$ is finite dimensional, it is enough to show that $U(\g_{\bar{0}}^{\Gamma})v$ is finite dimensional. 

Consider an irreducible $\g_{\bar{0}}^{\Gamma}$-module $L(\lambda)$ with highest weight $\lambda \in \Lambda^+$. Since $\g^{\Gamma}$ is one of the types $\textbf{II}$ superalgebra, we know that $\g_{\bar{0}}^{\Gamma}$ is a semisimple Lie algebra. Since $\lambda(h_\alpha) \in \mathbb{N}$ for all $\alpha \in \Delta_{\bar{0}}$, we have that $L(\lambda)$ is finite dimensional. Moreover, $L(\lambda)$ is isomorphic to the $\g_{\bar{0}}^{\Gamma}$ module generated by a vector $u_{\lambda}$ with the defining relations
     \begin{equation}
         x_{\alpha}^{+}u_\lambda=0 \quad hu_\lambda=\lambda(h)u_\lambda \quad (x_{\alpha}^{-})^{\lambda(h_\alpha)+1}u_\lambda=0 \quad \forall h\in \h_{\Gamma}, \alpha \in \Delta_{\bar{0}}.
     \end{equation}
     Let $V'=\mathbf{U}(\g_{\bar{0}}^{\Gamma})v \subset\bar{V}(\lambda)$ be the $\g_{\bar{0}}^{\Gamma}$-submodule of $\bar{V}(\lambda)$ generated by $v$. Then the map given by 
     \begin{equation}
         f:L(\lambda)\longrightarrow V', \quad xu_\lambda \longrightarrow xv \quad \forall x \in \mathbf{U}(\g_{\bar{0}}^{\Gamma})
     \end{equation}
     is a well defined epimorphism of $\g_{\bar{0}}^{\Gamma}$ modules. Thus, $V'$ is finite dimensional. 
 \end{proof}
 The proof for the following proposition is similar to the proof of Lemma 2.8 in \cite{CLS19}.
 \begin{prop}\label{prop:4'}
Suppose $V$ is a finite dimensional $\g^{\Gamma}$ module generated by a highest weight vector of weight $\lambda\in \Lambda^+$. Then there exists a unique submodule $W$ of $\bar{V}(\lambda)$ such that $\bar{V}(\lambda)/W\cong V$ is as $\g^{\Gamma}$ modules.
 \end{prop}
 Every simple finite dimensional $\g^{\Gamma}$-module is a highest weight module with some highest weight $\lambda$. Hence using Proposition \ref{prop:4'} any simple finite dimensional module is a quotient of $\bar{V}({\lambda}).$
 For a $\g^{\Gamma}$ module $V$, define 
 \begin{equation}
     P{^\Gamma}(V)=\mathbf{U}((\g\otimes A)^\Gamma)\otimes_{\mathbf{U(\g^{\Gamma})}}V. 
 \end{equation}
 We can view $V$ as a $\g^{\Gamma}$-submodule of $P^\Gamma(V)$ via the natural identification $V\cong \mathbb{K}\otimes V\subset P^\Gamma(V)$.
\begin{lemma}[\cite{FMS15}]\label{lem:14}
If $V$ is the direct sum of irreducible finite dimensional $\g$ modules (where $\g$ is a reductive Lie algebra), then so is the tensor algebra $T(V)=\bigoplus_{n=0}^{\infty}V^{\otimes n}$.
\end{lemma}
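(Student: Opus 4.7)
The plan is to reduce everything to the binary case $V \otimes V$ and invoke complete reducibility for finite-dimensional modules over the reductive Lie algebra $\g$. First, write $V = \bigoplus_{i \in I} V_i$ with each $V_i$ a finite-dimensional irreducible $\g$-module, so that
\[
V^{\otimes n} \;=\; \bigoplus_{(i_1,\ldots,i_n)\in I^n} V_{i_1}\otimes\cdots\otimes V_{i_n},
\]
and each summand is finite dimensional. Hence it suffices to show that every finite-dimensional $\g$-module of the form $V_{i_1}\otimes\cdots\otimes V_{i_n}$ decomposes as a direct sum of finite-dimensional irreducible $\g$-modules; once this is established the conclusion for $T(V) = \bigoplus_{n\geq 0} V^{\otimes n}$ follows by taking the (arbitrary) direct sum over all tuples and all $n$, using that the category considered (direct sums of finite-dimensional irreducibles for a reductive Lie algebra) is closed under arbitrary direct sums.

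Next I would verify that any tensor product of finite-dimensional irreducible $\g$-modules is completely reducible. Decompose $\g = \g_{ss}\oplus \mathfrak{z}$, where $\g_{ss}$ is semisimple and $\mathfrak{z}$ is the center. On each irreducible finite-dimensional $V_i$, the center $\mathfrak{z}$ acts by a character $\chi_i$. For an $n$-fold tensor $V_{i_1}\otimes\cdots\otimes V_{i_n}$, the Leibniz rule gives that $z\in\mathfrak{z}$ acts by the scalar $\chi_{i_1}(z)+\cdots+\chi_{i_n}(z)$, so $\mathfrak{z}$ still acts by a single character and in particular acts semisimply. Restricted to $\g_{ss}$, the module is finite dimensional, hence completely reducible by Weyl's theorem. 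Combining the two, the full $\g$-action is completely reducible: the module splits as a direct sum of finite-dimensional $\g$-isotypic components, each of which further decomposes into finite-dimensional irreducible $\g$-summands.

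Putting these pieces together yields
\[
V^{\otimes n} \;=\; \bigoplus_{(i_1,\ldots,i_n)\in I^n}\ \bigoplus_{\mu}\, m_{\mu}^{(i_1,\ldots,i_n)}\, L(\mu),
\]
with each $L(\mu)$ a finite-dimensional irreducible $\g$-module appearing with finite multiplicity in each tensor factor. Taking the direct sum over $n\geq 0$ gives the desired decomposition of $T(V)$.

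I do not expect any serious obstacle here; the whole argument is essentially a bookkeeping exercise combined with the two standard facts recalled above. The only point that requires a brief verification is the semisimple action of the center on tensor products, which as noted is immediate from the Leibniz rule since $\g$ acts on tensor products by derivations.
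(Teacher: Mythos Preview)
Your argument is correct. The paper itself states this lemma without proof, so there is nothing to compare against; your write-up supplies exactly the standard justification one would expect.

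A minor stylistic remark: once you observe that the center $\mathfrak{z}$ acts on $V_{i_1}\otimes\cdots\otimes V_{i_n}$ by a single character, you can streamline the conclusion. Any $\g_{ss}$-submodule is then automatically $\g$-stable, so the Weyl decomposition into $\g_{ss}$-irreducibles is already a decomposition into $\g$-irreducibles; there is no need to pass through isotypic components. This is implicit in what you wrote, but saying it this way avoids the slightly vague phrase ``combining the two.''
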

 \begin{lemma}
 Let $V\in C_{(\g^{\Gamma},\g_{\bar{0}}^{\Gamma})}$, that is, a  $\g^{\Gamma}$-module whose restriction to $\g_{\bar{0}}^{\Gamma}$ is finitely semisimple. Then $P^\Gamma(V)\in\mathcal{I}^\Gamma$. 
 \end{lemma}
 \begin{proof}
Since $\g_{\bar{0}}^{\Gamma}$ is a semisimple Lie algebra, $\g$ is a completely reducible $\g_{\bar{0}}^{\Gamma}$-module. Consider the action of $\g_{\bar{0}}^{\Gamma}$ on $\g\otimes A$ given by the adjoint action on the first factor. As $\g_{\bar{0}}^{\Gamma}$ module, $\g\otimes A\cong\g$. Hence, it follows that $\g\otimes A$ can be written as the direct sum of irreducible finite dimensional $\g_{\bar{0}}^{\Gamma}$ modules. The action of $\g_{\bar{0}}^{\Gamma}$ on $(\g\otimes A)^{\Gamma}$ is also by the left adjoint multiplication with $\g$. Our claim is that $(\g\otimes A)^\Gamma$ is invariant under action of $\g_{\bar{0}}^{\Gamma}$. For this, we need to show that, $\gamma(x(u\otimes a))=x(u\otimes a)$, where $\gamma\in\Gamma$, $x\in\g_{\bar{0}}^{\Gamma}$ and $u\otimes a\in (\g\otimes A)^{\Gamma}$. Let $[x,u]\otimes a\in\g_{\bar{0}}^{\Gamma}(\g\otimes A)^{\Gamma},  x\in\g_{\bar{0}}^{\Gamma}, (u\otimes a)\in(\g\otimes A)^{\Gamma}$.
\begin{align*}
  \gamma([x,u]\otimes a)&=\gamma([x,u])\otimes\gamma a \\
  [\gamma x,\gamma u]\otimes\gamma a&=[x,\zeta^{s}u]\otimes\zeta^{-s}a\quad \\
  &=[x,u]\otimes a.
\end{align*}
The second equality holds $x\in\g_{\bar{0}}^{\Gamma}~\mbox{and}~u\otimes a\in(\g\otimes A)^{\Gamma}$.
Hence $(\g\otimes A)^{\Gamma}$ can be written as the direct sum of irreducible finite dimensional $\g_{\bar{0}}^{\Gamma}$-modules. Using Lemma \ref{lem:14}, $T((\g\otimes A)^\Gamma)$ and hence $\mathbf{U}(\g\otimes A)^\Gamma$ are the direct sum of irreducible finite dimensional $\g_{\bar{0}}^{\Gamma}$ modules. Hence $P^\Gamma(V)\in\mathcal{I}^\Gamma$.
 \end{proof}
 \begin{prop}
If $\lambda\in \Lambda^+$, then $P^\Gamma(\bar{V}(\lambda))$ is generated as a $\mathbf{U}(\g\otimes A)^\Gamma$ module by an element $v_\lambda$ with the defining relations
\begin{equation}\label{eq:2}
         \n^+_{\Gamma}v_\lambda=0 \quad hv_\lambda=\lambda(h)v_\lambda \quad (x_{\alpha}^{-})^{\lambda(h_\alpha)+1}v_\lambda=0 \quad \forall h\in \h_{\Gamma}, \alpha \in \Delta_{\bar{0}}.
\end{equation}
\end{prop}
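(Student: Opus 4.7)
The plan is to deduce this as a direct application of the induction-of-cyclic-modules lemma stated immediately after Lemma \ref{lem:2}, taking $t=\g^{\Gamma}$ and the ambient algebra to be $(\g\otimes A)^{\Gamma}$. That lemma transports any presentation of a cyclic subalgebra module as $\mathbf{U}(t)/J$ into a presentation of the induced module as the quotient of the bigger enveloping algebra by the left ideal generated by the same $J$.

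First I would repackage $\bar{V}(\lambda)$ as a cyclic $\g^{\Gamma}$-module in the standard way: by the defining relations (\ref{eq:1}), we have $\bar{V}(\lambda)\cong\mathbf{U}(\g^{\Gamma})/J$ with $v_\lambda$ the coset of $1$, where $J\subseteq\mathbf{U}(\g^{\Gamma})$ is the left ideal generated by $\n^{+}_{\Gamma}$, by the set $\{h-\lambda(h):h\in\h_{\Gamma}\}$, and by $\{(x_{\alpha}^{-})^{\lambda(h_\alpha)+1}:\alpha\in\Delta_{\bar{0}}\}$. The embedding $\g^{\Gamma}\hookrightarrow(\g\otimes A)^{\Gamma}$, $x\mapsto x\otimes 1$, is legal because $1\in A^{\Gamma}$, and all of the generating elements just listed lie in $\g^{\Gamma}$ by the choice of Chevalley-like generators and the triangular decomposition fixed in Section 3. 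This makes $\bar{V}(\lambda)$ the $\g^{\Gamma}$-module from which $P^{\Gamma}$ is induced.

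Applying the induction-of-cyclic-modules lemma to $P^{\Gamma}(\bar{V}(\lambda))=\mathrm{ind}^{(\g\otimes A)^{\Gamma}}_{\g^{\Gamma}}\bar{V}(\lambda)$ then gives that $P^{\Gamma}(\bar{V}(\lambda))$ is cyclic on $1\otimes v_\lambda$ and isomorphic to $\mathbf{U}((\g\otimes A)^{\Gamma})/\widetilde{J}$, where $\widetilde{J}$ is the left ideal generated by the same elements of $J$ now viewed inside $\mathbf{U}((\g\otimes A)^{\Gamma})$. This is precisely the presentation claimed in (\ref{eq:2}): writing $v_\lambda$ for $1\otimes v_\lambda$, the three families of generators of $\widetilde{J}$ translate into the relations $\n^{+}_{\Gamma}v_\lambda=0$, $hv_\lambda=\lambda(h)v_\lambda$, and $(x_{\alpha}^{-})^{\lambda(h_\alpha)+1}v_\lambda=0$. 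That these relations actually hold at $v_\lambda\in P^{\Gamma}(\bar{V}(\lambda))$ is automatic since they already held inside $\bar{V}(\lambda)$ and all the operators involved sit in $\g^{\Gamma}\subseteq(\g\otimes A)^{\Gamma}$. The main (and essentially only) point of care is to confirm that the generators of $\n^{+}_{\Gamma}$, $\h_{\Gamma}$, and the root vectors $x_{\alpha}^{-}$ do lie in $\g^{\Gamma}$, so that the cyclic-induction lemma applies verbatim; beyond that bookkeeping I do not foresee any genuine obstacle, since the statement is essentially a corollary of the general cyclic-induction principle.
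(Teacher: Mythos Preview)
Your proposal is correct. Both your argument and the paper's rest on the same underlying principle---that induction carries a presentation of a cyclic $\g^{\Gamma}$-module to a presentation of the induced $(\g\otimes A)^{\Gamma}$-module by the same relations---but you invoke it as a black box via the cyclic-induction lemma stated after Lemma~\ref{lem:2}, whereas the paper unpacks that lemma in place: it lets $W$ be the abstract module with relations~(\ref{eq:2}), builds a surjection $\Pi_{1}:W\to P^{\Gamma}(\bar{V}(\lambda))$ from the fact that $v_{\lambda}$ satisfies those relations, builds a surjection $\Pi_{2}:P^{\Gamma}(\bar{V}(\lambda))\to W$ from the universal property of induction (since $w$ generates a $\g^{\Gamma}$-quotient of $\bar{V}(\lambda)$), and concludes they are mutually inverse. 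Your route is shorter and arguably cleaner, since the paper has already recorded the general lemma; the paper's hands-on version has the minor advantage of making the two maps explicit, which is reused verbatim in the next proposition on $W^{\Gamma}(\lambda)$.
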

 \begin{proof}
Since $v\in \bar{V}(\lambda)$ satisfies the relation \eqref{eq:2}, its image $v_\lambda=1\otimes v$ in $P^\Gamma(\bar{V}(\lambda))$ also satisfies the above relation. Now to show that these are the only relations satisfied, consider $W$ to be a $(\g\otimes A)^\Gamma$-module generated by a vector $w$ with defining relation \eqref{eq:2}. Then we have the surjective homomorphism of $(\g\otimes A)^\Gamma$-modules, $\Pi_1:W\longrightarrow P^\Gamma(\bar{V}(\lambda))$, which maps $w$ to $v_\lambda$. Since $w\in W$ satisfies the relation \eqref{eq:2}, from Proposition \ref{prop:4'}, there will exist a $\g^\Gamma$ submodule of $W$ which is isomorphic to the quotient of $\bar{V}(\lambda)$. Thus there exist an epimorphism
\begin{equation*}
         \Pi_2:P^\Gamma(\bar{V}(\lambda))\longrightarrow W, \quad u_1\otimes_{\mathbf{U}(\g^{\Gamma})}u_2v\longrightarrow u_{1}u_{2}w, \quad u_1\in \mathbf{U}((\g\otimes A)^\Gamma), u_2\in \mathbf{U}(\g^\Gamma).
\end{equation*}
Since $\Pi_1=\Pi_{2}^{-1}$, we have $W\cong P^\Gamma(\bar{V}(\lambda))$.
 \end{proof}
 For $\nu\in \Lambda^+$ and $V\in\mathcal{I}^\Gamma$, let $V^\nu$ be the unique maximal $(\g\otimes A)^\Gamma$-module quotient of V, such that the weights of $V^\nu$ lie in $\nu-Q^+_{\Gamma}$ \label{pg:6}, where $Q^+_{\Gamma}=\Sigma_{\alpha\in \Delta}\mathbb{N}\alpha$ is the positive root lattice of $\g^{\Gamma}$. In other words,
 \begin{equation*}
     V^\nu=V/\Sigma_{\mu\notin\nu-Q^+_{\Gamma}}\mathbf{U}((\g\otimes A)^\Gamma)V_\mu.
\end{equation*}
A morphism $f:V\longrightarrow W$ of objects in $\mathcal{I}^\Gamma$ induces a morphism $f^\nu:V^\nu\longrightarrow W^\nu$. Let $\mathcal{I}^{\Gamma}_{\nu}$ denote the full subcategory of $\mathcal{I}^\Gamma$ whose objects are those $V\in\mathcal{I}^\Gamma$ such that $V=V^\nu$. Therefore, $\mathcal{I}_{\lambda}^{\Gamma}$ \label{pg:5} is the full subcategory of $\mathcal{I}^{\Gamma}$ whose elements have maximum weight $\lambda$. When $\Gamma$ is trivial, $\mathcal{I}_{\lambda}$ is the category of $(\g\otimes A)$- modules which when restricted to $\g_{\bar{0}}$ is finitely semisimple and has maximum weight $\lambda$.
\begin{defn}(Twisted Global Weyl Module)\label{def:4}
We define the global Weyl module associated to $\lambda\in \Lambda^+$ is 
\begin{equation*}
    W^\Gamma(\lambda)=P^\Gamma(\bar{V}(\lambda))^\lambda.
\end{equation*}
    Denote $w^{\Gamma}_{\lambda}$ as the image of $v_\lambda$ in $W^\Gamma(\lambda)$.
\end{defn}
\begin{prop}\label{prop:1}
    For $\lambda\in \Lambda^+$, the twisted global Weyl module $W^\Gamma(\lambda)$ is generated by $w_{\lambda}^{\Gamma}$ with defining relations
    \begin{equation}\label{eq:3}
        (\n^+\otimes A)^\Gamma w^{\Gamma}_{\lambda}=0, \quad hw^{\Gamma}_{\lambda}=\lambda(h)w^{\Gamma}_{\lambda},\quad (x_{\alpha}^{-})^{\lambda(h_\alpha)+1}w^{\Gamma}_{\lambda}=0, \quad \forall h\in\h_{\Gamma},\alpha\in\Delta_{\bar{0}}.
    \end{equation}
\end{prop}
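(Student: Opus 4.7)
The plan is to derive the presentation directly from the previous proposition, which presents $P^\Gamma(\bar V(\lambda))$ via the generator $v_\lambda$ and relations (\ref{eq:2}), combined with the definition $W^\Gamma(\lambda) = P^\Gamma(\bar V(\lambda))^\lambda$. Two steps are needed: (i) verify that $w_\lambda^\Gamma$ satisfies the relations (\ref{eq:3}); (ii) show the universal property, i.e., that any $(\g \otimes A)^\Gamma$-module generated by a vector satisfying (\ref{eq:3}) is a quotient of $W^\Gamma(\lambda)$ under a map sending the generator to $w_\lambda^\Gamma$.

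For (i), the Cartan relation $h w_\lambda^\Gamma = \lambda(h) w_\lambda^\Gamma$ and the Serre-type relation $(x_\alpha^-)^{\lambda(h_\alpha)+1} w_\lambda^\Gamma = 0$ descend immediately from the corresponding relations in (\ref{eq:2}) for $v_\lambda$, since $W^\Gamma(\lambda)$ is by construction a quotient of $P^\Gamma(\bar V(\lambda))$. The remaining relation $(\n^+ \otimes A)^\Gamma w_\lambda^\Gamma = 0$ is strictly stronger than anything that holds in $P^\Gamma(\bar V(\lambda))$, and for this I argue by weights. Any homogeneous element $x \otimes a \in (\g^\beta \otimes A)^\Gamma$ with $\beta$ a positive root (with respect to the $\h_\Gamma$-grading fixed at the beginning of \S 3.1) sends $w_\lambda^\Gamma$ to a vector of weight $\lambda + \beta$, which is not in $\lambda - Q^+$. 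By definition of the functor $(-)^\lambda$ as the maximal quotient whose weights lie in $\lambda - Q^+$, every such vector must vanish in $W^\Gamma(\lambda)$.

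For (ii), let $M$ be a $(\g \otimes A)^\Gamma$-module generated by a vector $m$ satisfying (\ref{eq:3}). Since $\n^+_\Gamma \otimes 1 \subseteq (\n^+ \otimes A)^\Gamma$, the vector $m$ in particular satisfies the defining relations (\ref{eq:1}) of $\bar V(\lambda)$, giving a $\g^\Gamma$-module map $\bar V(\lambda) \to M$ sending $v_\lambda \mapsto m$. By the adjoint property of induction applied to $P^\Gamma = \mathbf{U}((\g \otimes A)^\Gamma) \otimes_{\mathbf{U}(\g^\Gamma)} -$, this extends to a $(\g \otimes A)^\Gamma$-module morphism $P^\Gamma(\bar V(\lambda)) \to M$, which is surjective because $m$ generates $M$. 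Applying the PBW theorem to the triangular decomposition $(\g \otimes A)^\Gamma = (\n^- \otimes A)^\Gamma \oplus (\h \otimes A)^\Gamma \oplus (\n^+ \otimes A)^\Gamma$ together with $(\n^+ \otimes A)^\Gamma m = 0$, every weight of $M$ lies in $\lambda - Q^+$. Hence the map factors through $P^\Gamma(\bar V(\lambda))^\lambda = W^\Gamma(\lambda)$, yielding the required surjection $W^\Gamma(\lambda) \twoheadrightarrow M$ sending $w_\lambda^\Gamma \mapsto m$.

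The main obstacle is the weight argument establishing the relation $(\n^+ \otimes A)^\Gamma w_\lambda^\Gamma = 0$ in (i); the other relations follow formally. The rest of the proof is a routine assembly of the presentation of $\bar V(\lambda)$, the presentation of $P^\Gamma(\bar V(\lambda))$ from the previous proposition, the universal property of induction, and PBW applied to $(\g \otimes A)^\Gamma$.
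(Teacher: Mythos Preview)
Your proof is correct and follows essentially the same approach as the paper. Both arguments first verify the relations \eqref{eq:3} hold for $w_\lambda^\Gamma$ (using the weight bound for the $(\n^+\otimes A)^\Gamma$ relation and descent from \eqref{eq:2} for the others), then construct a map $P^\Gamma(\bar V(\lambda))\to W$ to the module $W$ defined by \eqref{eq:3} via the $\g^\Gamma$-map $\bar V(\lambda)\to W$, and finally use that the weights of $W$ lie in $\lambda-Q^+$ (from PBW and $(\n^+\otimes A)^\Gamma w=0$) to factor through $W^\Gamma(\lambda)$. The only cosmetic difference is that you phrase step (ii) as a universal property for arbitrary $M$, while the paper works directly with the module $W$ presented by \eqref{eq:3} and exhibits explicit mutually inverse maps $\Pi_1,\Pi_2$; applying your (ii) with $M=W$ recovers exactly their $\Pi_2$.
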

\begin{proof}
Since the weights of $W^\Gamma(\lambda)$ lie in $\lambda-Q^+_{\Gamma}$, it follows that $(\n^+\otimes A)^\Gamma w^{\Gamma}_{\lambda}=0$. The remaining relations are also satisfied because they are satisfied by $v_\lambda$. To show that these are the only relations in $W^\Gamma(\lambda)$, we consider W to be the module generated by $w$ satisfying the relations in \eqref{eq:3}. Hence there will exist an epimorphism $\Pi_1:W\longrightarrow W^\Gamma(\lambda)$ sending $w$ to $w^{\Gamma}_{\lambda}$. Since relations \eqref{eq:3} imply relations \eqref{eq:1}, the vector $w\in W$ generates a $\g^\Gamma$-submodule of W isomorphic to the quotient of $\bar{V}(\lambda)$. Thus we have a surjective homomorphism 
    \begin{equation*}
        \Pi_2:P^\Gamma(\bar{V}(\lambda))\longrightarrow W, \quad u_1\otimes_{\mathbf{U}(\g^{\Gamma})}u_2v\longrightarrow u_{1}u_{2}w, \quad u_1\in \mathbf{U}((\g\otimes A)^\Gamma), u_2\in \mathbf{U}(\g)^\Gamma.
    \end{equation*}
    Since the $\g^\Gamma$ weights of W are bounded by $\lambda$, it follows that $\Pi_2$ induces a map $W^{\Gamma}(\lambda)\longrightarrow W$ inverse to $\Pi_1$.
\end{proof}
\begin{prop}\label{prop:2}
The twisted global Weyl module $W^\Gamma(\lambda)$ is the unique object of $\mathcal{I}^\Gamma$ up to isomorphism, that is generated by a highest weight vector of weight $\lambda$ and admits a surjective homomorphism to any object of $\mathcal{I}^\Gamma$ also generated by a highest weight vector of weight $\lambda$.
\end{prop}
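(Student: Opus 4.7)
The plan is to extract the universal mapping property of $W^\Gamma(\lambda)$ directly from the presentation in Proposition \ref{prop:1}, with the main technical step being to verify that any highest weight vector of weight $\lambda$ in an object of $\mathcal{I}^\Gamma$ automatically satisfies the integrability relations $(x_\alpha^-)^{\lambda(h_\alpha)+1} v = 0$ for $\alpha \in \Delta_{\bar 0}$. Uniqueness will then follow from a standard two-way surjection argument.

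First I would confirm that $W^\Gamma(\lambda)$ itself satisfies the two structural hypotheses: it lies in $\mathcal{I}^\Gamma$, since it is a quotient of $P^\Gamma(\bar V(\lambda))$, which lies in $\mathcal{I}^\Gamma$ by the preceding lemma, and $\mathcal{I}^\Gamma$ is closed under quotients; and it is generated by the highest weight vector $w_\lambda^\Gamma$ of weight $\lambda$ by Proposition \ref{prop:1}.

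Next, for the universal property, let $V \in \mathcal{I}^\Gamma$ be generated by a highest weight vector $v$ of weight $\lambda$. In order to apply Proposition \ref{prop:1} to obtain a surjection $W^\Gamma(\lambda) \to V$ with $w_\lambda^\Gamma \mapsto v$, I need to check that $v$ satisfies all three defining relations. The first two, $(\n^+\otimes A)^\Gamma v = 0$ and $hv = \lambda(h)v$, are part of the hypothesis. The hard part will be the relation $(x_\alpha^-)^{\lambda(h_\alpha)+1} v = 0$ for $\alpha \in \Delta_{\bar 0}$. To establish it, I would restrict the action to $\g_{\bar 0}^\Gamma$: since $V \in \mathcal{I}^\Gamma$, its restriction to $\g_{\bar 0}^\Gamma$ is a direct sum of finite-dimensional irreducibles, so the cyclic submodule $\mathbf{U}(\g_{\bar 0}^\Gamma) v$ is finite-dimensional. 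As $v$ is killed by $(\n_\Gamma^+)_{\bar 0} \subseteq (\n^+\otimes A)^\Gamma$ and has $\h_\Gamma$-weight $\lambda$, it is a highest weight vector for this finite-dimensional $\g_{\bar 0}^\Gamma$-module, and the standard $\fsl_2$-theory applied to the triples $(x_\alpha^+, h_\alpha, x_\alpha^-)$ for each $\alpha \in \Delta_{\bar 0}$ yields the integrability relation. This is the one place where the assumption $V \in \mathcal{I}^\Gamma$ is used in an essential way, and where the hypothesis that $\g^\Gamma$ satisfies condition $\mathbf{C}$ enters, via the presentation of $\bar V(\lambda)$.

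Finally, for uniqueness, suppose $V' \in \mathcal{I}^\Gamma$ is another object with the same universal property, generated by a highest weight vector $v'$ of weight $\lambda$. The universal property of $V'$ produces a surjection $\phi : V' \to W^\Gamma(\lambda)$, while the property just established for $W^\Gamma(\lambda)$ produces a surjection $\psi : W^\Gamma(\lambda) \to V'$ with $\psi(w_\lambda^\Gamma) = v'$. After renormalising so that $\phi(v') = w_\lambda^\Gamma$, the composition $\phi\circ\psi$ is a surjective $(\g\otimes A)^\Gamma$-endomorphism of the cyclic module $W^\Gamma(\lambda)$ fixing its generator, hence equals the identity; the symmetric argument applied to $\psi\circ\phi$ yields $V' \cong W^\Gamma(\lambda)$.
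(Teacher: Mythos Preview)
Your proof is correct and follows essentially the same route as the paper: verify the three defining relations of Proposition~\ref{prop:1} for an arbitrary highest weight generator $v$ in an object of $\mathcal{I}^\Gamma$ (the integrability relation coming from finite-dimensionality of the cyclic $\g_{\bar 0}^\Gamma$-submodule), then conclude uniqueness by producing mutually inverse surjections after normalising the image of the generator. One small correction: condition~$\mathbf{C}$ plays no role here---the relations in the presentation of $\bar V(\lambda)$ involve only the simple even roots $\alpha\in\Delta_{\bar 0}$, and the $\fsl_2$-argument for those needs nothing about the lowest root; condition~$\mathbf{C}$ enters only later, in the finite-generation results of Section~8.
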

\begin{proof}
Let $V\in\mathcal{I}^\Gamma$ be generated by a highest weight vector $v'$ of weight $\lambda$. Then
    \begin{equation*}
        (\n^+\otimes A)^{\Gamma}v'=0 \quad hv'=\lambda(h)v', \quad \forall h\in\h_{\Gamma}.
    \end{equation*}
 Since the $\g_{\bar{0}}^{\Gamma}$-module generated by $v'$ is finite dimensional we have that $(x_{\alpha}^{-})^{\lambda(h_\alpha)+1}v'=0$ for all $\alpha\in\Delta_{\bar{0}}$. Thus by Proposition $\ref{prop:1}$, there exists a surjective homomorphism $W^\Gamma(\lambda)\longrightarrow V$ such that $w^{\Gamma}_{\lambda}\mapsto v'$.
    
Suppose that $W$ is another object in $\mathcal{I}^\Gamma$ that is generated by the highest weight vector $w$ of weight $\lambda$ such that it admits a surjective homomorphism to any object of $\mathcal{I}^\Gamma$ also generated by the highest weight vector of weight $\lambda$, i.e, there exits a surjective homomorphism $\Pi_1:W\longrightarrow W^\Gamma(\lambda)$. It follows from PBW theorem that $W^\Gamma(\lambda)_\lambda=\mathbf{U}(\h\otimes A)^\Gamma w^{\Gamma}_{\lambda}$. The only elements of this weight space that generate $W^\Gamma(\lambda)$ are the $\mathbb{K}$ multiples of $w^{\Gamma}_{\lambda}$. After rescaling, we get $\Pi_1(w)=w^{\Gamma}_{\lambda}$. From the definition of $W$, we know that $w$ satisfies the relation \eqref{eq:3}. Thus there exists a homomorphism $\Pi_2:W^\Gamma(\lambda)\longrightarrow W$ sending $w^{\Gamma}_{\lambda}$ to $w$. It follows that $\Pi_1$ and $\Pi_2$ are mutually inverse homomorphisms and so $W\cong W^\Gamma(\lambda)$.
\end{proof}
\section{Twisted Weyl Functors}\label{sec:6}
Let $A$ be an associative commutative $\mathbb{K}$-algebra with unit and $\g$ be the $\mathbb{K}$-Lie superalgebra. Here, we define the algebra $\A$  which plays a crucial role in proving that the global Weyl modules are finitely generated. We also introduce the twisted Weyl functor and the twisted restriction functor.

For $\lambda\in \Lambda^+$, define 
\begin{equation*}
    \Ann_{(\g\otimes A)^\Gamma}(w_{\lambda}^{\Gamma})=\{u\in\mathbf{U}(\g\otimes A)^\Gamma\mid uw_{\lambda}^{\Gamma}=0\}
\end{equation*} and
\begin{equation*}
     \Ann_{(\h\otimes A)^\Gamma}(w_{\lambda}^{\Gamma})=Ann_{(\g\otimes A)^\Gamma}(w_{\lambda}^{\Gamma})\cap\mathbf{U}(\h\otimes A)^\Gamma.
\end{equation*}
Clearly $\Ann_{(\g\otimes A)^\Gamma}(w_{\lambda}^{\Gamma})$ is a left ideal of $\mathbf{U}(\g\otimes A)^\Gamma$. Since $\mathbf{U}(\h\otimes A)^\Gamma$ is a commutative algebra, $\Ann_{(\h\otimes A)^\Gamma}(w_{\lambda}^{\Gamma})$ is an ideal of $\mathbf{U}(\h\otimes A)^\Gamma$. Define the algebra $\A$ to be the quotient
\begin{equation}\label{eq:11}
    \A=\mathbf{U}(\h\otimes A)^\Gamma/\Ann_{(\h\otimes A)^\Gamma}(w_{\lambda}^{\Gamma}).
\end{equation}
By PBW theorem $W^\Gamma(\lambda)_\lambda=\mathbf{U}(\h\otimes A)^\Gamma w_{\lambda}^{\Gamma}$. Thus the unique homomorphism of $\mathbf{U}(\h\otimes A)^\Gamma$-modules satisfying
\begin{equation*}
     f:\mathbf{U}(\h\otimes A)^\Gamma\longrightarrow W^\Gamma(\lambda)_\lambda, \quad f(1)=w_{\lambda}^{\Gamma}
\end{equation*}
induces an isomorphism of $(\h\otimes A)^{\Gamma}$-modules between $W^\Gamma(\lambda)_\lambda$ and $\A$, i.e, $W^\Gamma(\lambda)_\lambda\cong\A$ as right $\A$-modules.
\begin{lemma}\label{lem:8}
    For all $\lambda\in \Lambda^+$ and $V\in \mathcal{I}^{\Gamma}_{\lambda}$, $(\Ann_{(\h\otimes A)^\Gamma}(w_{\lambda}^{\Gamma}))V_\lambda=0$. 
\end{lemma}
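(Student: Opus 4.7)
The plan is to show that any vector $v\in V_\lambda$ satisfies the defining relations \eqref{eq:3} of $W^\Gamma(\lambda)$, so that $v$ is the image of $w_\lambda^\Gamma$ under some $(\g\otimes A)^\Gamma$-module surjection, and then transport the annihilation property from $w_\lambda^\Gamma$ to $v$.

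First, fix $v\in V_\lambda$. Since $V\in \mathcal{I}^\Gamma_\lambda$, all weights of $V$ lie in $\lambda-Q^+$; in particular, for any homogeneous element $x\in(\n^+\otimes A)^\Gamma$ of positive root weight, $xv$ would sit in a weight space strictly above $\lambda$, which is zero, so $(\n^+\otimes A)^\Gamma v=0$. The relation $hv=\lambda(h)v$ for $h\in\h_\Gamma$ is immediate from $v\in V_\lambda$. The nontrivial step, and the one I expect to be the main obstacle, is deriving the integrability relation $(x_\alpha^-)^{\lambda(h_\alpha)+1}v=0$ for $\alpha\in\Delta_{\bar 0}$. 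For this I would use the assumption $V|_{\g_{\bar 0}^\Gamma}\in\mathcal{I}$, so that $v$ lies inside a finite-dimensional $\g_{\bar 0}^\Gamma$-submodule $M\subseteq V$. Inside $M$, the vector $v$ has weight $\lambda$ and is annihilated by $\n_{\Gamma,\bar 0}^+\subseteq(\n^+\otimes A)^\Gamma$, hence is a $\g_{\bar 0}^\Gamma$-highest weight vector of weight $\lambda$. Standard $\mathfrak{sl}_2$-theory applied to each simple root $\alpha\in\Delta_{\bar 0}$ inside the reductive Lie algebra $\g_{\bar 0}^\Gamma$ then forces $(x_\alpha^-)^{\lambda(h_\alpha)+1}v=0$.

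Once these three relations are verified for $v$, Proposition \ref{prop:1} furnishes a surjective $(\g\otimes A)^\Gamma$-module homomorphism
\[
\varphi\colon W^\Gamma(\lambda)\longrightarrow \mathbf{U}((\g\otimes A)^\Gamma)v\subseteq V,\qquad \varphi(w_\lambda^\Gamma)=v.
\]
In particular, $\varphi$ is $\mathbf{U}((\h\otimes A)^\Gamma)$-equivariant. Therefore, for every $u\in\mathrm{Ann}_{(\h\otimes A)^\Gamma}(w_\lambda^\Gamma)$ we have
\[
uv=u\cdot\varphi(w_\lambda^\Gamma)=\varphi(u\cdot w_\lambda^\Gamma)=\varphi(0)=0.
\]
Since $v\in V_\lambda$ was arbitrary, this yields $(\mathrm{Ann}_{(\h\otimes A)^\Gamma}(w_\lambda^\Gamma))V_\lambda=0$, as required. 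The only genuinely delicate point is the finite-dimensionality argument for the $\g_{\bar 0}^\Gamma$-submodule generated by $v$, which is why the hypothesis $V\in\mathcal{I}^\Gamma$ (and not merely that $V$ has a highest weight vector) is essential.
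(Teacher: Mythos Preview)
Your proof is correct and follows essentially the same route as the paper's: both produce a $(\g\otimes A)^\Gamma$-module map $W^\Gamma(\lambda)\to \mathbf{U}((\g\otimes A)^\Gamma)v$ sending $w_\lambda^\Gamma$ to $v$ and then push the annihilation through. The only cosmetic difference is that the paper invokes the universal property of $W^\Gamma(\lambda)$ (the proposition immediately following Proposition~\ref{prop:1}) to obtain this map, whereas you verify the three defining relations~\eqref{eq:3} directly and cite Proposition~\ref{prop:1}; since the proof of that universal property performs exactly your verification of the integrability relation via finite semisimplicity over $\g_{\bar 0}^\Gamma$, the two arguments are the same in substance.
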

\begin{proof}
    Let $v\in V_\lambda$ and $W=\mathbf{U}(\g\otimes A)^{\Gamma}v$. Since $V$ is an object of $\mathcal{I}^{\Gamma}_{\lambda}$, the submodule $W$ is also an object in $\mathcal{I}^{\Gamma}_{\lambda}$. Moreover, since $v\in V_\lambda$, we have $(\n^+\otimes A)^{\Gamma}v=0$ and $hv=\lambda(h)v$ for all $h\in\h_{\Gamma}$. Thus by the universal property of $W^{\Gamma}(\lambda)$, there exists a unique (surjective) homomorphism of $(\g\otimes A)^{\Gamma}$-modules $\pi:W^{\Gamma}(\lambda)\rightarrow W$ satisfying $\pi(w_{\lambda}^{\Gamma})=v$. Since $\pi$ is a homomorphism of $(\g\otimes A)^{\Gamma}$-modules and $uw_{\lambda}^{\Gamma}=0$ for all $u\in \Ann_{(\h\otimes A)^\Gamma}(w_{\lambda}^{\Gamma})$, we conclude that $uv=\pi(uw_{\lambda}^{\Gamma})=0$ for all $u\in \Ann_{(\h\otimes A)^\Gamma}(w_{\lambda}^{\Gamma})$.
\end{proof}
Since $\mathbf{U}(\h\otimes A)$ is a commutative algebra, so is its subalgebra $\mathbf{U}(\h\otimes A)^{\Gamma}$ and hence every left $\mathbf{U}(\h\otimes A)^{\Gamma}$ module is also a right $\mathbf{U}(\h\otimes A)^{\Gamma}$ module. From Lemma $\ref{lem:8}$ the left action of $\mathbf{U}(\g\otimes A)^{\Gamma}$ on any object V of $\mathcal{I}_{\lambda}^{\Gamma}$ induces a left as well as right action of $\mathbf{A}_{\lambda}^{\Gamma}$ on $V_{\lambda}$. Since $W^{\Gamma}(\lambda)$ is an object of $\mathcal{I}_{\lambda}^{\Gamma}$ generated by $w_{\lambda}^{\Gamma}\in W^{\Gamma}(\lambda)_{\lambda}$ as a left $\mathbf{U}(\g\otimes A)^{\Gamma}$-module, we have a right action of $\mathbf{A}_{\lambda}^{\Gamma}$ on $W^{\Gamma}(\lambda)_{\lambda}$ that commutes with the left $\mathbf{U}(\g\otimes A)^{\Gamma}$ action, i. e.  
\begin{equation*}
    (uw_{\lambda}^{\Gamma})u'=uu'w_{\lambda}^{\Gamma} \quad \forall \quad u\in\mathbf{U}(\g\otimes A)^{\Gamma}\quad and \quad u'\in\mathbf{U}(\h\otimes A)^{\Gamma} .
\end{equation*}
Thus with these actions, $W^\Gamma(\lambda)$ is a $(\mathbf{U}(\g\otimes A)^{\Gamma},\mathbf{A}_{\lambda}^{\Gamma})$-bimodule (\cite{FMS15}).
\subsection{Category $\A$-mod}\label{sub:2}
Given $\lambda\in \Lambda^+$, let $\mathbf{A}_{\lambda}^{\Gamma}-mod$ denote the category of left $\mathbf{A}_{\lambda}^{\Gamma}$-modules and let $M\in\mathbf{A}_{\lambda}^{\Gamma}-mod$. Since $W^\Gamma(\lambda)$ be a finitely semisimple $\g_{\bar{0}}^{\Gamma}$-module and the action of $\g_{\bar{0}}^{\Gamma}$ on $W^\Gamma(\lambda)\otimes_{\mathbf{A}_{\lambda}^{\Gamma}} M$ is given by left multiplication, we have that $W^\Gamma(\lambda)\otimes_{\mathbf{A}_{\lambda}^{\Gamma}}M$ is finitely semisimple $\g_{\bar{0}}^{\Gamma}$ module. Since $id:W^\Gamma(\lambda)\rightarrow W^\Gamma(\lambda)$ is an even homomorphism of the $(\g\otimes A)^\Gamma$-modules, for every $M, M'\in \mathbf{A}_{\lambda}^{\Gamma}-mod$ and $\tilde f\in \Hom_{\mathbf{A}_{\lambda}^{\Gamma}}(M,M')$,
\begin{equation*}
    id\otimes \tilde f:W^\Gamma(\lambda)\otimes_{\mathbf{A}_{\lambda}^{\Gamma}} M\rightarrow W^\Gamma(\lambda)\otimes_{\mathbf{A}_{\lambda}^{\Gamma}} M'
\end{equation*}
is a homomorphism of $(\g\otimes A)^\Gamma$-modules.
\begin{defn}(Twisted Weyl Functor)\label{def:5}
  The twisted Weyl functor $\mathbf{W}_\lambda^{\Gamma}$  associated with $\lambda\in \Lambda^+$ is defined as 
\begin{equation*}
\mathbf{W}_\lambda^{\Gamma}:\mathbf{A}_{\lambda}^{\Gamma}-\mathrm{mod}\rightarrow\mathcal{I}_{\lambda}^{\Gamma}, \quad \mathbf{W}_\lambda^{\Gamma}M=W^\Gamma(\lambda)\otimes_{\mathbf{A}_{\lambda}^{\Gamma}} M, \quad \mathbf{W}_\lambda^{\Gamma} \tilde f=id\otimes \tilde f
\end{equation*}  
for all $M, M'$ in $\mathbf{A}_{\lambda}^{\Gamma}-\mathrm{mod}$ and $\tilde f\in \mathrm{Hom}_{\mathbf{A}_{\lambda}^{\Gamma}}(M,M')$.
\end{defn}
Given $\lambda\in \Lambda^+$, there is an isomorphism of $(\g\otimes A)^\Gamma$-modules $\mathbf{W}_\lambda^{\Gamma}\mathbf{A}_{\lambda}^{\Gamma}\cong W^\Gamma(\lambda)$. Also, for all $\mu\in\h^{*}_{\Gamma}$ and M in $\mathbf{A}_{\lambda}^{\Gamma}-\mathrm{mod}$ we have
\begin{equation}
(\mathbf{W}_{\lambda}^{\Gamma}M)_\mu=W^\Gamma(\lambda)_\mu\otimes_{\mathbf{A}_{\lambda}^{\Gamma}} M.
\end{equation}
\begin{defn}(Twisted Restriction Functor $\mathbf{R}_{\lambda}^{\Gamma}$)\label{def:6}
From Lemma \ref{lem:8}, we see that, for $V\in\mathcal{I}_{\lambda}^{\Gamma}$, the action of $\mathbf{U}(\g\otimes A)^{\Gamma}$ on $V$ induces an action of $\A$ on $V_{\lambda}$. Given $\pi\in \mathrm{Hom}_{\mathcal{I}_{\lambda}^{\Gamma}}(V,V')$, the restriction of $\pi$ to $V_{\lambda}$ induces a homomorphism of $\A$-modules $\pi_{\lambda}:V_{\lambda}\rightarrow V_{\lambda}'$. This is the motivation for the definition of $\mathbf{R}_{\lambda}^{\Gamma}$. For $\lambda\in\Lambda^{+}$, and for $V\in\mathcal{I}_{\lambda}^{\Gamma}$, we define the functor $\mathbf{R}_{\lambda}^{\Gamma}:\mathcal{I}_{\lambda}^{\Gamma}\rightarrow\A-\mathrm{mod}$, such that
\begin{equation*}
 \mathbf{R}_{\lambda}^{\Gamma}V=V_{\lambda}, \quad \mathbf{R}_{\lambda}^{\Gamma}\pi=\pi_{\lambda}.
\end{equation*}
\end{defn}
\section{The Structure of Global Weyl Modules}\label{sec:7}
In this section we assume that $A$ is finitely generated. This assumption, is important in proving that the global Weyl module is finitely generated, which is the main result here. Under the action of a cyclic group $\Gamma$, consider the gradation $A=\oplus_{s=0}^{m-1}A_{s}$of $A$ as mentationed earlier. 
\begin{lemma}[\cite{Bourbaki85}]\label{lem:5}
The algebra $A^{\Gamma}$, which is the subalgebra of $A$ consisting of fixed points, is finitely generated as an algebra and $A_{s},~s\in\{0,1,\ldots,m-1\}$, is finitely generated as an $A^{\Gamma}$ module.
\end{lemma}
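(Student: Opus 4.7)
The plan is to invoke classical invariant theory for finite groups: since $\Gamma = \langle \sigma \rangle$ is cyclic of order $m$ acting on the finitely generated commutative $\C$-algebra $A$ by automorphisms, the whole lemma reduces to (i) Emmy Noether's finiteness theorem for rings of invariants, and (ii) the observation that the eigenspaces $A_s$ are direct summands of $A$ as $A^\Gamma$-modules via the standard character projectors. I would carry out the two assertions in the order: first show $A$ is a finitely generated $A^\Gamma$-module, then bootstrap to get $A^\Gamma$ finitely generated as an algebra, then finally peel off each $A_s$.

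First, for every $a \in A$ the polynomial $p_a(t) = \prod_{\gamma \in \Gamma}(t - \gamma(a))$ is monic with coefficients in $A^\Gamma$ (they are elementary symmetric functions in the $\Gamma$-orbit of $a$), so $A$ is integral over $A^\Gamma$. Choose $\C$-algebra generators $a_1, \ldots, a_n$ of $A$ and let $B \subseteq A^\Gamma$ be the $\C$-subalgebra generated by the coefficients of $p_{a_1}, \ldots, p_{a_n}$. Then $B$ is a finitely generated $\C$-algebra, hence Noetherian by Hilbert's basis theorem, and each $a_i$ is integral over $B$, so $A$ is a finitely generated $B$-module. The intermediate ring $A^\Gamma$ is a $B$-submodule of the Noetherian $B$-module $A$, hence finitely generated as a $B$-module, and therefore as a $\C$-algebra. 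In particular $A^\Gamma$ is itself Noetherian, and $A$ is a finitely generated $A^\Gamma$-module.

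For each $s \in \{0, 1, \ldots, m-1\}$, define the character projector
\[
P_s : A \longrightarrow A_s, \qquad P_s(a) = \frac{1}{m}\sum_{t=0}^{m-1} \zeta^{-st}\,\sigma^t(a),
\]
where $\zeta$ is the primitive $m$-th root of unity fixed earlier. For $x \in A^\Gamma$ one has $\sigma^t(xa) = x\,\sigma^t(a)$, so $P_s$ is $A^\Gamma$-linear. The usual orthogonality of characters of $\Gamma$ gives $P_s \circ P_{s'} = \delta_{s,s'}\,P_s$ and $\sum_{s=0}^{m-1} P_s = \mathrm{id}_A$, so $A = \bigoplus_s A_s$ is an internal direct sum decomposition of $A^\Gamma$-modules. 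Thus $A_s$ is a direct summand of $A$ as an $A^\Gamma$-module; since $A$ is finitely generated over the Noetherian ring $A^\Gamma$, so is each summand $A_s$.

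There is no real obstacle here: the statement is purely about the commutative algebra $A$ with a finite group action, and the superstructure of $\g$ plays no role. The only care needed is to run the two parts in the correct order (one needs $A^\Gamma$ to be Noetherian before asserting that a submodule of a finitely generated $A^\Gamma$-module is finitely generated), which is built into the Noether-trick argument above.
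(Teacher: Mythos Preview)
Your argument is correct and is the standard Noether finiteness proof: integrality of $A$ over $A^\Gamma$ via the orbit polynomial, the Artin--Tate trick with the intermediate Noetherian subring $B$, and then the character projectors to split off each $A_s$ as an $A^\Gamma$-summand. One minor remark: for the last step you do not actually need $A^\Gamma$ Noetherian, since a direct summand of a finitely generated module is automatically finitely generated (project the generators); but invoking Noetherianity does no harm.

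As for comparison with the paper: the paper does not supply a proof of this lemma at all. It is stated as a standing fact at the start of Section~8 (under the blanket hypothesis that $A$ is finitely generated) and then used in the proofs of Lemma~8.6 and Theorem~8.8. So your write-up in fact fills a gap the paper leaves implicit; the approach you chose is exactly the classical one the authors presumably had in mind.
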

\begin{lemma}
If $\lambda\in \Lambda^+$ and $\alpha\in R^{+}_{\bar{0}}$, then $(x_{\alpha}^{-})^{\lambda(h_\alpha)+1}w_{\lambda}^{\Gamma}=0$.
\end{lemma}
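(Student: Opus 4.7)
The plan is to reduce the claim to the classical fact that, in the finite-dimensional irreducible $\g_{\bar 0}^{\Gamma}$-module of highest weight $\lambda$, the vector $(x_{\alpha}^{-})^{\lambda(h_{\alpha})+1}$ annihilates the highest weight vector for \emph{every} positive even root $\alpha$, not just the simple ones. The relations already established for $w_{\lambda}^{\Gamma}$ give this on the nose for simple even roots; the content here is to propagate the relation to non-simple positive even roots via the $\g_{\bar 0}^{\Gamma}$-submodule $\mathbf{U}(\g_{\bar 0}^{\Gamma})\, w_{\lambda}^{\Gamma}$.

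More concretely, I would proceed as follows. First, from Proposition~\ref{prop:1} the vector $w_{\lambda}^{\Gamma}$ satisfies
\[
(\n_{\Gamma}^{+})\, w_{\lambda}^{\Gamma}=0,\quad h\, w_{\lambda}^{\Gamma}=\lambda(h)\, w_{\lambda}^{\Gamma}\ \forall h\in \h_{\Gamma},\quad (x_{\alpha}^{-})^{\lambda(h_{\alpha})+1}\, w_{\lambda}^{\Gamma}=0\ \forall \alpha\in\Delta_{\bar 0}.
\]
These are precisely the Serre-type defining relations of the irreducible highest weight module $L(\lambda)$ for the reductive Lie algebra $\g_{\bar 0}^{\Gamma}$ (compare with the presentation used in the proof that $\bar V(\lambda)$ is finite-dimensional). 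Hence there is a well-defined $\g_{\bar 0}^{\Gamma}$-module epimorphism
\[
\varphi:L(\lambda)\longrightarrow \mathbf{U}(\g_{\bar 0}^{\Gamma})\, w_{\lambda}^{\Gamma}\subseteq W^{\Gamma}(\lambda),\qquad u_{\lambda}\longmapsto w_{\lambda}^{\Gamma},
\]
intertwining the $\g_{\bar 0}^{\Gamma}$-actions.

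Next, I would invoke the standard fact from finite-dimensional representation theory of reductive Lie algebras: in the irreducible highest weight module $L(\lambda)$ with dominant integral highest weight $\lambda$, one has $(x_{\alpha}^{-})^{\lambda(h_{\alpha})+1}u_{\lambda}=0$ for \emph{every} $\alpha\in R_{\bar 0}^{+}$. This is proved by an $\mathfrak{sl}_{2}$-triple argument: the restriction of $L(\lambda)$ to the $\mathfrak{sl}_{2}$-triple $\{x_{\alpha}^{+},x_{\alpha}^{-},h_{\alpha}\}$ is finite-dimensional, $u_{\lambda}$ is a highest weight vector of $h_{\alpha}$-weight $\lambda(h_{\alpha})\in\Z_{\geq 0}$, and combined with $x_{\alpha}^{+}u_{\lambda}=0$ (which follows because $x_{\alpha}^{+}$ lies in $\n_{\Gamma}^{+}$ for any positive root $\alpha$), the usual $\mathfrak{sl}_{2}$-computation gives $(x_{\alpha}^{-})^{\lambda(h_{\alpha})+1}u_{\lambda}=0$.

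Finally, applying $\varphi$ and using that $\varphi(u_{\lambda})=w_{\lambda}^{\Gamma}$ together with the fact that $\varphi$ commutes with the action of $x_{\alpha}^{-}\in\g_{\bar 0}^{\Gamma}$, I obtain
\[
(x_{\alpha}^{-})^{\lambda(h_{\alpha})+1}\, w_{\lambda}^{\Gamma}=\varphi\!\left((x_{\alpha}^{-})^{\lambda(h_{\alpha})+1}u_{\lambda}\right)=\varphi(0)=0
\]
for every $\alpha\in R_{\bar 0}^{+}$, as required. The only step that requires care is verifying that the $\g_{\bar 0}^{\Gamma}$-module generated by $w_{\lambda}^{\Gamma}$ really is a quotient of $L(\lambda)$; this hinges on the fact that $\g_{\bar 0}^{\Gamma}$ is reductive (guaranteed by basicness of $\g^{\Gamma}$) so that the listed relations for simple roots give a complete presentation of $L(\lambda)$, and on the fact that $\n_{\Gamma}^{+}\cap\g_{\bar 0}^{\Gamma}\subseteq \n_{\Gamma}^{+}$ so that $x_{\alpha}^{+}w_{\lambda}^{\Gamma}=0$ for every positive even root $\alpha$. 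Once this is in hand, the argument is essentially immediate.
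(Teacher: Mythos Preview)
Your proof is correct but takes a different route from the paper's. The paper argues directly on the weight lattice of $W^{\Gamma}(\lambda)$: since $W^{\Gamma}(\lambda)\in\mathcal{I}^{\Gamma}_{\lambda}$ is a direct sum of finite-dimensional irreducible $\g_{\bar 0}^{\Gamma}$-modules, its set of weights is stable under the Weyl group of $\g_{\bar 0}^{\Gamma}$; applying the reflection $s_{\alpha}$ to the weight $\lambda-(\lambda(h_{\alpha})+1)\alpha$ of $(x_{\alpha}^{-})^{\lambda(h_{\alpha})+1}w_{\lambda}^{\Gamma}$ yields $\lambda+\alpha$, which lies outside $\lambda-Q^{+}$, forcing the vector to vanish. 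Your argument instead restricts attention to the cyclic $\g_{\bar 0}^{\Gamma}$-submodule $\mathbf{U}(\g_{\bar 0}^{\Gamma})w_{\lambda}^{\Gamma}$, realizes it as a quotient of the finite-dimensional irreducible module $L(\lambda)$, and invokes the $\mathfrak{sl}_{2}$-theory there. Both approaches ultimately rest on the same finite-dimensional $\mathfrak{sl}_{2}$ input, but the paper's version avoids constructing the auxiliary surjection and checking its well-definedness, while yours is slightly more self-contained in that it does not appeal to Weyl-group invariance of weights of the entire (infinite-dimensional) module $W^{\Gamma}(\lambda)$.
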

\begin{proof}
The vector $(x_{\alpha}^{-})^{\lambda(h_{\alpha})+1}w_{\lambda}^{\Gamma}$ has weight $\lambda-(\lambda(h_{\alpha})+1)\alpha$. Since the global Weyl module $W^{\Gamma}(\lambda)$ is an element of the category $\mathcal{I}_{\lambda}^{\Gamma}$, it can be written as the direct sum of finite dimensional irreducible $\g^{\Gamma}_{\bar{0}}$ modules. Hence the weights of $W^{\Gamma}(\lambda)$ remains invariant under the action of the Weyl group of $\g_{\bar{0}}^{\Gamma}$. Let $s_{\alpha}$ denote the reflection associated to the root $\alpha$. Then
    \begin{align*}
        s_{\alpha}(\lambda-(\lambda(h_{\alpha})+1)\alpha)&=(\lambda-(\lambda(h_{\alpha})+1)\alpha)-2\frac{(\alpha,(\lambda-(\lambda(h_{\alpha})+1)\alpha)}{(\alpha,\alpha)}\alpha\\
        &=(\lambda-(\lambda(h_{\alpha})+1)\alpha)-(\lambda-(\lambda(h_{\alpha})+1)\alpha)(h_{\alpha})\alpha \\
        &=\lambda-\lambda(h_{\alpha})\alpha-\alpha-\lambda(h_{\alpha})\alpha+\lambda(h_{\alpha})\alpha(h_{\alpha})\alpha+\alpha(h_{\alpha})\alpha \\
            &=\lambda-2\lambda(h_{\alpha})\alpha-\alpha+2\lambda(h_{\alpha})\alpha+2\alpha=\lambda+\alpha.
        \end{align*}
But the weights of $W^{\Gamma}(\lambda)$ are bounded above by $\lambda$. Hence $(x_{\alpha}^{-})^{\lambda(h_\alpha)+1}w_{\lambda}^{\Gamma}=0$.
\end{proof}
Given $a\in A^\Gamma$ and $\alpha\in R^{+}_{\bar{0}}$, define the power series in an indeterminate $u$ and with coefficients in $\mathbf{U}(\h_{\Gamma}\otimes A^{\Gamma})\subseteq\mathbf{U}(\h\otimes A)^{\Gamma}$ as follows:
\begin{equation}
    p(a,\alpha) = \exp \left( -\sum_{i=1}^{\infty} \frac{h_{\alpha} \otimes a^{i}}{i} u^{i} \right)
\end{equation}
where $h_{\alpha}\in\h_{\Gamma}$. For $i\geq 0$, let $p(a,\alpha)_i$ denote the coefficient of $u^i$ in $p(a,\alpha)$ and notice that $p(a,\alpha)_0=1$. The proof of the following Lemma follows from \cite[Lemma 7.5]{Garland1978}.
\begin{lemma}
Let $m\in\N$, $a\in A^{\Gamma}$ and $\alpha\in R_{\bar{0}}^+$. Then
\begin{equation*}
        (x^{+}_{\alpha}\otimes a)^{m}(x_{\alpha}^-)^{m+1}-(-1)^m\sum_{i=0}^{m}(x_{\alpha}^-\otimes a^{m-i})p(a,\alpha)_{i}\in\mathbf{U}(\mathfrak{sl}_{\alpha}\otimes A^{\Gamma})((\g^{\Gamma})_{\alpha} \otimes A^{\Gamma})
        \end{equation*}
where $\mathbf{U}(\mathfrak{sl}_{\alpha}\otimes A^{\Gamma})((\g^{\Gamma})_{\alpha}\otimes A^{\Gamma})$ denotes the left ideal of $\mathbf{U}(\mathfrak{sl}_{\alpha}\otimes A^{\Gamma})$ generated by $((\g^{\Gamma})_{\alpha}\otimes A^{\Gamma})=\mathbb{K} x^{+}_{\alpha}\otimes A^{\Gamma}$.
\end{lemma}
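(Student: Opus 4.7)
The plan is to recognize the claimed congruence as the classical Garland identity applied to the rank-one current algebra $\mathfrak{sl}_{\alpha} \otimes A^{\Gamma}$, and to prove it by induction on $m$. Since $\alpha \in R_{\bar{0}}^{+}$ is an even root, the triple $\{x_{\alpha}, x_{\alpha}^{-}, h_{\alpha}\}$ spans an $\mathfrak{sl}_{2}$-subalgebra $\mathfrak{sl}_{\alpha}$ sitting inside $\g_{\bar{0}}^{\Gamma}$, and the current algebra $\mathfrak{sl}_{\alpha} \otimes A^{\Gamma}$ is an ordinary (non-super) Lie algebra. Every ingredient of the stated identity---the factors $(x_{\alpha} \otimes a)$, $x_{\alpha}^{-}$, $(x_{\alpha}^{-} \otimes a^{j})$, and the coefficients $p(a,\alpha)_{i} \in \mathbf{U}(\h_{\Gamma} \otimes A^{\Gamma})$---lies in $\mathbf{U}(\mathfrak{sl}_{\alpha} \otimes A^{\Gamma})$, as does the left ideal being quotiented out. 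Thus the problem reduces to an identity in $\mathbf{U}(\mathfrak{sl}_{2} \otimes R)$ with $R = A^{\Gamma}$, where no super-signs intervene.

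I would proceed by induction on $m$. The base case $m=0$ is immediate: the expression collapses to $x_{\alpha}^{-} - (x_{\alpha}^{-} \otimes 1)\cdot p(a,\alpha)_{0} = x_{\alpha}^{-} - x_{\alpha}^{-} = 0$. For the inductive step, write
\[
(x_{\alpha} \otimes a)^{m} (x_{\alpha}^{-})^{m+1} = (x_{\alpha} \otimes a)\cdot\bigl[(x_{\alpha} \otimes a)^{m-1}(x_{\alpha}^{-})^{m}\bigr] \cdot x_{\alpha}^{-}
\]
and apply the inductive hypothesis to the middle bracket modulo the left ideal generated by $(\g^{\Gamma})_{\alpha} \otimes A^{\Gamma}$. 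What remains is to commute a single factor $(x_{\alpha} \otimes a)$ through a linear combination of terms of the form $(x_{\alpha}^{-} \otimes a^{j})\, p(a,\alpha)_{k} \cdot x_{\alpha}^{-}$. Each such commutation uses only the relation $[x_{\alpha} \otimes a,\, x_{\alpha}^{-} \otimes a^{j}] = h_{\alpha} \otimes a^{j+1}$, after discarding any term that reintroduces a factor of the form $x_{\alpha} \otimes (\text{anything})$ on the right, since such terms lie in the ideal.

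The cleanest way to package the resulting combinatorics is through generating functions: setting $P(u) = \sum_{i \geq 0} p(a,\alpha)_{i}\, u^{i}$ and $T(u) = \sum_{j\geq 0} (x_{\alpha}^{-} \otimes a^{j})\, u^{j}$, the logarithmic derivative identity
\[
\frac{d}{du}\log P(u) = -\sum_{i \geq 1}(h_{\alpha}\otimes a^{i})\, u^{i-1},
\]
which is built into the exponential definition of $p(a,\alpha)$, is precisely what is produced by the successive commutators described above. The main obstacle is exactly this combinatorial bookkeeping: one must verify that summing the $h_{\alpha}\otimes a^{i}$ corrections telescopes into the coefficients of $T(u)P(u)$. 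This is effectively a formal-power-series uniqueness argument of the same shape as in Garland's original proof and its commutative-algebra generalization in \cite{CFK10}. Once the generating-function bookkeeping is in place, the identity falls out directly.
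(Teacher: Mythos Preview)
The paper does not prove this lemma; it is quoted directly from \cite{CFK10} (which in turn adapts Garland's original loop-algebra computation to a general commutative coefficient ring). Your reduction to the ordinary rank-one current algebra $\mathfrak{sl}_{\alpha}\otimes A^{\Gamma}$ and the induction-plus-generating-function strategy you sketch is exactly the argument carried out in that reference, so your proposal matches the source the paper defers to.

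One small caution on the inductive step as you wrote it: after replacing the middle factor $(x_{\alpha}\otimes a)^{m-1}(x_{\alpha}^{-})^{m}$ by its congruence class modulo the left ideal, right-multiplying the resulting error term by $x_{\alpha}^{-}$ does not automatically remain in that ideal, since $U\,(x_{\alpha}\otimes b)\,x_{\alpha}^{-}= U\,x_{\alpha}^{-}(x_{\alpha}\otimes b)+U\,(h_{\alpha}\otimes b)$ and the second summand need not lie in the ideal. The cleaner route---as you already indicate---is to carry out the full generating-function computation rather than a naive ``work modulo the ideal'' induction; that is precisely how \cite{CFK10} organizes the bookkeeping.
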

The following three Lemmas will be used to proof the main result of the section, Theorem \ref{thm:1}.
\begin{lemma}\label{lem:6}
Let $\lambda\in \Lambda^+$, $\alpha\in R_{\bar{0}}^+$ and $a_{1},\cdots,a_{t}\in A^{\Gamma}$. Then for every $m_{1},\ldots, m_{t}\in \N$ we have
\[(x_{\alpha}^-\otimes a_{1}^{m_1}\cdots a_{t}^{m_t})w_{\lambda}^{\Gamma}\in \mathrm{span}_{\mathbb{K}}\{(x_{\alpha}^-\otimes a^{l_1}_{1}\cdots a^{l_t}_{t})w_{\lambda}^{\Gamma}A_{\lambda}^{\Gamma}~|~0\leq l_{1}, l_{2},\ldots,l_{t}<\lambda(h_{\alpha}),~h_{\alpha}\in\h_{\Gamma}\}.\]
In particular, $(\g_{\bar{0}}^{\Gamma}\otimes A^{\Gamma})w_{\lambda}^{\Gamma}$ is finitely generated right $A_{\lambda}^{\Gamma}$-module.
\end{lemma}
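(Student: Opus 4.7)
The plan is to prove the displayed inclusion by induction on the total degree $N := m_1 + \cdots + m_t$, combining the Garland-type identity from the preceding lemma with a polarization trick, and then to deduce the finiteness statement using that $A^\Gamma$ is finitely generated by Lemma~\ref{lem:5}.

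Setting $n = \lambda(h_\alpha)$, the base case $N < n$ is immediate since each $m_i < n$ and so the left-hand side already lies in the claimed span. For $N \geq n$, I would apply the preceding Garland-type identity with a generic $a \in A^\Gamma$ and $m = N$. Since $(x_\alpha^-)^{n+1} w_\lambda^\Gamma = 0$ by the earlier lemma, we also have $(x_\alpha^-)^{N+1} w_\lambda^\Gamma = 0$, so the first term $(x_\alpha \otimes a)^N (x_\alpha^-)^{N+1}$ annihilates $w_\lambda^\Gamma$; moreover the right ideal $\mathbf{U}(\mathfrak{sl}_\alpha \otimes A^\Gamma)((\g^\Gamma)_\alpha \otimes A^\Gamma)$ kills $w_\lambda^\Gamma$, because $(\g^\Gamma)_\alpha \otimes A^\Gamma \subseteq (\n^+\otimes A)^\Gamma$ and $(\n^+\otimes A)^\Gamma w_\lambda^\Gamma = 0$. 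Applying the identity to $w_\lambda^\Gamma$ then yields
\[
(x_\alpha^- \otimes a^N)\, w_\lambda^\Gamma \;=\; -\sum_{i=1}^{N} (x_\alpha^- \otimes a^{N-i})\, p(a,\alpha)_i\, w_\lambda^\Gamma.
\]
Because $p(a,\alpha)_i \in \mathbf{U}(\h \otimes A)^\Gamma$ and $W^\Gamma(\lambda)_\lambda \cong \A$ as right $\A$-modules, we have $p(a,\alpha)_i w_\lambda^\Gamma = w_\lambda^\Gamma \cdot \bar p_i$ for some $\bar p_i \in \A$, so the bimodule structure on $W^\Gamma(\lambda)$ rewrites each summand as $(x_\alpha^- \otimes a^{N-i}) w_\lambda^\Gamma \cdot \bar p_i$, an $\A$-multiple of a monomial of strictly smaller $a$-degree.

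The polarization step consists of substituting $a = c_1 a_1 + \cdots + c_t a_t$ with formal scalars $c_1, \ldots, c_t \in \C$ and viewing the resulting identity as an equality of polynomials in the $c_i$ with values in $W^\Gamma(\lambda)$. Expanding by the multinomial theorem, the coefficient of $c_1^{m_1} \cdots c_t^{m_t}$ on the left is $\binom{N}{m_1,\ldots,m_t}(x_\alpha^- \otimes a_1^{m_1}\cdots a_t^{m_t}) w_\lambda^\Gamma$, while the right side expands into a $\C$-linear combination of terms $(x_\alpha^- \otimes a_1^{l_1}\cdots a_t^{l_t}) w_\lambda^\Gamma \cdot \eta$ with total degree $l_1 + \cdots + l_t \leq N - 1$ and $\eta \in \A$. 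Since the multinomial coefficient is a nonzero rational, equating coefficients of $c_1^{m_1}\cdots c_t^{m_t}$ and invoking the inductive hypothesis on each lower-degree term closes the induction.

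For the ``In particular'' assertion, fix finitely many generators $a_1, \ldots, a_t \in A^\Gamma$ by Lemma~\ref{lem:5}; then $(\g_{\bar 0}^\Gamma \otimes A^\Gamma) w_\lambda^\Gamma$ is $\C$-spanned by elements $(x \otimes a_1^{m_1}\cdots a_t^{m_t}) w_\lambda^\Gamma$ with $x$ running over a basis of $\g_{\bar 0}^\Gamma$. Using the root decomposition $\g_{\bar 0}^\Gamma = \h_\Gamma \oplus \bigoplus_{\beta \in R_{\bar 0}^+}\bigl((\g^\Gamma)_\beta \oplus (\g^\Gamma)_{-\beta}\bigr)$: the positive root vectors annihilate $w_\lambda^\Gamma$; the Cartan contribution lies in $W^\Gamma(\lambda)_\lambda \cong \A$ and is cyclic over $\A$; and for each $\beta \in R_{\bar 0}^+$, the first part of the lemma bounds the $\A$-generators of the negative root contribution by the finite set of monomials with $0 \leq l_i < \lambda(h_\beta)$. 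Since $R_{\bar 0}^+$ is finite, we obtain a finite sum of finitely generated right $\A$-modules, hence finite generation.

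The main obstacle I anticipate is the polarization bookkeeping: one must verify carefully that after specializing $a = \sum c_i a_i$, the coefficients $p(a,\alpha)_i$ still act purely from the right via $\A$ on each resulting lower-degree monomial, using the cyclic right $\A$-structure on $W^\Gamma(\lambda)_\lambda$, and track how the multinomial weights distribute across the summands on both sides. These are essentially the same technical steps handled in the non-super CFK10 and BCM19 arguments, and the novelty here lies in confirming that the identity transports to the $\Gamma$-fixed subalgebra $A^\Gamma$ (which is finitely generated by Lemma~\ref{lem:5}) without losing the right $\A$-module structure on the coefficients.
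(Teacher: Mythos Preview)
The paper does not give its own proof of this lemma; it is stated with a citation to \cite{BCM19} and left unproved. Your proposal supplies a complete argument along the standard Garland--identity route used in \cite{CFK10} and \cite{BCM19}: apply the preceding lemma to obtain the recursion $(x_\alpha^-\otimes a^N)w_\lambda^\Gamma = -\sum_{i\ge 1}(x_\alpha^-\otimes a^{N-i})w_\lambda^\Gamma\cdot\bar p_i$, then polarize with $a=\sum c_ja_j$ to pass from powers of a single element to monomials in several generators, and close by induction on total degree. This is essentially the same approach as in the cited reference, so your proposal is correct and aligned with the intended argument.

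Two minor points worth tightening. First, the rewriting $(x_\alpha^-\otimes a^{N-i})\,p(a,\alpha)_i\,w_\lambda^\Gamma = \bigl((x_\alpha^-\otimes a^{N-i})w_\lambda^\Gamma\bigr)\cdot\bar p_i$ is exactly the definition of the right $\A$-action $(uw_\lambda^\Gamma)u'=uu'w_\lambda^\Gamma$, so no commutation issue arises; you might state this explicitly rather than flag it as an ``obstacle.'' Second, the edge case $\lambda(h_\alpha)=0$ deserves a word: the base case $N<0$ is vacuous, but Lemma~8.2 gives $x_\alpha^- w_\lambda^\Gamma=0$ directly, and then the same recursion forces $(x_\alpha^-\otimes a^m)w_\lambda^\Gamma=0$ for all $m$ by induction, so the claimed span (which is $\{0\}$) is still correct.
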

\begin{proof}
The proof follows similarly as in \cite[Lemma 7.3]{BCM19}.
\end{proof}
\begin{lemma}{\label{lem:7}}
Let $\lambda\in\ \Lambda^{+}, \alpha\in R^{+}_{\bar{0}}, x_{1},\ldots, x_{k}\in n^{+}_{\Gamma}$ and $a_{1},\ldots, a_{t}\in A^{\Gamma}$. Then, for all $m_{1},\ldots, m_{t}\in \mathbb{N}$, the element $([x_{1},[x_{2},\cdots[x_{k},x_{\alpha}^{-}]\cdots]]\otimes a_{1}^{m_{1}}\cdots a_{t}^{m_{t}})w_{\lambda}^{\Gamma}$ is in
\begin{equation*}
        \mathrm{span}_{\mathbb{K}}\{([x_{1},[x_2,\cdots[x_{k},x_{\alpha}^{-}]\cdots]]\otimes a_{1}^{l_{1}}\cdots a_{t}^{l_{t}}w_{\lambda}^{\Gamma}A_{\lambda}^{\Gamma}~|~0\leq l_{1},l_{2},\cdots,l_{t}<\lambda(h_\alpha),~h_{\alpha}\in\h_{\Gamma}\}.
    \end{equation*}
\end{lemma}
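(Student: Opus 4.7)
I propose to proceed by induction on the number $k$ of nested commutators. The base case $k=0$ asserts that $(x_\alpha^-\otimes a_1^{m_1}\cdots a_t^{m_t})\w$ lies in the claimed span, which is precisely Lemma~\ref{lem:6}.

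For the inductive step, fix $k\geq 1$ and set $y=[x_2,[x_3,\cdots,[x_k,x_\alpha^-]\cdots]]$, a $(k-1)$-fold nested bracket. Since $x_1\in\n^+_\Gamma\subseteq\g^\Gamma$, the element $x_1\otimes 1$ lies in $(\g\otimes A)^\Gamma$, and the defining relations of $W^\Gamma(\lambda)$ from Proposition~\ref{prop:1} force $(x_1\otimes 1)\w=0$. For any $f\in A^\Gamma$, the super-commutator identity
\[
(x_1\otimes 1)(y\otimes f)-(-1)^{|x_1||y|}(y\otimes f)(x_1\otimes 1)=[x_1,y]\otimes f
\]
in $\mathbf{U}((\g\otimes A)^\Gamma)$, when applied to $\w$, collapses (the second summand vanishes) to
\[
([x_1,y]\otimes f)\w=(x_1\otimes 1)(y\otimes f)\w.
\]

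Invoking the inductive hypothesis on $(y\otimes a_1^{m_1}\cdots a_t^{m_t})\w$ yields elements $u_{\bar{l}}\in\A$ with $0\leq l_i<\lambda(h_\alpha)$ satisfying
\[
(y\otimes a_1^{m_1}\cdots a_t^{m_t})\w=\sum_{\bar{l}}\,(y\otimes a_1^{l_1}\cdots a_t^{l_t})\w\cdot u_{\bar{l}}.
\]
Acting on both sides by $(x_1\otimes 1)$, using that the right $\A$-action on $W^\Gamma(\lambda)$ commutes with the left $\mathbf{U}((\g\otimes A)^\Gamma)$-action (from the bimodule structure established prior to the definition of the Weyl functor), and then reapplying the commutator identity term-by-term, I recover
\[
([x_1,y]\otimes a_1^{m_1}\cdots a_t^{m_t})\w=\sum_{\bar{l}}\,([x_1,y]\otimes a_1^{l_1}\cdots a_t^{l_t})\w\cdot u_{\bar{l}},
\]
which is the desired containment. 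The main obstacle is purely bookkeeping: tracking the super-commutator signs and verifying that the left $(\g\otimes A)^\Gamma$-action commutes past the right $\A$-action in the displayed rearrangement. Both are already in place from the bimodule setup, so no machinery beyond Lemma~\ref{lem:6} and the annihilation $(x_1\otimes 1)\w=0$ is required.
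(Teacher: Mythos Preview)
Your argument is correct and is exactly the standard induction on $k$ that underlies the result: Lemma~\ref{lem:6} handles $k=0$, and the inductive step follows from the vanishing $(x_1\otimes 1)\w=0$ together with the $(\mathbf{U}((\g\otimes A)^\Gamma),\A)$-bimodule structure on $W^\Gamma(\lambda)$. The paper itself does not supply a proof but defers to \cite{BCM19}, where the argument is precisely this induction; so your proposal matches the intended approach.
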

\begin{proof}
 The proof follows similarly as in \cite[Lemma 7.4]{BCM19}.
\end{proof}

\begin{lemma}{\label{lem:9}}
As a right $\A$-module, $(\n_{\bar{1}}^{-}\otimes A)^{\Gamma}w_{\lambda}^{\Gamma}$ is finitely generated.
\end{lemma}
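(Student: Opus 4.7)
The plan is to reduce the lemma to finitely many applications of Lemma~\ref{lem:7}. The first observation is that $\n_{\bar{1}}^{-} \subseteq \g^{\Gamma}$ is pointwise fixed by $\Gamma$, so the $\Gamma$-equivariance simplifies to
\[
(\n_{\bar{1}}^{-} \otimes A)^{\Gamma} = \n_{\bar{1}}^{-} \otimes A^{\Gamma}.
\]
By Lemma~\ref{lem:5}, $A^{\Gamma}$ is finitely generated, so fix algebra generators $a_{1}, \ldots, a_{t}$ of $A^{\Gamma}$. Since $R_{\bar{1}}^{+}$ is finite, it then suffices to show that for each $\beta \in R_{\bar{1}}^{+}$ the family $\{(x_{\beta}^{-} \otimes a_{1}^{m_{1}}\cdots a_{t}^{m_{t}})\, w_{\lambda}^{\Gamma} : m_{1}, \ldots, m_{t} \in \N\}$ can be reduced, modulo the right $\A$-action, to a finite subset.

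To achieve this I would invoke Lemma~\ref{lem:7}. Set $\alpha := -\theta$; by the $\mathbf{C}$ condition $\alpha \in R_{\bar{0}}^{+}$, and $x_{\alpha}^{-}$ is a scalar multiple of the lowest root vector of $\g^{\Gamma}$, sitting inside $\g^{\Gamma}_{-2}$. Since $\g^{\Gamma}$ is a simple basic classical Lie superalgebra, its adjoint representation is irreducible, and the lowest root vector generates the entire adjoint module under the action of $\mathbf{U}(\n^{+}_{\Gamma})$. Consequently, each odd negative root vector can be expressed as a finite linear combination
\[
x_{\beta}^{-} \;=\; \sum_{j} c_{\beta, j}\, [y_{1}^{(j)}, [y_{2}^{(j)}, \cdots [y_{k_{j}}^{(j)}, x_{\alpha}^{-}] \cdots ]],
\]
with $c_{\beta, j} \in \C$ and $y_{i}^{(j)} \in \n^{+}_{\Gamma}$.

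Tensoring with $a_{1}^{m_{1}}\cdots a_{t}^{m_{t}}$ and acting on $w_{\lambda}^{\Gamma}$, Lemma~\ref{lem:7} (applied with $\alpha = -\theta$) will place each summand in the right $\A$-span of the analogous elements with reduced exponents $0 \le l_{i} < \lambda(h_{-\theta})$. Recollecting the brackets back to $x_{\beta}^{-}$, it follows that $(x_{\beta}^{-} \otimes a_{1}^{m_{1}}\cdots a_{t}^{m_{t}})w_{\lambda}^{\Gamma}$ lies in the right $\A$-span of the finite collection
\[
\{(x_{\beta}^{-} \otimes a_{1}^{l_{1}}\cdots a_{t}^{l_{t}})\, w_{\lambda}^{\Gamma} : \beta \in R_{\bar{1}}^{+},\; 0 \le l_{1}, \ldots, l_{t} < \lambda(h_{-\theta})\},
\]
which supplies the required finite generating set.

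The main obstacle I anticipate is justifying the bracket presentation of $x_{\beta}^{-}$ cleanly. In particular, one must confirm that $\g^{\Gamma}$ is simple in each case produced by the earlier structural analysis (so that its adjoint representation is indeed irreducible) and keep track of the superbracket signs carefully so that the decomposition of $x_{\beta}^{-}$ really is a $\C$-linear combination of iterated brackets of the exact shape handled by Lemma~\ref{lem:7}. Once this bookkeeping is settled, the remainder of the argument is a direct combination of Lemmas~\ref{lem:5} and~\ref{lem:7}.
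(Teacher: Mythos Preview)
Your very first reduction is where the argument breaks. In this paper $\n^{-}$ denotes the negative part of the \emph{ambient} superalgebra $\g$ (see the decomposition $(\g\otimes A)^{\Gamma}=(\n^{-}\otimes A)^{\Gamma}\oplus(\h\otimes A)^{\Gamma}\oplus(\n^{+}\otimes A)^{\Gamma}$ in Section~5), not of $\g^{\Gamma}$. Hence $\n_{\bar 1}^{-}$ is \emph{not} pointwise fixed by $\Gamma$, and the identity $(\n_{\bar 1}^{-}\otimes A)^{\Gamma}=\n_{\bar 1}^{-}\otimes A^{\Gamma}$ is false in general. For $|\Gamma|=2$ one has instead
\[
(\n_{\bar 1}^{-}\otimes A)^{\Gamma}=\bigl((\n^{-}_{\Gamma})_{\bar 1}\otimes A^{\Gamma}\bigr)\ \oplus\ \bigl(\n^{-}_{1_{\bar 1}}\otimes A_{-1}\bigr),
\]
where $\n^{-}_{1_{\bar 1}}=\{x\in\n^{-}_{\bar 1}\mid \sigma(x)=-x\}$. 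Your argument via Lemma~\ref{lem:7} with $\alpha=\theta$ is exactly the paper's Claim~1 and handles only the first summand; it says nothing about elements of the form $(x_{\beta}^{-}-x_{\sigma(\beta)}^{-})\otimes b$ with $b\in A_{-1}$, which do not lie in $\g^{\Gamma}\otimes A^{\Gamma}$ and to which Lemma~\ref{lem:7} does not directly apply.

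The paper treats this second summand by a genuinely different device (Claim~2): one realizes $(x_{\beta}^{-}-x_{\sigma(\beta)}^{-})\otimes a_{1}^{m_{1}}\cdots a_{s}^{m_{s}}b_{i}$, up to a nonzero scalar, as the commutator of $(h_{\beta}-h_{\sigma(\beta)})\otimes b_{i}\in(\h\otimes A)^{\Gamma}$ with $x_{\alpha}^{-}\otimes a_{1}^{m_{1}}\cdots a_{s}^{m_{s}}\in(\n^{-}_{\Gamma})\otimes A^{\Gamma}$, where $\alpha$ is the simple root of $\g^{\Gamma}$ folding $\beta$ and $\sigma(\beta)$. Then $(h_{\beta}-h_{\sigma(\beta)})\otimes b_{i}$ acts on $w_{\lambda}^{\Gamma}$ through $\A$, and Lemma~\ref{lem:6} controls the $x_{\alpha}^{-}\otimes a_{1}^{m_{1}}\cdots a_{s}^{m_{s}}$ factor, yielding a finite $\A$-generating set for this piece as well. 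Your proposal is missing this entire step; without it the lemma is not proved.
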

\begin{proof}
The automorphism group $\Gamma$ acting on the Lie superalgebra $\g$ and the associative algebra $A$ is of order $m=2$ and hence $(\g\otimes A)^{\Gamma}=\bigoplus_{s=0}^{1}\g_{s}\otimes A_{-s}$. Now consider the $\mathbb{Z}_{2}$-gradation $(\g\otimes A)^{\Gamma}=(\g\otimes A)^{\Gamma}_{\bar{0}}\bigoplus(\g\otimes A)^{\Gamma}_{\bar{1}}=(\g_{\bar{0}}\otimes A)^{\Gamma}\bigoplus(\g_{\bar{1}}\otimes A)^{\Gamma}$. We denote $\g_{s_{\bar{0}}}=\{x\in\g_{\bar{0}}~|~\sigma(x)=\zeta^{s}x\}$ and $\g_{s_{\bar{1}}}=\{x\in\g_{\bar{1}}~|~\sigma(x)=\zeta^{s}x\}$. Hence we get
    \begin{align*}
        (\g\otimes A)^{\Gamma} &=\bigoplus_{s=0}^{1}((\g_{s_{\bar{0}}}\otimes A_{-s})\oplus(\g_{s_{\bar{1}}}\otimes A_{-s}))\\
            &=(\g_{0_{\bar{0}}}\otimes A_{0})\oplus(\g_{0_{\bar{1}}}\otimes A_{0})\oplus(\g_{1_{\bar{0}}}\otimes A_{-1})\oplus(\g_{1_{\bar{1}}}\otimes A_{-1}).
    \end{align*}
In particular, $(\n^{-}_{\bar{1}}\otimes A)^{\Gamma}=(\n^{-}_{0_{\bar{1}}}\otimes A_{0})\oplus(\n^{-}_{1_{\bar{1}}}\otimes A_{-1})$.

{\bf Claim I}: $(\n^{-}_{0_{\bar{1}}}\otimes A_{0})w_{\lambda}^{\Gamma}$ is finitely generated $A_{\lambda}^{\Gamma}$-module. Clearly $(\n^{-}_{0_{\bar{1}}}\otimes A_{0})=((\n^{-}_{\Gamma})_{\bar{1}}\otimes A^{\Gamma})$ and let $-\theta$ denote the lowest root of $\g^{\Gamma}$. We have already seen that the  triangular decomposition for $\g^{\Gamma}$ satisfies the $\mathbf{C}$ condition, i.e. $-\theta \in R_{\bar{0}}$. Since $\g^{\Gamma}$ is assumed to be finite dimensional, there exists $k_{0}\in\mathbb{N}$ such that $[x_{1},[x_{2},\ldots[x_{k},x_{\theta}^{-}]\cdots]]=0$ for all $k>k_{0}$ and $x_{1},\ldots,x_{k}\in\n^{+}_{\Gamma}$. Further $\g^{\Gamma}$ is one of the simple Lie superalgebras and $x_{\theta}^{-}$ is the lowest root, we have
\begin{equation*}
\g^{\Gamma}\subseteq~\mbox{span}\{[x_{1},[x_{2},\ldots[x_{k},x_{\theta}^{-}]\cdots]]~|~x_{1}\ldots,x_{k}\in\n^{+}_{\Gamma}~\mbox{and}~0\leq k\leq k_{0}\}.
\end{equation*}
In particular,
\begin{equation}\label{eq:4}
\n_{\Gamma}^{-}\subseteq~\mbox{span}\{[x_{1},[x_{2},\cdots[x_{k},x_{\theta}^{-}]\cdots]]~|~x_{1},\cdots,x_{k}\in\n^{+}_{\Gamma}~ \mbox{and}~0\leq k\leq k_{0}\}.
\end{equation}
Hence by Lemma $\ref{lem:7}$, it is clear that, for each $\alpha\in R^{+}$, the space $((\g^{\Gamma})_{-\alpha}\otimes A^{\Gamma})w_{\lambda}^{\Gamma}$ is finitely generated $\A$- module. Thus $((\n^{-}_{\Gamma})_{\bar{1}}\otimes A^{\Gamma})w_{\lambda}^{\Gamma}$ is finitely generated $\A$- module.
    
{\bf Claim II:} $(\n^{-}_{1_{\bar{1}}}\otimes A_{-1})w_{\lambda}^{\Gamma}$ is finitely generated $\A$-module. For $\sigma \in \Gamma$, consider \[\mathfrak{B}_{1_{\bar{1}}}=\{x_{\beta}^{-}-x_{\sigma(\beta)}^{-}\mid \beta\in \Phi_{\bar{1}},\sigma(\beta)\neq\beta\}\] to be the set of generators for $\n^{-}_{1_{\bar{1}}}$. A basis of $\n_{0_{\bar{1}}}^{-}$ is consisting of elements \[\mathfrak{B}_{0_{\bar{1}}}=\{x_{\beta}^{-}\mid\beta\in \Phi^{+}_{\bar{1}}, \sigma(\beta)=\beta\}\cup\{x_{\beta}^{-}+x_{\sigma(\beta)}^{-}\mid\beta\in \Phi_{\bar{1}}^{+},\sigma(\beta)\neq\beta\}.\]
From Lemma \ref{lem:5}, we know that $A_{s}$ is finitely generated $A^{\Gamma}$ module. So in particular $A_{-1}$ is finitely generated $A^{\Gamma}$ module. Let $\{b_{1},\ldots, b_{k}\}$ be the finite set of generators for $A_{-1}$ as a $A^{\Gamma}$ module and $\{a_{1},\cdots,a_{s}\}$ be the finite set of generators for $A^{\Gamma}$. For any $\alpha\in\Delta$, we can find vectors such that $x_{\alpha}^{-}=x_{\beta_{\alpha}}^{-}+x_{\sigma(\beta_{\alpha})}^{-}$ or $x_{\alpha}^{-}=x_{\beta_{\alpha}}^{-}$ and $h_{\alpha}=h_{\beta_{\alpha}}+h_{\sigma(\beta_{\alpha})}$ or $h_{\alpha}=h_{\beta_{\alpha}}$ for some $\beta_{\alpha}\in \Phi^{+}$. Choose such an $\alpha$ and let $\beta=\beta_{\alpha}$, $\sigma(\beta)\neq\beta$. Then
\begin{equation*}
        \{(x^{-}_{\beta} -x^{-}_{\sigma(\beta)})\otimes a_{1}^{m_{1} }\cdots a_{s}^{m_{s}}b_{i} \mid m_{j}\geq 0,~ 1\leq i\leq k\}\subseteq(\n^{-}_{1_{\bar{1}}}\otimes A_{-1})\subseteq(\n_{\bar{1}}\otimes A)^{\Gamma}.
    \end{equation*}
Also $(h_{\beta}-h_{\sigma({\beta})}\otimes b_{i})\in(\h\otimes A)^{\Gamma}$. Since $(\h\otimes A)^{\Gamma}w_{\lambda}^{\Gamma}=w_{\lambda}^{\Gamma}\A$, then $(h_{\beta}-h_{\sigma(\beta)}\otimes b_{i})w_{\lambda}^{\Gamma}\in w_{\lambda}^{\Gamma}\A$.
\begin{align*}
[h_{\beta}-h_{\sigma(\beta)}, x_{\alpha}^{-}]&=[h_{\beta}-h_{\sigma(\beta)}, x_{\beta}^{-}+x_{\sigma(\beta)}^{-}]\\
&=[h_{\beta}, x_{\beta}^{-}]+[h_{\beta}, x_{\sigma(\beta)}^{-}]-[h_{\sigma(\beta)}, x_{\beta}^{-}]-[h_{\sigma(\beta)}, x_{\sigma(\beta)}^{-}]\\
&= -\beta(h_{\beta})x^{-}_{\beta}-(\sigma(\beta)(h_{\beta}))(x_{\sigma(\beta)}^{-})+\beta(h_{\sigma(\beta)})(x^{-}_{\beta})+(\sigma(\beta)(h_{\sigma(\beta)}))(x^{-}_{\sigma(\beta)})\\
&=-(h_{\beta}, h_{\beta})x^{-}_{\beta}+(h_{\sigma(\beta)}, h_{\sigma(\beta)})x^{-}_{\sigma(\beta)}+(h_{\sigma(\beta)},h_{\beta})x^{-}_{\beta}-(h_{\beta}, h_{\sigma(\beta)})x^{-}_{\sigma(\beta)}\\
&=-(h_{\beta}, h_{\beta})x^{-}_{\beta}+(\sigma(h_{\beta}),\sigma(h_{\beta}))x^{-}_{\sigma(\beta)}+(\sigma(h_{\beta}), h_{\beta})x^{-}_{\beta}-(h_{\beta}, \sigma(h_{\beta}))x^{-}_{\sigma(\beta)}\\
&=\mathbb{K}(x^{-}_{\beta}-x^{-}_{\sigma(\beta)}).
    \end{align*}
This implies that for all $m_{j}\geq 0$,
\begin{align*}
\mathbb{K}((x_{\beta}^{-}-x_{\sigma(\beta)}^{-})&\otimes a_{1}^{m_{1}}\cdots a_{s}^{m_{s}}b_{i})w_{\lambda}^{\Gamma}
            =(h_{\beta}-h_{\sigma(\beta)}\otimes b_{i})(x_{\alpha}^{-}\otimes a_{1}^{m_1}\cdots a_{s}^{m_s})\w\\&-(x_{\alpha}^{-}\otimes a_{1}^{m_1}\cdots a_{s}^{m_s})(h_{\beta}-h_{\sigma(\beta)}\otimes b_{i})\w \\
            &\in(h_{\beta}-h_{\sigma(\beta)}\otimes b_{i})\mbox{span}\{(x_{\alpha}^{-}\otimes a_{1}^{l_1}\cdots a_{s}^{l_s})\w\A-(x_{\alpha}^{-}\otimes a_{1}^{m_1}\cdots a_{s}^{m_s})\w\A \\
&\subseteq(h_{\beta}-h_{\sigma(\beta)}\otimes b_{i})\mbox{span}\{(x_{\alpha}^{-}\otimes a_{1}^{l_1}\cdots a_{s}^{l_s})\w\A\mid 0\leq l_{i}<\lambda(h_{\alpha})\}\\
&+\mbox{span}\{(x_{\alpha}^{-}\otimes a_{1}^{l_1}\cdots a_{s}^{l_s})\w\A\mid 0\leq l_{i}<\lambda(h_{\alpha})~\forall i\}.
\end{align*}
It is clear from the above two cases that $(\n^{-}_{\bar{1}}\otimes A)^{\Gamma}w_{\lambda}^{\Gamma}$ is a finitely generated $\A$-module. 
\end{proof}
Let $\mathbf{U}(\n^{-}\otimes A)^{\Gamma}=\Sigma_{n\geq 0}\mathbf{U}_{n}(\n^{-}\otimes A)^{\Gamma}$ be the filtration on $\mathbf{U}(\n^{-}\otimes A)^{\Gamma}$ induced from the usual grading of the tensor algebra.
 \begin{lemma}\label{lem:15}
Let $\g$ be the Lie superalgebra and $\g^{\Gamma}$ be the fixed subalgebra having the triangular decomposition satisfying the condition $\mathbf{C}$. Then there exists $n_{0}\in\mathbb{N}$ such that 
     \begin{equation*}
         \mathbf{U}_{n}(\n^{-}\otimes A)^{\Gamma}\w\A=W^{\Gamma}(\lambda), \quad \forall~n\geq n_{0}.
     \end{equation*}
 \end{lemma}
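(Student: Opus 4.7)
The approach is to show that the ascending filtration $V_n := \mathbf{U}_n(\n^- \otimes A)^{\Gamma} \w \A$ of $W^{\Gamma}(\lambda)$ stabilizes. By the PBW theorem applied to the triangular decomposition $(\g \otimes A)^{\Gamma} = (\n^- \otimes A)^{\Gamma} \oplus (\h \otimes A)^{\Gamma} \oplus (\n^+ \otimes A)^{\Gamma}$, together with $(\n^+ \otimes A)^{\Gamma} \w = 0$ and $\mathbf{U}(\h \otimes A)^{\Gamma} \w = \w \A$, one has $W^{\Gamma}(\lambda) = \mathbf{U}((\g\otimes A)^{\Gamma})\w = \bigcup_n V_n$; hence it suffices to produce $n_0$ with $V_{n_0} = W^{\Gamma}(\lambda)$.

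The key step is to show the $\h_\Gamma$-weight support of $W^{\Gamma}(\lambda)$ is finite. Because $W^{\Gamma}(\lambda) \in \mathcal{I}^{\Gamma}$, it is finitely semisimple as a $\g^{\Gamma}_{\bar 0}$-module, so its weight support is invariant under the Weyl group $W_{\bar 0}$ of the reductive Lie algebra $\g^{\Gamma}_{\bar 0}$. Combined with the upper bound $\mathrm{supp}(W^{\Gamma}(\lambda)) \subseteq \lambda - Q^+_{\Gamma}$ coming from $W^{\Gamma}(\lambda) \in \mathcal{I}^{\Gamma}_{\lambda}$, a convex-hull argument adapted to the super setting (using condition $\mathbf{C}$ to handle the odd positive roots that enter $Q^+_{\Gamma}$) shows the support is contained in the finite set $\mathrm{conv}(W_{\bar 0} \cdot \lambda) \cap (\lambda - Q^+_{\Gamma})$. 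Alongside this, the finite generation of $(\n^- \otimes A)^{\Gamma} \w$ as a right $\A$-module, obtained by combining Lemmas \ref{lem:6} and \ref{lem:7} (covering each even negative root space via iterated brackets with the lowest root vector $x_\theta^-$) with Lemma \ref{lem:9} (for the odd part), can be invoked to make the weight-space bookkeeping explicit.

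Let $H := \max\{\het(\lambda - \mu) : \mu \in \mathrm{supp}(W^{\Gamma}(\lambda))\}$. Any pure-$\h_\Gamma$-weight monomial $x_1 \cdots x_n \in \mathbf{U}(\n^- \otimes A)^{\Gamma}$ has total weight $\sum_i \wt(x_i)$ lying in $-Q^+_\Gamma$, and because each $\wt(x_i)$ is a strictly negative element of the root lattice of $\g^{\Gamma}$, the height of $-\sum_i \wt(x_i)$ is at least $n$. For $n > H$ this weight lies outside $\lambda - \mathrm{supp}(W^{\Gamma}(\lambda))$, so the monomial annihilates $\w$. Expanding an arbitrary element of $\mathbf{U}(\n^- \otimes A)^{\Gamma}$ into pure-weight components then yields $V_H = W^{\Gamma}(\lambda)$, and one takes $n_0 = H$. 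The principal obstacle is the finite-weight-support step: classically this rests on full Weyl invariance and dominance, but here only the even Weyl group $W_{\bar 0}$ acts while $Q^+_\Gamma$ mixes even and odd positive roots, so condition $\mathbf{C}$ together with a careful parity analysis of odd root vectors is needed to close the convex-hull argument.
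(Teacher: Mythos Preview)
Your overall strategy---reduce to finiteness of the $\h_\Gamma$-weight support and then bound monomial length by height---is logically sound once the weight support is known to be finite, but the convex-hull step you use to obtain that finiteness is incorrect. The inclusion $\mathrm{supp}\,W^\Gamma(\lambda)\subseteq\mathrm{conv}(W_{\bar 0}\cdot\lambda)$ fails already for $\g^\Gamma=\mathfrak{osp}(3\mid 2)$ with its distinguished system: the simple roots are $\delta-\epsilon$ (odd) and $\epsilon$ (even), the even Weyl group acts by independent sign changes on the $\epsilon$- and $\delta$-coordinates, so $\mathrm{conv}(W_{\bar 0}\cdot\lambda)$ is the rectangle $\{a\epsilon+b\delta:\,|a|\le p,\ |b|\le q\}$ when $\lambda=p\epsilon+q\delta$. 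But $x^-_{\delta-\epsilon}\w$ has weight $(p+1)\epsilon+(q-1)\delta$, which lies outside that rectangle, and this vector is nonzero because the defining relations of $\bar V(\lambda)$ (and hence of $W^\Gamma(\lambda)$) impose no condition on the odd simple root vector. The underlying reason is that the odd simple root $\delta-\epsilon$ is \emph{not} a non-negative real combination of the simple roots of $\g^\Gamma_{\bar 0}$, so $Q^+_\Gamma\not\subseteq\mathbb{R}_{\ge 0}\,Q^+_{\bar 0}$ and the standard dominance/convexity comparison breaks down. Condition~$\mathbf{C}$ does not rescue this: it only pins down the \emph{lowest} root as even, not the behaviour of the odd simple root.

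In fact, finiteness of the weight support is equivalent to the stabilisation you are trying to prove (each implies the other by the height argument you gave), so invoking it without an independent proof is circular. The paper proceeds differently: it does not touch the weight support at all, but uses the PBW factorisation $W^\Gamma(\lambda)=\mathbf{U}(\n^-_{\bar 1}\otimes A)^\Gamma\,\mathbf{U}(\n^-_{\bar 0}\otimes A)^\Gamma\w\A$, observes that $\mathbf{U}(\n^-_{\bar 0}\otimes A)^\Gamma\w$ is a quotient of the global Weyl module for the reductive Lie algebra $(\g_{\bar 0}\otimes A)^\Gamma$ and hence a finitely generated $\A$-module, and then uses Lemma~\ref{lem:9} to get finitely many odd generators $g_1,\dots,g_l\in(\n^-_{\bar 1}\otimes A)^\Gamma$; an induction on the number of odd and even factors then shows the whole module is spanned over $\A$ by finitely many monomials $g_{j_1}\cdots g_{j_s}f_{i_1}\cdots f_{i_t}\w$. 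You already cite exactly these finite-generation lemmas in passing; the fix is to make \emph{them} the engine of the proof rather than the convex-hull claim.
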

 \begin{proof}
$W^{\Gamma}(\lambda)=\mathbf{U}(\n^{-}\otimes A)^{\Gamma}\w\A$. Then by PBW theorem,
     \begin{equation*}
         W^{\Gamma}(\lambda)=\mathbf{U}(\n^{-}_{\bar{1}}\otimes A)^{\Gamma}\mathbf{U}(\n^{-}_{\bar{0}}\otimes A)^{\Gamma}\w\A.
     \end{equation*}
    $\mathbf{U}(\n_{\bar{0}}^{-}\otimes A)^{\Gamma}\w$ is a $(\g_{\bar{0}}\otimes A)^{\Gamma}$-submodule of $W^{\Gamma}(\lambda)$ generated by $\w$. Clearly, it is the quotient of the Weyl $(\g_{\bar{0}}\otimes A)^{\Gamma}$-module of highest weight $\lambda$. That is, it is the quotient of the global Weyl module corresponding to the reductive Lie algebra $\g_{\bar{0}}$. Hence it is clearly a finitely generated $\A$-module and by \cite[Theorem 5.10 ]{FMS15}, there exist $f_{1},\ldots,f_{k}\in\n_{\bar{0}}^{-}\otimes A$ such that 
    \begin{equation*}
        \mathbf{U}(\n_{\bar{0}}^{-}\otimes A)^{\Gamma}\w\A=\sum_{1\leq i_{1}\leq\cdots\leq i_{t}\leq k}f_{i_{1}}\cdots f_{i_{t}}\w\A.
    \end{equation*}
From Lemma \ref{lem:9} , we get that $(\n_{\bar{1}}^{-}\otimes A)^{\Gamma}\w$ is a finitely generated $\A$-module and hence there exists $g_{1},\ldots,g_{l}\in(\n_{\bar{1}}^{-}\otimes A)^{\Gamma}$ such that 
\begin{equation*}
(\n^{-}_{\bar{1}}\otimes A)^{\Gamma}\w\A=\sum_{1\leq j_{1}\leq\cdots\leq j_{s}\leq l}g_{j_{1}}\cdots g_{j_s}\w\A.
\end{equation*}
Using induction on $t$ and $s$, we get 
    \begin{equation*}
        \mathbf{U}(\n_{\bar{1}}^{-}\otimes A)^{\Gamma}\mathbf{U}(\n^{-}_{\bar{0}}\otimes A)^{\Gamma}\w\A= \sum_{\substack{1\leq i_{1}\leq\cdots\leq i_{t}\leq k,\\ 1\leq j_{1}\leq\cdots\leq j_{s}\leq l}}
        g_{j_{1}}\cdots g_{j_s}f_{i_{1}}\cdots f_{i_{t}}\w\A.
    \end{equation*}
 \end{proof}
\begin{thm}\label{thm:1}
Let $\g$ be the Lie superalgebra and $\g^{\Gamma}$ be the fixed subalgebra with triangular decomposition satisfying the condition $\mathbf{C}$. For all $\lambda\in \Lambda^{+}$, the global Weyl module $W^{\Gamma}(\lambda)$ is finitely generated as a right $\A$-module.
\end{thm}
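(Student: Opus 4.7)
The plan is to combine the filtration bound established in the preceding lemma with the exponent bounds provided by Lemmas \ref{lem:6}, \ref{lem:7}, and \ref{lem:9}. The preceding lemma supplies $n_0 \in \N$ with $W^{\Gamma}(\lambda) = \mathbf{U}_{n_0}(\n^- \otimes A)^{\Gamma} \w \A$, so it is enough to produce a finite subset of $\mathbf{U}_{n_0}(\n^- \otimes A)^{\Gamma} \w$ whose right $\A$-translates cover every PBW-monomial of degree at most $n_0$ acting on $\w$.

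First I would fix the finite combinatorial data: a finite algebra generating set $a_1, \ldots, a_r$ of $A^{\Gamma}$ together with finite $A^{\Gamma}$-module generators of each homogeneous component $A_{-s}$, both furnished by Lemma \ref{lem:5}. Applying PBW to the decomposition $\n^- \otimes A = (\n^-_{\bar{0}} \otimes A) \oplus (\n^-_{\bar{1}} \otimes A)$, any degree-$\leq n_0$ monomial of $(\n^-\otimes A)^{\Gamma}$ acting on $\w$ can be reordered into the canonical shape $g_{j_1}\cdots g_{j_q}\, f_{i_1}\cdots f_{i_p}\w$ with $p+q \leq n_0$, where the $f_i$'s are even root vectors of the form $x_{\alpha}^- \otimes a_1^{m_1}\cdots a_r^{m_r}$ and the $g_j$'s are odd elements of $(\n^-_{\bar{1}}\otimes A)^{\Gamma}$. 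Lemmas \ref{lem:6} and \ref{lem:7} then force each even factor's exponents into the range $m_j < \lambda(h_{\alpha})$ modulo the right $\A$-action, leaving only finitely many $f_i$'s since the root set $R^+_{\bar 0}$, the generator set $\{a_1,\ldots,a_r\}$, and the admissible exponent tuples are all finite. Lemma \ref{lem:9} plays the analogous role on the odd side, providing a finite $\A$-generating set of $(\n^-_{\bar{1}}\otimes A)^{\Gamma}\w$; since distinct odd elements anticommute in $\mathbf{U}$, each ordered product $g_{j_1}\cdots g_{j_q}$ with distinct indices yields only finitely many classes at each length $q$, while any repeated $g_j^2 = \tfrac{1}{2}[g_j,g_j]$ collapses via the super-bracket to an even element of strictly smaller odd-degree.

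The main obstacle I expect is handling the commutators that appear when anticommuting an odd $g_j$ past an even $f_i$ to reach the canonical ordering: each swap creates an additional term $[f_i,g_j] \in \n^- \otimes A$ whose total PBW degree is strictly smaller, so the reordering cannot be carried out in one step and must be done by induction on the total degree $n \leq n_0$. In this induction the base case $n = 0$ is trivial (the degree-zero piece is just $\w \A$), and the inductive step re-invokes Lemmas \ref{lem:6}, \ref{lem:7}, and \ref{lem:9} to process both the reordered term and each lower-degree commutator correction. Once this induction is in place, one assembles the finite generating sets produced at each degree $0, 1, \ldots, n_0$ into a single finite generating set, and the theorem follows.
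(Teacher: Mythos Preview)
Your proposal is correct and follows essentially the same strategy as the paper: both invoke the preceding lemma for the filtration bound $W^{\Gamma}(\lambda)=\mathbf{U}_{n_0}(\n^{-}\otimes A)^{\Gamma}\w\A$, set up a finite set of bounded-exponent generators coming from Lemmas \ref{lem:6}, \ref{lem:7}, and \ref{lem:9}, and then run an induction on the filtration degree using the supercommutator identity $uu'\w=[u,u']\w+(-1)^{|u||u'|}u'u\w$ to trade reorderings for lower-degree terms. The only cosmetic difference is that the paper packages the even and odd bounded-exponent elements into a single finite set $\mathfrak{D}$ and inducts uniformly on products from $\mathfrak{D}$, whereas you first reorder into odd-then-even PBW form and treat the two parities separately; this does not affect the argument.
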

 \begin{proof}
We prove that $\mathbf{U}_{n}(\n^{-}\otimes A)^{\Gamma}\w\A$ is a finitely generated $\A$-module for every $n\geq 0$. Recall that $A^{\Gamma}$ is a finitely generated algebra and let $\{a_{1},\ldots a_{t}\}$ be the set of generators for $A^{\Gamma}$. Just as we had defined $\mathfrak{B}_{0_{\bar{1}}}$ and $\mathfrak{B}_{1_{\bar{1}}}$ as the basis for $\n^{-}_{0_{\bar{1}}}$ and $\n^{-}_{1_{\bar{1}}}$ respectively, we define $\mathfrak{B}_{0_{\bar{0}}}$ to be the basis of $\n_{0_{\bar{0}}}^{-}$, obtained from the right side of \eqref{eq:4}, and $\mathfrak{B}_{1_{\bar{0}}}=\{x^{-}_{\alpha}-x^{-}_{\sigma(\alpha)}\mid\alpha\in\Phi_{\bar{0}},\sigma(\alpha)\neq\alpha\}$ to be the basis for $\n^{-}_{1_{\bar{0}}}$. Define
     \begin{align*}
\mathcal{D}_{0_{\bar{1}}}&=\{x\otimes a_{1}^{l_1}\cdots a_{s}^{l_s}~|~x\in \mathfrak{B}_{0_{\bar{1}}},~0\leq l_{j}<\lambda(h_{\alpha})~\forall j\}\\
\mathcal{D}_{1_{\bar{1}}}&=\{x\otimes a_{1}^{l_1}\cdots a_{s}^{l_s}b_{i}~|~x\in \mathfrak{B}_{1_{\bar{1}}},~0\leq l_{j}<\lambda(h_{\alpha})~\forall j\quad 1\leq i\leq k\}\\
\mathcal{D}_{0_{\bar{0}}}&=\{x\otimes a_{1}^{l_1}\cdots a_{s}^{l_s} \mid x\in \mathfrak{B}_{0_{\bar{0}}},~0\leq l_{j}<\lambda(h_{\alpha})~\forall j\}\\ 
\mathcal{D}_{1_{\bar{0}}}&=\{x\otimes a_{1}^{l_1}\cdots a_{s}^{l_s}b_{i}~|~x\in \mathfrak{B}_{1_{\bar{0}}},~0\leq l_{j}<\lambda(h_{\alpha})~\forall j\quad 1\leq i\leq k\}. 
\end{align*}
Let $\mathfrak{D}=\mathcal{D}_{0_{\bar{1}}}\cup\mathcal{D}_{1_{\bar{1}}}\cup\mathcal{D}_{0_{\bar{0}}}\cup\mathcal{D}_{1_{\bar{0}}}$. Clearly, this forms the basis for $(\n^{-}\otimes A)^{\Gamma}$. Using induction, we claim that 
\begin{equation*}
         \mathbf{U}_{n}(\n^{-}\otimes A)^{\Gamma}\w\subseteq \mathrm{span}\{Y_{1}^{n_1}\cdots Y_{t}^{n_t}\w\A\mid t\geq 0,~Y_{1}\cdots Y_{t}\in\mathfrak{D} ~\mathrm{and}~
         n_{1}+\cdots +n_{t}\leq n\}.
     \end{equation*}
The case for $n=0$ is trivial. For $n=1$ it is clear from the definition of $\mathfrak{D}$ that it is true. We assume that it is true for $n\geq 1$. Let $u\in\mathbf{U}_{1}(\n^{-}\otimes A)^{\Gamma}$ and $u'\in\mathbf{U}_{n}(\n^{-}\otimes A)^{\Gamma}$. Then by assumption $u'\w\in\mathrm{span}\{Y_{1}^{n_1}\cdots Y_{t}^{n_t}\w\A~|~t\geq 0,~ Y_{1}\cdots Y_{t}\in \mathfrak{D} ~\mbox{and}\quad n_{1}+\cdots n_{t}\leq n\}$. Then we have 
\begin{align*}
uu'\w &=[u,u']\w+(-1)^{{|u|}{|u'|}}u'u\w \\
&\in\mathbf{U}_{n}(\n^{-}\otimes A)^{\Gamma}\w\A+\mbox{Span}\{u'Y\w\A\mid  Y\in\mathfrak{D}\} \\
&\subseteq\mathrm{span}\{Y_{1}^{n_1}\cdots Y_{t+1}^{n_{t+1}}\w\A\mid t\geq 0,\;\; Y_{1},\cdots,Y_{t+1}\in \mathfrak{D}\;\;\mbox{and}\;\; n_{1}+\cdots+n_{t+1}\leq n+1\}.
\end{align*}
This shows that $\mathbf{U}_n(\n^{-}\otimes A)^{\Gamma}\w\A$ is a finitely generated $\A$-module. From Lemma \ref{lem:15}, we have seen that there exists $n_{0}\in\mathbb{N}$, such that $W^{\Gamma}(\lambda)=\mathbf{U}_{n}(\n^{-}\otimes A)^{\Gamma}\w\A$ for all $n\geq n_{0}$. Hence the result follows.
\end{proof}
The following corollary follows directly from Theorem \ref{thm:1}.
\begin{cor}\label{cor:7.9}
Let $\g$ be the Lie superalgbera and $\g^{\Gamma}$ be the fixed subalgebra with triangular decomposition that satisfies condition $\mathbf{C}$. If $M$ is a finitely generated $\A$-module (resp. finite dimensional), then $\mathbf{W}_{\lambda}^{\Gamma}M$ is a finitely generated (resp. finite dimensional) $(\g\otimes A)^{\Gamma}$-module.
\end{cor}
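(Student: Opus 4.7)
The plan is to unpack the definition $\mathbf{W}_{\lambda}^{\Gamma}M = W^{\Gamma}(\lambda)\otimes_{\A}M$ and exploit the finite generation of $W^{\Gamma}(\lambda)$ as a right $\A$-module supplied by Theorem \ref{thm:1}. The $(\g\otimes A)^{\Gamma}$-action on $\mathbf{W}_{\lambda}^{\Gamma}M$ is on the left tensor factor only, so the key is to translate a finite generating set of $W^{\Gamma}(\lambda)$ over $\A$ into a finite generating set of $\mathbf{W}_{\lambda}^{\Gamma}M$ over $(\g\otimes A)^{\Gamma}$, using that $W^{\Gamma}(\lambda)$ itself is cyclic as a $(\g\otimes A)^{\Gamma}$-module with generator $\w$.

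For the finitely generated case, suppose $M$ is generated as an $\A$-module by $m_{1},\dots,m_{k}$. For any simple tensor $w\otimes m$ in $W^{\Gamma}(\lambda)\otimes_{\A}M$, write $m=\sum a_{j}m_{j}$ with $a_{j}\in\A$ and use the balancing over $\A$ to rewrite $w\otimes m=\sum (w\cdot a_{j})\otimes m_{j}$. Because $W^{\Gamma}(\lambda)$ is a cyclic $(\g\otimes A)^{\Gamma}$-module generated by $\w$, each $w\cdot a_{j}$ is of the form $u_{j}\w$ for some $u_{j}\in\mathbf{U}((\g\otimes A)^{\Gamma})$, hence $w\otimes m=\sum u_{j}\cdot(\w\otimes m_{j})$. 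This exhibits $\{\w\otimes m_{1},\dots,\w\otimes m_{k}\}$ as a finite generating set for $\mathbf{W}_{\lambda}^{\Gamma}M$ over $(\g\otimes A)^{\Gamma}$.

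For the finite-dimensional case, I would first refine the right $\A$-generators of $W^{\Gamma}(\lambda)$ to weight vectors. The right action of $\A$ preserves $\h_{\Gamma}$-weight spaces (since $\A$ is a quotient of the commutative subalgebra $\mathbf{U}(\h\otimes A)^{\Gamma}$, whose action commutes with $\h_{\Gamma}$), so any finite generating set can be replaced by a finite set of weight vectors $w_{1},\dots,w_{n}$ of weights $\mu_{1},\dots,\mu_{n}$. Consequently $\wt(W^{\Gamma}(\lambda))\subseteq\{\mu_{1},\dots,\mu_{n}\}$ is finite, and each weight space $W^{\Gamma}(\lambda)_{\mu}$ is a finitely generated right $\A$-module. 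Using the weight decomposition $(\mathbf{W}_{\lambda}^{\Gamma}M)_{\mu}=W^{\Gamma}(\lambda)_{\mu}\otimes_{\A}M$ from the definition of the Weyl functor, and the elementary fact that if a right $\A$-module is generated by $\ell$ elements then tensoring over $\A$ with a finite-dimensional left $\A$-module $M$ yields a space of dimension at most $\ell\cdot\dim_{\C}M$, each weight space of $\mathbf{W}_{\lambda}^{\Gamma}M$ is finite dimensional. Combining this with the finiteness of $\wt(W^{\Gamma}(\lambda))$ gives $\dim_{\C}\mathbf{W}_{\lambda}^{\Gamma}M<\infty$.

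The main obstacle, such as it is, lies in the finite-dimensional half: one must verify that the generators of $W^{\Gamma}(\lambda)$ over $\A$ can be chosen as weight vectors (so that the weight set is finite) and that each weight space is itself finitely generated over $\A$. Both points reduce to the compatibility of the right $\A$-action with the weight grading, which follows because the bimodule structure on $W^{\Gamma}(\lambda)$ is built from commuting actions of $\mathbf{U}((\g\otimes A)^{\Gamma})$ and of the commutative algebra $\A$ arising from $\mathbf{U}(\h\otimes A)^{\Gamma}$; once this is in hand, the corollary follows without further work from Theorem \ref{thm:1}.
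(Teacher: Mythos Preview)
Your argument is correct, and it takes a different route from the paper's. The paper's proof chooses $\A$-generators $w_{1},\dots,w_{k}$ of $W^{\Gamma}(\lambda)$ from Theorem~\ref{thm:1} and then asserts $W^{\Gamma}(\lambda)\cong\bigoplus_{i=1}^{k}\A$, whence $\mathbf{W}_{\lambda}^{\Gamma}M\cong\bigoplus_{i=1}^{k}M$; both conclusions are then read off from this. The displayed isomorphism is not justified (finite generation does not give freeness), although the finite-dimensional half can be salvaged by replacing it with the surjection $\A^{k}\twoheadrightarrow W^{\Gamma}(\lambda)$ of right $\A$-modules and tensoring to get a vector-space surjection $M^{k}\twoheadrightarrow\mathbf{W}_{\lambda}^{\Gamma}M$.

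Your treatment of the finitely generated case is both cleaner and sharper: you do not invoke Theorem~\ref{thm:1} at all, relying only on cyclicity of $W^{\Gamma}(\lambda)$ over $\mathbf{U}((\g\otimes A)^{\Gamma})$ and the bimodule balancing to exhibit $\{\w\otimes m_{j}\}$ as generators. The paper's line of reasoning, even repaired as above, does not obviously yield finite generation over $(\g\otimes A)^{\Gamma}$, since the surjection $M^{k}\twoheadrightarrow\mathbf{W}_{\lambda}^{\Gamma}M$ is only $\C$-linear. For the finite-dimensional half you do use Theorem~\ref{thm:1}, but through the weight decomposition and the (correct) observation that the right $\A$-action commutes with $\h_{\Gamma}$ and hence preserves weight spaces; this reaches the same endpoint as the paper's intended surjection argument with a bit more bookkeeping but no unjustified steps.
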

\begin{proof}
From Theorem \ref{thm:1}, we know that $W^{\Gamma}(\lambda)$ is a finitely generated $\A$-module. This implies that there exists a finite set of generators $\{w_{1},\cdots,w_{k}\}$ for $W^{\Gamma}(\lambda)$ as an $\A$-module. Hence, for any $w\in W^{\Gamma}(\lambda)$, we have
\[w =c_{1}w_{1}+c_{2}w_{2}+\cdots+c_{k}w_{k},\quad c_{1},\cdots,c_{k}\in\A\] implies
    \begin{align*}
 W^{\Gamma}(\lambda)&=w_{1}\A\oplus w_{2}\A\oplus\cdots\oplus w_{k}\A\\
&\cong\bigoplus_{i=1}^{k}w_{i}\A.
    \end{align*}
From the above isomorphism, we find the surjective map
    \begin{equation}\label{eq:12}
        \bigoplus_{i=1}^{k}\A\rightarrow W^{\Gamma}(\lambda)
    \end{equation}
such that for every $w\in W^{\Gamma}(\lambda)$ we have,
\begin{equation*}
        (c_{1},\ldots, c_{k})\rightarrow w.
    \end{equation*}
Let M be a finitely generated $\A$-module (resp. finite dimensional). We have already seen from the definition of Weyl functor that 
$\mathbf{W}_{\lambda}^{\Gamma}M=W^{\Gamma}(\lambda)\otimes_{\A} M$. Then extending \eqref{eq:12} we get the surjective map,
\begin{equation}\label{eq:13}
       \bigoplus_{i=1}^{k}\A\otimes_{\A} M\rightarrow W^{\Gamma}(\lambda)\otimes_{\A} M.
   \end{equation}
Since $\A\otimes M\cong M$ as the $\A$-module, \eqref{eq:13} becomes the surjective map
\begin{equation*}
       \bigoplus_{i=1}^{k}M\rightarrow W^{\Gamma}(\lambda)\otimes_{\A} M.
\end{equation*}
We have $M$ is finitely generated (resp. finite dimensional), and $\mathbf{W}_{\lambda}^{\Gamma}M$ is a quotient of finite copies of finitely generated (resp. finite dimensional) $\A$-module. So $\mathbf{W}_{\lambda}^{\Gamma}M$ is finitely generated (resp. finite dimensional) $(\g\otimes A)^{\Gamma}$-module.
\end{proof}
\begin{rem}\label{rem:1}
If $\Gamma$ is trivial, then we can recover \cite[Corollary 7.10]{BCM19}.
\end{rem}
\section{Twisted Local Weyl modules}\label{sec:8}
 In this section, we introduce the twisted local Weyl modules, in terms of generators and relations and prove that, they are finite dimensional withrespect to any triangular decomposition of $\g^{\Gamma}$. Here we assume that $A$ is a finitely generated commutative associative unital $\mathbb{K}-$algebra.
\begin{defn}\label{def:7}
Let $\psi\in ((\h\otimes A)^{\Gamma})^{*}$ and $\psi|_{\h^{\Gamma}}=\lambda\in \Lambda^{+}$. The local Weyl module $W^{\Gamma}_{loc}(\psi)$ associated to $\psi$ is defined by the cyclic $(\g\otimes A)^{\Gamma}$-module given as the quotient of $\mathbf{U}(\g\otimes A)^{\Gamma}$ by the left ideal $I$ generated by 
\begin{equation*}
    (\n^{+}\otimes A)^{\Gamma},\quad h-\psi(h),\quad (x_{\alpha}^{-})^{\psi(h_{\alpha})+1},\quad \mbox{for all}~h\in (\h\otimes A)^{\Gamma} \mbox{and}~\alpha\in\Delta_{\bar{0}}.
\end{equation*}
Hence, we get a map $\hat{f}:\mathbf{U}(\g\otimes A)^{\Gamma}\rightarrow\mathbf{U}(\g\otimes A)^{\Gamma}/I$ such that $\hat{f}(1)=w_{\psi}^{\Gamma}$, where $w_{\psi}^{\Gamma}$ is the element that generates $W_{loc}^{\Gamma}(\psi)$ as a $(\g\otimes A)^{\Gamma}$-module. This vector $w_{\psi}^{\Gamma}$ satisfies the following defining relations:
\begin{equation}\label{eq:8}
    (\n^{+}\otimes A)^{\Gamma}w_{\psi}^{\Gamma}=0,\;\; hw_{\psi}^{\Gamma}=\psi(h)w_{\psi}^{\Gamma}, \;\; (x_{\alpha}^{-})^{\psi(h_{\alpha})+1}w_{\psi}^{\Gamma}=0, \;\; \mbox{for all}~h\in(\h\otimes A)^{\Gamma}~\mbox{and}~\alpha\in\Delta_{\bar{0}}.
\end{equation}
\end{defn}
From the defining relations in \eqref{eq:8} and the PBW theorem, it is clear that $W^{\Gamma}_{loc}(\psi)=\mathbf{U}(\n^{-}\otimes A)^{\Gamma}w_{\psi}^{\Gamma}$. If $\Gamma$ is trivial then the definition coincides with the untwisted local Weyl module as discussed in \cite{CLS19}.

\begin{defn}(Highest map weight module)
    A $(\g\otimes A)^{\Gamma}$-module generated by a vector $v_{\psi}$ satisfying the first and second relations in \eqref{eq:8} is called a highest map-weight module with highest map-weight $\psi$. The vector $v_{\psi}$ is called a highest map-weight vector of map-weight $\psi$. 
\end{defn}
Since $\A$ is a commutative associative algebra, every irreducible $\A$-module is a one dimensional module. For $\psi\in((\h\otimes A)^{\Gamma})^{*}$ such that $\psi|_{\h_{\Gamma}}=\lambda\in \Lambda^{+}$, let $\mathbb{K}_{\psi}$ denote the irreducible one dimensional $\A$-module, where $xv=\psi(x)v$ for all $x\in\A$ and $v\in\mathbb{K}_{\psi}$.
\begin{rem}
Clearly $W^{\Gamma}_{loc}(\psi)$ satisfies all the defining relations as in \eqref{eq:3}. By the proof of Proposition \ref{prop:1}, we get $\LW$ is a quotient of $W^{\Gamma}(\lambda)$. As the category $\mathcal{I}_{\lambda}^{\Gamma}$ is closed under taking the quotient, $\LW\in\mathcal{I}_{\lambda}^{\Gamma}$.
\end{rem}
\begin{rem}\label{rem8.3}
We have already defined the twisted restriction functor $\R$. Let us see what happens when this functor acts on the local Weyl module. As defined earlier, the local Weyl module $W_{loc}^{\Gamma}(\psi)$ is generated by the vector $\lw$ such that $(\n^{+}\otimes A)^{\Gamma}\lw=0$ and $h\lw=\psi(h)\lw$ for all $h\in(\h\otimes A)^{\Gamma}$. Then $\R\LW=(\LW)_{\lambda}=\mathbb{K}\lw$. Notice that $\mathbb{K}\lw$ is isomorphic to $\mathbb{K}_{\psi}$ as $\A$-modules.   
\end{rem}
\smallskip
In the next result we will describe local weyl modules using Weyl functors.
\begin{thm}\label{thm:8.4}
Let $\g$ be the Lie superalgebra and let $\psi\in((\h\otimes A)^{\Gamma})^{*}$ such that $\psi|_{\h^{\Gamma}}=\lambda\in\Lambda^{+}$. Then $\mathbf{W}_{\lambda}^{\Gamma}\mathbb{K}_{\psi}\cong\LW$ as $(\g \otimes A)^{\Gamma}$- modules.
\end{thm}
\begin{proof}
By Remark \ref{rem8.3}, we have $\R\LW=\mathbb{K}\lw$. Thus, there exists a unique homomorphism of $(\g\otimes A)^{\Gamma}$-modules $\epsilon_{\LW}:\mathbf{W}_{\lambda}^{\Gamma}\R\LW\rightarrow\LW$ satisfying
    \begin{equation*}
        \epsilon_{\LW}(uw_{\lambda}^{\Gamma}\otimes\lw)=u\lw \quad \mbox{for all}\quad u\in\mathbf{U}(\g\otimes A)^{\Gamma}.
    \end{equation*}
Clearly $\epsilon_{\LW}$ is a surjective map. By definition $\mathbf{W}_{\lambda}^{\Gamma}\R\LW$ is a 
$(\g\otimes A)^{\Gamma}$-module generated by the highest weight vector $1\otimes\lw$, and using Corollary \ref{cor:7.9}, it is finite dimensional. In addition, $1\otimes \lw$ satisfies the relations \eqref{eq:8}. Hence, we get a surjective map of $(\g\otimes A)^{\Gamma}$-modules $\eta:\LW\rightarrow \mathbf{W}_{\lambda}^{\Gamma}\R\LW$ satisfying $\eta(\lw)=1\otimes\lw$. Therefore, $\eta\circ\epsilon_{\LW}=id_{{\mathbf{W}_{\lambda}^{\Gamma}\R\LW}}$ and $\epsilon_{\LW}\circ\eta=id_{\LW}$.
\end{proof}

\begin{thm}
Let $\psi\in((\h\otimes A)^{\Gamma})^{*}$ with $\psi|_{\h_{\Gamma}}\in\Lambda^{+}$. Let $\g$ be the Lie superalgebra such that $\g^{\Gamma}$ has a triangular decomposition satisfying the condition $\mathbf{C}$. Then the local Weyl module is finite dimensional.
\end{thm}
\begin{proof}
    The proof follows directly from Theorem \ref{thm:8.4} and Corollary \ref{cor:7.9}.
\end{proof}
\begin{prop}
Let $\psi\in((\h\otimes A)^{\Gamma})^{*}$ be such that $\psi|_{\h_{\Gamma}}=\lambda\in\Lambda^{+}$. If $\g^{\Gamma}$ has the triangular decomposition satisfying the condition $\mathbf{C}$, then the local Weyl module $\LW$, is the unique (up to isomorphism) finite dimensional object of $\mathcal{I}_{\lambda}^{\Gamma}$ that is generated by a highest map-weight vector of  map-weight $\psi$ and admits a surjective homomorphism to any finite dimensional object of $\mathcal{I}_{\lambda}^{\Gamma}$ also generated by a highest map-weight vector of map-weight $\psi$.
\end{prop}
\begin{proof}
Let $V$ be a finite dimensional object of $\mathcal{I}_{\lambda}^{\Gamma}$ that is generated by a highest map-weight vector $v$ of map-weight $\psi$. It follows from the definition of a highest map-weight module that the vector $v$ satisfies the first two relations in \eqref{eq:8}. Since $\g_{\bar{0}}^{\Gamma}$ is finite dimensional, the $\g_{\bar{0}}^{\Gamma}$ module generated by the vector $v$ should also be finite dimensional. Hence it satisfies the third relation in \eqref{eq:8}. Therefore, there exists a surjective homomorphism $\LW\rightarrow V$ mapping $\lw\rightarrow v$. To show that $\LW$ is the unique finite dimensional object with this property, let us assume that $W$ is any other finite dimensional $(\g\otimes A)^{\Gamma}$-module in $\mathcal{I}_{\lambda}^{\Gamma}$ with the given property. Then we will get surjective maps $\LW\rightarrow W$ and $W\rightarrow\LW$. Hence $W$ is a quotient of $\LW$ and vice-versa. Since both are finite dimensional, we get $\LW\cong W$. 
\end{proof}
\begin{cor}
Let $\psi\in((\h\otimes A)^{\Gamma})^{*}$ be such that $\psi|_{\h_{\Gamma}}=\lambda\in\Lambda^{+}$. If the triangular decomposition for $\g^{\Gamma}$ satisfies condition $\mathbf{C}$, then the local Weyl module $\LW$ is the maximal quotient of the global Weyl module $W^{\Gamma}(\lambda)$ that is a highest map-weight module of highest map-weight $\psi$.
\end{cor}
Consider the Lie superalgebra $\g$ with the fixed subalgebra $\g^{\Gamma}$ having a triangular decomposition not necessarily satisfy condition 
$\mathbf{C}$. Then we give a necessary and sufficient condition for the local Weyl module to be finite dimensional.
\begin{lemma}[\cite{FMS15}]
If $J=\oplus_{\xi\in\Xi}J_{\xi}$ is a $\Gamma-$invariant ideal of $A$ then $A_{\tau}J_{\xi}=J_{\tau+\xi}$ for all $\tau,\xi\in\Xi$.
\end{lemma}
\begin{lemma}[\cite{FMS15}]\label{lem:17}
Suppose that $J_{0}$ is an ideal of $A_{0}=A^{\Gamma}$. Then the ideal of $(\g\otimes A)^{\Gamma}$ generated by $\g^{\Gamma}\otimes J_{0}$ is $(\g\otimes J)^{\Gamma}$, where $J=\oplus_{\xi\in\Xi}A_{\xi}J_{0}$ is the ideal of $A$ generated by $J_{0}$.
\end{lemma}
Let $\g$ be the Lie superalgebra and $W_{A}^{loc}(\psi)$ be the untwisted local Weyl module with the generator $w_{\psi}$. Consider $I_{\alpha}$ to be the kernel of the linear map
\begin{equation*}
    A\longrightarrow \mbox{Hom}_{\mathbb{K}}(\g_{-\alpha}\otimes W_{A}^{loc}(\psi)_{\lambda},(\g_{\alpha}\otimes A)w_{\psi}).
\end{equation*}
\begin{equation*}
    a\longmapsto (u\otimes v\mapsto(u\otimes a)v), \quad a\in A, u\in\g_{-\alpha}, v\in W_{A}^{loc}(\psi)_{\lambda}.
\end{equation*}
The $I_{\alpha}$ so defined will be a finite codimensional ideal of $A$. If $I=\cap_{\alpha\in\Phi_{\bar{0}}^{+}}I_{\alpha}$, then it can be seen that $(\n_{\bar{0}}^{-}\otimes I)W_{A}^{loc}(\psi)_{\lambda}=0$. Since $\Phi_{\bar{0}}^{+}$ is a finite set and $I$ is the intersection of finitely many finite co-dimensional ideals, we find that $I$ is also a finite co-dimensional ideal of $A$. Then we define, for $\psi\in(\h\otimes A)^{*}$ with $\psi|_{\h}\in P^{+}$, $I_{\psi}$ to be the sum of all the ideals $I\subseteq A$ such that $(\n_{\bar{0}}^{-}\otimes I)w_{\psi}=0$.
Let $w_{\psi'}$ be the generator of the untwisted local Weyl module and $w_{\psi'}^{\g}=\{x\in\g \mid (x\otimes a)w_{\psi'}=0,~ \forall a\in A\}$. Note $w_{\psi'}^{\g}$ is a subalgebra of $\g$ and $\n^+ \subseteq w_{\psi'}^{\g}$ \cite{BCM19}.
\begin{lemma}[Lemma 8.11 in \cite{BCM19}] \label{lem:16}
Let $\g$ be a finite dimensional simple Lie superalgebra not of type $\q(n)$, and $\psi'\in(\h\otimes A)^{*}$ be such that $\psi'|_{\h}=\lambda \in P^+$. If $X_{\theta}^{-}$, which is the lowest weight in $\g$, is in the $\g_{\bar{0}}$-submodule of $\g$ generated by $w_{\psi'}^{\g}$, then there exists $n_{\psi'}\in\mathbb{N}$ such that $(\g \otimes I_{\psi'}^{n_{\psi'}})w_{\psi'}=0$.
\end{lemma}
\begin{lemma}\label{lem:8.9}
Let $\g$ be the Lie superalgebra. For $\psi\in((\h\otimes A)^{\Gamma})^{*}$ such that $\psi|_{\h_{\Gamma}}=\lambda\in\Lambda^{+}$, there exists a finite co-dimensional ideal $I$ such that $(\g\otimes I^{n})^{\Gamma}$ annihilates $\LW$ for some positive integer $n\in\mathbb{N}$.
\end{lemma}
\begin{proof}
We have already seen that the Lie superalgebra $\g$ has a $\mathbb{Z}_{2}$ gradation $\g=\g_{0}\oplus\g_{1}$ because of the action of $\Gamma$ on $\g$(here $\g_{0}=\g^{\Gamma}$ and $\g_{1}$ is the set of elements in $\g$ having eigen value $-1$). Under this gradation $\g_{1}$ is going to be an irreducible $\g_{0}$ module \cite{vdL}. Using Lemma \ref{lem:3}, we know that an irreducible finite dimensional $\g_{0}$ module is going to be a highest weight module. Let $x^{+}_{\alpha}$ be the highest root vector corresponding to the highest weight $\alpha$ implies
\begin{equation*}
\n_{0}^{+}x_{\alpha}^{+}=[\n_{0}^{+},x_{\alpha}^{+}]=0.
\end{equation*}
Since $\g_{1}$ is an irreducible $\g_{0}$ module with highest weight $\alpha$, we have
\begin{equation*}
         \g_{1}=\mbox{span}\{[x_{1},[\cdots[x_{k},x_{\alpha}^{+}]]]~|~ x_{1},x_{2},\cdots, x_{k}\in\n_{0}^{-}\}.
\end{equation*}
An ideal $(\g\otimes I)^{\Gamma}$ for $(\g\otimes A)^{\Gamma}$ has the form $(\g_{0}\otimes I_{0})\oplus(\g_{1}\otimes I_{-1})$, where $I_{-1}=A_{-1}I_{0}$. Using Lemma \ref{lem:16}, we know that there already exists a finite codimensional ideal $I_{\psi}^{n}$ for $A_{0}$ such that $(\g^{\Gamma}\otimes I_{\psi}^{n})\lw=0$. In particular, $(\n_{0}^{-}\otimes I_{\psi}^{n})\lw=0$. Then
\begin{align*}
(\g_{1}\otimes A_{-1}I_{\psi}^{n})\lw &=\mbox{span}([\n_{0}^{-},x_{\alpha}^{+}]\otimes A_{-1}I_{\psi}^{n})\lw \\
&=\mbox{span}([\n_{0}^{-}\otimes I_{\psi}^{n},x_{\alpha}^{+}\otimes A_{-1}])\lw \\
&=\mbox{span}((\n_{0}^{-}\otimes I_{\psi}^{n})(x_{\alpha}^{+}\otimes A_{-1})\lw-(-1^{|\n_{0}^{-}||x_{\alpha}^{+}|})(x_{\alpha}^{+}\otimes A_{-1})(\n_{0}^{-}\otimes I_{\psi}^{n})\lw)\\
&=0.
\end{align*}
Now using Lemma  \ref{lem:17}, we denote $(\g\otimes I^{n})^{\Gamma}$ to be the ideal generated by $(\g^{\Gamma}\otimes I_{\psi}^{n})$. Hence \[(\g\otimes I^{n})^{\Gamma}\LW=((\g_{0}\otimes I_{\psi}^{n})\oplus(\g_{1}\otimes A_{-1}I_{\psi}^{n}))\LW=0.\]
Therefore, we see that there exists a finite codimensional ideal $I$ such that $(\g\otimes I^{n})^{\Gamma}\lw=0$.
\end{proof}
\begin{thm}\label{thm:4}
Let $\g$ be the Lie superalgebra and let $\psi\in((\h\otimes A)^{\Gamma})^{*}$ be such that $\psi|_{\h_{\Gamma}}=\lambda\in\Lambda^{+}$ and $A$ be the commutative associative unital algebra of infinite dimension. Then, $\LW$ is finite dimensional for any triangular decomposition of $\g^{\Gamma}$.
\end{thm}
\begin{proof}
We know that, for the Lie superalgebra $\g$, the fixed subalgebra $\g^{\Gamma}$ is going to be of type $\textbf{II}$. We had chosen the triangular decomposition for $\g$ to be consistent with that of the triangular decomposition for $\g^{\Gamma}$. Since it is not possible for type $\textbf{II}$ superalgebra to have a parabolic triangular decomposition, $\g$ will also not have a parabolic triangular decomposition. From the definition of $\LW$, we know that $\LW=\mathbf{U}(\n^{-}\otimes A)^{\Gamma}\lw$. Using Lemma \ref{lem:8.9}, we have a finite co-dimensional ideal $I$ such that $(\n^{-}\otimes I^{n})^{\Gamma}\lw=0$. Hence $\LW=\mathbf{U}(\n^{-}\otimes A/I^{n})^{\Gamma}\lw$.  We show that there exists an $N\in\mathbb{N}$ such that 
\begin{equation*}
\LW=\mathbf{U}_{n}(\n^{-}\otimes A/I^{n})^{\Gamma}\lw \quad \forall\quad n\geq N.
\end{equation*}
We know that the weights of $W^{\Gamma}(\lambda)$ are contained in $\lambda-Q_{\Gamma}^{+}$ and hence only a finite number of weights are dominant integral weights. Since $W^{\Gamma}(\lambda)\in\mathcal{I}_{\lambda}^{\Gamma}$, it is going to be a semisimple $\g_{\bar{0}}^{\Gamma}$ module, This would imply that $W^{\Gamma}(\lambda)$ can be written as the direct sum of finite dimensional $\g_{\bar{0}}^{\Gamma}$ modules and its weights are invariant under the action of the (finite) Weyl group of $\g_{\bar{0}}^{\Gamma}$. We also know that the local Weyl module $W_{loc}^{\Gamma}(\psi)$ for $\psi\in((\h\otimes A)^{\Gamma})^{*}$, such that $\psi|_{\h_{\Gamma}}=\lambda\in\Lambda^{+}$, is a quotient of $W^{\Gamma}(\lambda)$. Hence for all $\psi\in((\h\otimes A)^{\Gamma})^{*}$, such that $\psi|_{\h_{\Gamma}}=\lambda\in\Lambda^{+}$, the set of $\g^{\Gamma}$ weights of $W_{loc}^{\Gamma}(\psi)$ is finite. This gives us the existence of $N\in\mathbb{N}$ such that
\begin{equation*}
        W^{\Gamma}_{loc}(\psi)=\mathbf{U}_{n}(n^{-}\otimes A/I^{n})^{\Gamma}\lw \quad n\geq N.
    \end{equation*}
Since $I^{n}$ is finite codimensional, the Lie superalgebra $(\n^{-}\otimes A/I^{n})^{\Gamma}$ is finite dimensional. Hence we can conclude that $\LW$ is finite dimensional.
\end{proof}
\begin{defn} We define $\mathcal{L}(\h\otimes A)^{\Gamma}$\label{def:8} as follows:
    \[\mathcal{L}(\h\otimes A)^{\Gamma}=\{\psi\in((\h\otimes A)^{\Gamma})^{*}\mid \psi(\h\otimes I)^{\Gamma}=0\; \mbox{for some finite co-dimensional ideal $I$ of $A$}\}.\]
\end{defn}
\begin{cor}
    Let $\psi\in((\h\otimes A)^{\Gamma})^{*}$ such that $\psi|_{\h_{\Gamma}}\in\Lambda^{+}$. If $\psi\notin\mathcal{L}(\h\otimes A)^{\Gamma}$, then $\LW=0$.
\end{cor}
\begin{proof}
From Theorem \ref{thm:4}, we have seen that $\LW$ is finite dimensional. There exists a finite co-dimensional ideal $I$ of $A$ such that $(\g\otimes I)^{\Gamma}\lw=0$. In particular $(\h\otimes a)^{\Gamma}\lw=\psi(\h\otimes a)^{\Gamma}\lw=0$ for all $a\in I$. If $\psi\notin\mathcal{L}(\h\otimes A)^{\Gamma},$ then there exists $a\in I$ such that $\psi(\h\otimes a)^{\Gamma}\neq 0$. As $(\h\otimes a)^{\Gamma}\lw=0$ we get $\lw=0$ and $\LW=0$.
\end{proof}

\section{Twisted Local weyl modules as an image of untwisted local Weyl modules}\label{sec:9}
For a semisimple Lie algebra $\g$ and a commutative associative finitely generated algebra $A$, the local Weyl modules, for equivariant map algebras $(\g\otimes A)^{\Gamma}$, are defined as the restriction of the local Weyl modules for $\g\otimes A$ using twisting functors \cite{FKKS12}.
In this section, we extend this idea to the case of Lie superalgebras. We begin by recalling the concepts of evaluation modules \cite{Sav14}.

Here we assume Lie superalgebra $\g$ such that the triangular decomposition of $\g^{\Gamma}$ satisfies the condition $\mathbf{C}$. Let $X=\mbox{spec}\; A$, be the prime spectrum of $A$. So $X$ is an affine scheme of finite type. Let $X_{rat}$ denote the set of $\mathbb{K}$-rational points of $X$. We say $\textbf{m}\in\mbox{maxspec}A$ is a $\mathbb{K}$-rational point of $X$ if its residue field $A/\textbf{m}$ is $\mathbb{K}$. Thus, $X_{rat}\subseteq\mbox{maxspec}A$ and this becomes an equality when $A$ is finitely generated. Let $\Gamma$ be the cyclic group acting freely on $X$ (equivalently on $A$) and on the Lie superalgebra $\g$. Let $\g\otimes A$ be the map Lie superalgebra of regular maps from $X$ to $\g$ under pointwise multiplication. The equivariant map superalgebra $(\g\otimes A)^{\Gamma}$ consists of the $\Gamma$-fixed points of the diagonal action of $\Gamma$ on $\g\otimes A$. Hence $(\g\otimes A)^{\Gamma}$ is the subalgebra of $\Gamma$-equivariant maps. 

When we refer to the dimension of $A$, we consider $A$ to be a vector space. We say that the ideal $I$ of A is of finite co-dimension, if the quotient $A/I$ is finite dimensional as a vector space over the field $\mathbb{K}$.
\begin{defn}(Support of an ideal $J$)\label{def:9}
      The support of an ideal $J$ of $A$ is defined to be
      \begin{equation*}
          \Supp J=\{\textbf{m}\in\mbox{maxspec}\ A~|~ J\subseteq\textbf{m}\}.
      \end{equation*}
\end{defn}
\begin{defn}(Support of a module)\label{def:10}
    Let V be a $(\g\otimes A)^{\Gamma}$-module. The annihilator of V, denoted by $\Ann_{A}^{\Gamma}V$, is the largest $\Gamma$-invariant ideal $I$ of $A$ such that $(\g\otimes I)^{\Gamma}V=0$. It is also defined to be
    \begin{equation*}
        \Ann^{\Gamma}_{A}V=\langle f\in A~\mid 
        ~uV=0, ~\forall~u\in(\g\otimes A)^{\Gamma}\cap(\g\otimes f)\rangle.
    \end{equation*}
    Then the support of $V$ is defined to be
    \begin{equation*}
        \Supp_{A}^{\Gamma}V= \Supp \Ann_{A}^{\Gamma}V.
    \end{equation*}
\end{defn}
\begin{rem}
    In this section we choose  the root space decomposition for $\g^{\Gamma}$ that satisfies the condition $\mathbf{C}$, as we had defined in Section \ref{sec:2}. We also consider the root space decomposition for $\g$ that is consistent with that of $\g^{\Gamma}$.
\end{rem}

\begin{defn} (Evaluation modules \cite{Sav14})\label{def:11}
Let $X_{*}$ \label{pg:7} be the set of finite subsets of $X_{rat}$ such that no two elements in each of the subsets lie in the same orbit. That is, 
\begin{equation*}
    X_{*}=\{\mathbf{x}\subseteq X_{rat}|\quad\Gamma x\cap\Gamma y=\emptyset,\quad x\neq y\in\mathbf{x}\}.
\end{equation*}
As discussed earlier, $X_{rat}\subseteq\mbox{maxspec}\ A$. Hence, corresponding to every point $x\in\mathbf{x}$, there exists some $\textbf{m}_{x}\in\mbox{maxspec}\ A$. For some $\mathbf{x}\in X_{*}$, the evaluation map for $(\g\otimes A)^{\Gamma}$ is defined as
\begin{equation*}
    \mbox{ev}_{\mathbf{x}}^{\Gamma}:(\g\otimes A)^{\Gamma}\rightarrow(\g\otimes A)/(\g\otimes \prod_{\mathbf{x}}\textbf{m}_{x})\cong\bigoplus_{\mathbf{x}}(\g\otimes(A/\textbf{m}_{x}))\cong\bigoplus_{\mathbf{x}}(\g\otimes\mathbb{K})\cong\bigoplus_{\mathbf{x}}\g.
\end{equation*}
That is, the evaluation map for $(\g\otimes A)^{\Gamma}$ is the restriction of the evaluation map for $\g\otimes A$. We let $\g^{\mathbf{x}}=\bigoplus_{\mathbf{x}}\g$. Hence the evaluation map for $(\g\otimes A)^{\Gamma}$ is 
\begin{equation*}
    \mbox{ev}_{\mathbf{x}}^{\Gamma}:(\g\otimes A)^{\Gamma}\rightarrow\g^{\mathbf{x}},\quad\quad \mbox{ev}_{\mathbf{x}}^{\Gamma}(\beta)=(\beta(x))_{x\in\mathbf{x}}.
\end{equation*}
The map defined above is a Lie superalgebra homomorphism. In particular, it is an epimorphism. Corresponding to an $\mathbf{x}\in X_{*}$ we have the set $\{\rho_{x}:x\in\mathbf{x}\}$ of non zero representations $\rho_{x}:\g\rightarrow End(V_{x})$. Using these, the evaluation representation, denoted by $\mbox{ev}_{\mathbf{x}}^{\Gamma}(\rho_{x})_{x\in\mathbf{x}}$, is defined as 
\begin{equation*}
    (\g\otimes A)^{\Gamma}\xrightarrow{\mbox{ev}_{\mathbf{x}}^{\Gamma}}\bigoplus_{\mathbf{x}}\g^{\mathbf{x}}\xrightarrow{\otimes _{x\in\mathbf{x}}\rho_{x}} End\left(\bigotimes_{x\in\mathbf{x}}V_{x}\right).
\end{equation*}
    
\end{defn}
\begin{defn}(Generalized Evaluation module)\label{def:12}
Suppose $\textbf{m}_{1},\ldots,\textbf{m}_{l}\in \mbox{maxspec}\ A$ are pairwise disjoint and $n_{1},\ldots,n_{l}\in\mathbb{N}$. Then the associated generalized evaluation map is the composition 
    \begin{equation*}
        \mbox{ev}^{\Gamma}_{\textbf{m}_{1}^{n_{1}},\cdots,\textbf{m}_{l}^{n_{l}}}:(\g\otimes A)^{\Gamma}\rightarrow(\g\otimes A)/(\g\otimes \prod_{i=1}^{l}\textbf{m}_{i}^{n_i})\cong\bigoplus_{i=1}^{l}(\g\otimes (A/\textbf{m}_{i}^{n_i})).
    \end{equation*}
    If $V_{i}$ is a finite dimensional $(\g\otimes(A/\textbf{m}_{i}^{n_{i}}))$-module with corresponding representation $\rho_{i}:(\g\otimes(A/\textbf{m}_{i}^{n_{i}}))\rightarrow End (V_{i})$, then the composition
    \begin{equation}
        (\g\otimes A)^{\Gamma}\xrightarrow{\mbox{ev}^{\Gamma}_{\textbf{m}_{1}^{n_{1}},\cdots,\textbf{m}_{l}^{n_{l}}}}\bigoplus_{i=1}^{l}(\g\otimes (A/\textbf{m}_{i}^{n_i}))\xrightarrow{\otimes_{i=1}^{l}\rho_{i}} End(\otimes_{i=1}^{l}V_{i})
    \end{equation}
    is called generalized evaluation representation of $(\g\otimes A)^{\Gamma}$ and is denoted by $\mbox{ev}^{\Gamma}_{\textbf{m}_{1}^{n_1},\cdots,\textbf{m}_{l}^{n_{l}}}(\rho_{1},\cdots,\rho_{l})$.
\end{defn}
In the above definition of the generalized evaluation map, we denote $I_{\eta}=\prod_{x\in \Supp(\eta
)}\textbf{m}_{x}^{\eta(x)}$, where $\eta:X_{rat}\rightarrow\mathbb{N}.$ 
Hence we write
\begin{equation*}
    \mbox{ev}^{\Gamma}_{\eta}:(\g\otimes A)^{\Gamma}\longrightarrow\bigoplus_{x\in   \Supp\eta}(\g\otimes (A/\textbf{m}_{x}^{\eta(x)}))\xrightarrow{\cong}\bigoplus_{x\in \Supp\eta}(\g\otimes A)/(\g\otimes \textbf{m}_{x}^{\eta(x)}).
\end{equation*}
This is nothing but the restriction of the evaluation map $\mbox{ev}_{\eta}$ of $\g\otimes A$ to $(\g\otimes A)^{\Gamma}$ as given in \cite[Eq 5.1]{Sav14}. Hence 
\begin{equation*}
    \mbox{ker ev}_{\eta}^{\Gamma}=(\mbox{ker ev}_{\eta})\cap(\g\otimes A)^{\Gamma}=(\g\otimes I_{\eta})\cap(\g\otimes A)^{\Gamma}.
\end{equation*}
 For a finite subset $\textbf{x}\subset X$, define $I_{\textbf{x}}=I_{\eta}$, where $\eta(x)=1$ for $x\in\textbf{x}$ and $\eta(x)=0$ for $x\notin\textbf{x}$.
\begin{lemma}\label{lem:10}
    If $\eta:X_{rat}\rightarrow\mathbb{N}$ satisfies $\Supp~\eta\in X_{*}$, then
    \begin{equation*}
        (\g\otimes I_{\eta})^{\Gamma}=(\g\otimes \tilde I_{\eta})^{\Gamma}, \quad \mbox{where}\quad \tilde I_{\eta}=\prod_{x\in \Supp\eta}\prod_{\gamma\in\Gamma}\textbf{m}_{\gamma.x}^{\eta(x)}.
    \end{equation*}
\end{lemma}
\begin{proof}
    The proof follows similarly as in \cite[Lemma 2.1]{FKKS12}.
\end{proof}
We rewrite \cite[Lemma 5.6]{Sav14} as the following result.
\begin{thm}\label{thm:9.7}
For $\eta:X_{rat}\rightarrow\mathbb{N}$, with $\Supp~\eta\in X_{*}$, the map $\mbox{ev}^{\Gamma}_{\eta}$ is surjective.
\end{thm}
As a corollary to Theorem \ref{thm:9.7}, we get the following.
\begin{cor}\label{cor:9.8}
    \begin{equation*}
        (\g\otimes A)^{\Gamma}/(\g\otimes I_{\eta})^{\Gamma}\xrightarrow{\cong}(\g\otimes A)/(\g\otimes I_{\eta}).
    \end{equation*}
\end{cor}
Let $R(\g)$ \label{pg:8} denote the set of isomorphism classes of irreducible finite dimensional representations of $\g$. Then, define $\varepsilon(X,\g)$ to be the set of finitely supported functions $\Psi: X_{rat}\rightarrow R(\g)$ and let $\varepsilon(X,\g)^{\Gamma}$ be its subset consisting of all $\Gamma$-equivariant functions. Here the support $\Supp\ \Psi$ is the set of all $x\in X_{rat}$ for which $\Psi(x)\neq 0$, where 0 denotes the isomorphism class of the trivial representation.

For isomorphic representations $\rho$ and $\rho'$ of $\g$, the evaluation representations $\mbox{ev}_{x}\rho$ and $\mbox{ev}_{x}\rho'$ are isomorphic. If $[\rho]\in R(\g)$ is the isomorphism class of $\rho$, we can define $\mbox{ev}_{x}([\rho])$ to be the isomorphism class of $\mbox{ev}_{x}\rho$ and this is independent of the representative $\rho$. Similarly, for a finite subset $\textbf{x}\subseteq
 X_{rat}$ and the representations $\rho_{x}$ of $\g$ for $x\in\textbf{x}$, we define $\mbox{ev}_{\textbf{x}}([\rho_{x}])_{x\in\textbf{x}}$ to be the isomorphism class of $\mbox{ev}_{\textbf{x}}(\rho_{x})_{x\in\textbf{x}}$. For $\Psi\in\varepsilon(X,\g)$ and $\textbf{x}\in X_{*}$, we define the isomorphism class of $(\g\otimes A)$-module, given by $\mbox{ev}_{\Psi}=\mbox{ev}_{\textbf{x}}(\Psi(x))_{x\in\textbf{x}}$.
\begin{defn}(Class $\mbox{ev}_{\Psi}^{\Gamma}$)\label{def:13}
    For $\Psi\in\varepsilon(X,\g)^{\Gamma}$, $\mbox{ev}_{\Psi}^{\Gamma}=\mbox{ev}_{\mathbf{x}}^{\Gamma}(\Psi(x))_{x\in\mathbf{x}}$ where $\mathbf{x}\in X_{*}$ contains one element of each $\Gamma$ orbit in $\Supp\ \Psi$. By \cite[Lemma 5.9]{Sav14}, we have that $\mbox{ev}_{\Psi}^{\Gamma}$ is independent of the choice of $\mathbf{x}$. For different choices of $\mathbf{x}$, we get evaluation representations belonging to the isomorphism class of some representation of $(\g\otimes A)^{\Gamma}$. 
\end{defn}
\begin{prop}[\cite{Sav14,NeherSavageSenesi2012}]
 The map $\Psi\mapsto\mbox{ev}_{\Psi}^{\Gamma}$, from $\varepsilon(X,\g)^{\Gamma}$ to the set of isomorphism classes of evaluation representations of $(\g\otimes A)^{\Gamma}$, is injective.
\end{prop}
The tensor product of irreducible finite dimensional $(\g\otimes A)$-modules with disjoint support is irreducible (see \cite{Sav14}). As a consequence, we get the following proposition.
\begin{prop}[\cite{Sav14}]
If $\g$ is a basic classical Lie superalgebra, then $\mbox{ev}_{\Psi}^{\Gamma}$ is an irreducible representation for all $\Psi\in\varepsilon(X,\g)^{\Gamma}$. Hence the map $\Psi\mapsto\mbox{ev}_{\Psi}^{\Gamma}$ is a bijection from $\varepsilon(X,\g)^{\Gamma}$ to the set of isomorphism classes of irreducible evaluation representations of $(\g\otimes A)^{\Gamma}$. 
\end{prop}

\begin{rem}
    Since $\g\otimes\mathbb{K}\subset\g\otimes A$, any $(\g\otimes A)$-module $V$ can be viewed as a $\g$-module. 
\end{rem}
\begin{lemma}[\cite{NS}]\label{lem:12}
Suppose a finite abelian group $\Gamma$ acts on a unital associative commutative $\mathbb{K}$-algebra $A$ by automorphisms. Let $A=\oplus_{\xi\in\Xi}A_{\xi}$ be the associated grading on $A$. Then the following conditions are equivalent.
    \begin{enumerate}[label=(\alph*)]
        \item $\Gamma$ acts freely on $X$.
        \item  $\prod_{i=1}^{n}I_{\xi_{i}}=(I^{n})_{\Sigma_{i=1}^{n}\xi_{i}}$ for all $\xi_{1},\cdots,\xi_{n}\in\Xi$ and any $\Gamma$ invariant ideal $I$ of $A$.
    \end{enumerate}
\end{lemma}
For a Lie superalgebra $L$, define $L^{n},~n\geq 1$, by
\begin{equation*}
    L^{1}=L, \quad L^{n}=[L,L^{n-1}],\quad n>1.
\end{equation*}
\begin{prop}\label{prop:3}
Every finite dimensional $(\g\otimes A)^{\Gamma}$-module is annihilated by $(\g\otimes I_{\eta})^{\Gamma}$ for some $\eta:X_{rat}\rightarrow\mathbb{N}$ with $\Supp~\eta\subseteq X_{*}$.
\end{prop}
\begin{proof}
Let $V$ be a finite dimensional $(\g\otimes A)^{\Gamma}$-module annihilated by $(\g\otimes I_{\eta})^{\Gamma}$ for some finitely supported $\eta:X_{rat}\rightarrow\mathbb{N}$. It is clear from Lemma \ref{lem:10} that, we can find some $\eta':X_{rat}\rightarrow\mathbb{N}$ with $\Supp~\eta'\subseteq X_{*}$ and $(\g\otimes I_{\eta'})^{\Gamma}\subseteq(\g\otimes I_{\eta})^{\Gamma}$. Hence it is only necessary to prove that every finite dimensional $(\g\otimes A)^{\Gamma}$-module is annihilated by some $(\g\otimes I_{\eta})^{\Gamma}$.
   
We first prove that for any $\Gamma$ invariant ideal $I$ of $A$, $((\g\otimes I)^{\Gamma})^{m}=(\g\otimes I^{m})^{\Gamma},$ for all $m\geq 1.$
The proof is by induction. It is trivial for the case $m=1$. Let it be true for some $m>1$. Then 
\begin{align*}
        ((\g\otimes I)^{\Gamma})^{m+1}&=[(\g\otimes I)^{\Gamma},((\g\otimes I)^{\Gamma})^{m}]
        =[(\g\otimes I)^{\Gamma},(\g\otimes I^{m})^{\Gamma}]\\
       &=[\bigoplus_{\xi\in\Xi}\g_{\xi}\otimes I_{-\xi},\bigoplus_{\tau\in\Xi}\g_{\tau}\otimes (I^{m})_{-\tau}]=\sum
       _{\xi,\tau\in\Xi}[\g_{\xi},\g_{\tau}]\otimes I_{-\xi}(I^{m})_{-\tau}\\
       &=\sum_{\xi,\tau\in\Xi}[\g_{\xi},\g_{\tau}]\otimes (I^{m+1})_{-\xi-\tau}\quad \mbox{(by Lemma \ref{lem:12})}\\
       &=\bigoplus_{\xi'\in\Xi}\g_{\xi'}\otimes(I^{m+1})_{-\xi'}=(\g\otimes I^{m+1})^{\Gamma}.
    \end{align*}
Let $V$ be a finite dimensional $(\g\otimes A)^{\Gamma}$-module. Then there exists a finite  filtration 
    \begin{equation*}
        0=V_{0}\subseteq V_{1}\subseteq\cdots\subseteq V_{n}=V,
    \end{equation*}
such that $V_{i}/V_{i-1}$ is an irreducible finite dimensional $(\g\otimes A)^{\Gamma}$-module for $1\leq i\leq n$. Hence each $V_{i}/V_{i-1}$ is an generalized evaluation module (by \cite[Corollary 8.7 ]{Sav14}). Let $\eta_{i}:X_{rat}\rightarrow\mathbb{N}$ be the characteristic function of the support of $V_{i}/V_{i-1}$. Hence $(\g\otimes I_{\eta_{i}})^{\Gamma} V_{i}/V_{i-1}=0$. In other words $(\g\otimes I_{\eta_{i}})^{\Gamma}V_{i}\subseteq V_{i-1}$.

Let $\nu=\Sigma_{i=1}^{n}\eta_{i}$ and $\eta=n\nu$. Since $I_{\eta}=I_{\nu}^{n}$, we have $((\g\otimes I_{\nu})^{\Gamma})^{n}=(\g\otimes I_{\eta})^{\Gamma}.$ Because $I_{\nu}\subseteq I_{\eta_{i}}$, we have $(\g\otimes I_{\nu})^{\Gamma}V_{i}\subseteq V_{i-1}$ for all $1\leq i\leq n$. Therefore
    \begin{equation*}
        (\g\otimes I_{\eta})^{\Gamma}.V=((\g\otimes I_{\nu})^{\Gamma})^{n}V=0.
    \end{equation*}
\end{proof}
Let $\eta,\eta':X_{rat}\rightarrow\mathbb{N}$ with finite support. We will say that $\eta\leq\eta'$ if $\eta(x)\leq\eta'(x)$ for all $x\in X_{rat}$. Then we get $I_{\eta'}\subseteq I_{\eta}$. This happens because $I_{\eta}$ is an ideal. Hence,
\begin{equation*}
    \eta\leq\eta'\Rightarrow I_{\eta'}\subseteq I_{\eta}\Rightarrow (\g\otimes I_{\eta'})\subseteq(\g\otimes I_{\eta}).
\end{equation*}
Therefore, the projection map
\begin{equation*}
     (\g\otimes A)/(\g\otimes I_{\eta'})\rightarrow(\g\otimes A)/(\g\otimes I_{\eta}), \quad (\g\otimes A)^{\Gamma}/(\g\otimes I_{\eta'})^{\Gamma}\rightarrow(\g\otimes A)^{\Gamma}/(\g\otimes I_{\eta})^{\Gamma}.
\end{equation*}
\begin{lemma}\label{lem:18}
    If $\eta,\eta':X_{rat}\rightarrow\mathbb{N}$ are such that $\eta\leq\eta'$ and $\Supp\ \eta'\subseteq X_{*}$ then the following diagram commutes.\\
    \[
\begin{tikzcd}
(\g \otimes A)^\Gamma / (\g \otimes I_{\eta'})^\Gamma \arrow[r, "\cong"] \arrow[d, two heads] & (\g \otimes A) / (\g \otimes I_{\eta'}) \arrow[d, two heads] \\
(\g \otimes A)^\Gamma / (\g \otimes I_{\eta})^\Gamma \arrow[r, "\cong"] & (\g \otimes A) / (\g \otimes I_{\eta}).
\end{tikzcd}
\]
\end{lemma}
\smallskip

Let $V$ be a finite dimensional $(\g\otimes A)^{\Gamma}$-module. We have seen from Proposition \ref{prop:3} that, there exists $\eta:X_{rat}\rightarrow\mathbb{N}$ with $\Supp~\eta\subseteq X_{*}$, such that $(\g\otimes I_{\eta})^{\Gamma}.V=0$. Using Lemma \ref{lem:18}, we get
\begin{equation}\label{eq:5}
    (\g\otimes A)\rightarrow(\g\otimes A)/(\g\otimes I_{\eta})\cong(\g\otimes A)^{\Gamma}/(\g\otimes I_{\eta})^{\Gamma}\rightarrow\mbox{End}(V)
\end{equation}
which defines the action of $(\g\otimes A)$ on V. Hence we get a $(\g\otimes A)$-module and denote this by $V_{\eta}$ \label{pg:9}.
\begin{lemma}
    Suppose $V$ is a finite dimensional $(\g\otimes A)^{\Gamma}$-module that is annihilated by $(\g\otimes I_{\eta})^{\Gamma}$ and $(\g\otimes I_{\eta'})^{\Gamma}$ for functions $\eta,\eta':X_{rat}\rightarrow\mathbb{N}$ such that $\Supp\eta\cup \Supp\eta'\subseteq X_{*}$. Then $V_{\eta}=V_{\eta'}$ as $(\g\otimes A)$-modules.
\end{lemma}
\begin{proof}
 The proof follows similarly as in \cite[Lemma 2.6]{FKKS12}.
\end{proof}
\smallskip

Suppose $\mathcal{F}$ and $\mathcal{F}^{\Gamma}$ denote the categories of finite dimensional $(\g\otimes A)$ and $(\g\otimes A)^{\Gamma}$- modules respectively. 
Twisting and untwisting functors between full subcategories of $\mathcal{F}$ and $\mathcal{F}^{\Gamma}$ are defined and studied for when $\g$ is finite dimensional semisimple Lie algebra (\cite{FKKS12, FMS15}). Here we extend the definitions to Lie superalgebras. 

\begin{defn}(Categories $\mathcal{F}_{\mathbf{x}}$ and $\mathcal{F}_{\mathbf{x}}^{\Gamma}$) \label{def:14}.
       For $\mathbf{x}\in X_{*}$, let $\mathcal{F}_{\mathbf{x}}$ denote the full subcategory of category $\mathcal{F}$ whose objects are those $V \in \mathcal{F}$ having $\Supp_{A}\ (V)\subseteq\mathbf{x}$. Similarly let $\mathcal{F}_{\mathbf{x}}^{\Gamma}$ be the full subcategory of category $\mathcal{F}^{\Gamma}$ whose objects are those $V \in \mathcal{F}^{\Gamma}$ such that $\Supp_{A}^{\Gamma}\ (V)\subseteq\Gamma\mathbf{x}$.
\end{defn}
 Note that, for a finitely supported 
 $(\g\otimes A)$-module $V$, with $V^{\Gamma}$ being the $(\g\otimes A)^{\Gamma}$-module obtained via restriction, $\Supp^{\Gamma}_{A}V^{\Gamma}=\Gamma \Supp_{A}V$.
\begin{defn}(Twisting functors $\mathbf{T}$ and $\mathbf{T}_{\mathbf{x}})$ \label{def:15}.
The twisting functor $\mathbf{T}$ is from the category of $(\g\otimes A)$-modules $\mathcal{F}$ to $(\g\otimes A)^{\Gamma}$-modules $\mathcal{F}^{\Gamma}$ obtained by restriction. For any $\mathbf{x}\in X_{*}$, we have the induced functor $\mathbf{T}_{\mathbf{x}}:\mathcal{F}_{\mathbf{x}}\rightarrow\mathcal{F}_{\mathbf{x}}^{\Gamma}$.
\end{defn}
\begin{defn}(Untwisting functors)\label{def:16}.
Fix $\mathbf{x}\in X_{*}$. Here we define the untwisting functor $\mathbf{U}_{\mathbf{x}}:\mathcal{F}^{\Gamma}_{\mathbf{x}}\rightarrow\mathcal{F}_{\mathbf{x}}$ that maps the objects $V$ in $\mathcal{F}^{\Gamma}_{\mathbf{x}}$ to $V_{\eta}$ in $\mathcal{F}_{\mathbf{x}}$. Now we try to understand how the morphisms in the categories are mapped. Let $\beta:V\rightarrow W$ be a morphism in $\mathcal{F}^{\Gamma}_{\mathbf{x}}$ mapping two $(\g\otimes A)^{\Gamma}$-modules $V$ and $W$. We know from Proposition \ref{prop:3} that, any $(\g\otimes A)^{\Gamma}$ module $V$ is annihilated by $(\g\otimes I_{\eta})^{\Gamma}$ where $\Supp~\eta\subseteq X_{*}$. Then the action of $(\g\otimes A)^{\Gamma}$ on $V$ factors through $(\g\otimes A)^{\Gamma}/(\g\otimes I_{\eta})^{\Gamma}$. From \eqref{eq:5}, it is clear that this action induces an action by $(\g\otimes A)$ and hence $\beta$ can be considered as a homomorphism between $(\g\otimes A)$ modules $V_{\eta}$ and $W_{\eta}$. This homomorphism is denoted by $\mathbf{U}_{\mathbf{x}}(\beta)$.
\end{defn}
We have defined earlier 
$\varepsilon(X,\g)$ and $\varepsilon(X,\g)^{\Gamma}$ \label{pg:10}, the set of finitely supported function  $\Psi:X_{rat}\rightarrow R(\g)$ and the subset of finitely supported $\Gamma$-equivariant functions, respectively. Each $\Psi\in\varepsilon(X,\g)$ are functions that can also be equivalently defined as maps $\Psi:X_{rat}\rightarrow P^{+}$, where $P^{+}$ denote the dominant integral weights for $\g$. Hence corresponding to a rational point from the support of $\Psi$, we have a weight functional in $P^+$. For ease of notation, we will denote $\varepsilon$ to be $\varepsilon(X,\g)$ and $\varepsilon^{\Gamma}$ to be $\varepsilon(X,\g)^{\Gamma}$. 

For a $\Gamma$ invariant subset $Y$ of $X_{rat}$, let $Y_{\Gamma}$ \label{pg:11} denote the set of subsets of $Y$ containing exactly one point from each $\Gamma$ orbit in $Y$. For $\Psi\in\varepsilon^{\Gamma}$ and $\mathbf{x}\in (\Supp\ \Psi)_{\Gamma}$, we have
\begin{equation*}
    \Psi_{\mbox{x}}:X_{rat}\rightarrow P^{+}, \quad \Psi_{\mathbf{x}}(x)= \begin{cases}
    \Psi(x), & \text{if } x \in \mathbf{x}, \\
0, & \text{if } x \notin \mathbf{x}.

    \end{cases}
\end{equation*}
For $\Psi\in\varepsilon$, define
\begin{equation*}
    \wt(\Psi)=\sum_{x\in \Supp\Psi}\Psi(x)\in P^{+}.
\end{equation*}
For $\Psi\in\varepsilon^{\Gamma}$, define
\begin{equation*}
    \wt_{\Gamma}(\Psi)=(\wt(\Psi_{\mathbf{x}}))|_{\h_{\Gamma}} \quad \mbox{for}~\mathbf{x}\in (\Supp\ \Psi)_{\Gamma}.
\end{equation*}
Let $\lambda\in P^{+},$ then $\lambda=\sum_{i\in\mathcal{J}}k_{i}\alpha_{i}$ where $\alpha_{i}\in\phi$ and $k_{i}\in\mathbb{Z}_{+}$. That is, $\lambda$ can be written as the non negative linear combination of simple  roots of $\g$. We define the height of $\lambda$ as,\label{pg:13}
\begin{equation*}
    \het\ \lambda=\sum_{i\in\mathcal{J}}k_{i}.
\end{equation*}
Now for $\lambda'\in \Lambda^{+}$, we have $\lambda'=\sum_{\textbf{i}\in\mathcal{J}^{\Gamma}}k_{\textbf{i}}\alpha_{\textbf{i}}$ where $\alpha_{\textbf{i}}\in\Delta$ and  $k_{\textbf{i}}\in\mathbb{Z}_{+}.$ We define the height of $\lambda^{'}$ to be
\begin{equation*}
\het_{\Gamma}\ \lambda'=\sum_{\textbf{i}\in\mathcal{J}^{\Gamma}}k_{\textbf{i}}.
\end{equation*}
For $\Psi\in\varepsilon$, define
\begin{equation}
    \het\ \Psi=\het\ (\wt(\Psi)).
\end{equation}
Similarly, for $\Psi\in\varepsilon^{\Gamma}$, we have
\begin{equation}
    \het_{\Gamma}\Psi=\het_{\Gamma}(\wt_{\Gamma}(\Psi)).
\end{equation}
\begin{thm}\label{thm:3}
    For $\mathbf{x}\in X_{*}$, the twisting and the untwisting functors have the following properties.
    \begin{enumerate}[label=(\alph*)]
        \item The twisting functor $\textbf{T}_{\mathbf{x}}$ maps the isomorphism class $\mbox{ev}_{\Psi}$ for $\Psi\in\varepsilon$, $\Supp\Psi\in X_{*}$, to the isomorphism class $\mbox{ev}^{\Gamma}_{\Psi^{\Gamma}}$ for  $\Psi^{\Gamma}\in\varepsilon^{\Gamma}$, where
        \begin{equation*}
            \Psi^{\Gamma}(x)=\sum_{\gamma\in\Gamma}\gamma \Psi(\gamma^{-1}x),\quad x\in X_{rat}.
        \end{equation*}
        \item The untwisting functor $\textbf{U}_{\mathbf{x}}$ maps the isomorphism class of $\mbox{ev}^{\Gamma}_{\Psi}$, $\Psi\in\varepsilon^{\Gamma}$, to the isomorphism class $\mbox{ev}_{\Psi_{\mathbf{x}}}$.
        \item The functor $\textbf{T}_{\mathbf{x}}$ and $\textbf{U}_{\mathbf{x}}$ are mutually inverse isomorphism of categories.
    \end{enumerate}
\end{thm}
\begin{proof}
\begin{enumerate}[label=(\alph*)]
    \item Let $\Psi\in\varepsilon$ and $\Supp\ \Psi\in X_{*}$ and take $\Psi^{\Gamma}$ as 
        $\Psi^{\Gamma}(x)=\sum_{\gamma\in\Gamma}\gamma\Psi(\gamma^{-1}x).$
    For some $\mu\in\Gamma$,
\begin{equation*}
        \mu\Psi^{\Gamma}(x)=\sum_{\gamma\in\Gamma}\mu\gamma\Psi(\gamma^{-1}x)
   \quad \mbox{and} \quad
\Psi^{\Gamma}(\mu x)=\sum_{\gamma\in\Gamma}\gamma \Psi(\gamma^{-1}(\mu x)).
    \end{equation*}
    Let $\mu^{-1}\gamma=\tau$ then $\tau^{-1}=\gamma^{-1}\mu$. Clearly
    \begin{align*}
         \Psi^{\Gamma}(\mu x)&=\sum_{\gamma\in\Gamma}\mu(\mu^{-1}\gamma)\Psi((\mu^{-1}\gamma)^{-1} x)=\sum_{\tau\in\Gamma}\mu\tau\Psi(\tau^{-1}x)\\
        &=\mu\sum_{\tau\in\Gamma}\tau\Psi(\tau^{-1}x)=\mu\Psi^{\Gamma}(x),
    \end{align*}
    and hence $\Psi^{\Gamma}(x)\in\varepsilon^{\Gamma}$. From the definition of $\Psi^{\Gamma}$ we can see that  $\Supp(\Psi^{\Gamma})\subseteq\Gamma.\Supp(\Psi)$. Let $\textbf{x}\in X_{*}$ and $V\in\mathcal{F}_{\textbf{x}}$ is irreducible and corresponds to $\Psi\in\varepsilon$. Let $\mbox{ev}_{\Psi}=(\otimes_{x\in\textbf{x}}\rho_x)\circ\mbox{ev}_{\textbf{x}}$ be the corresponding evaluation representation. Then we have seen that this representation factors through $(\g\otimes A)/(\g\otimes I_{\textbf{x}})$. Using Corollary \ref{cor:9.8} we get $\textbf{T}_{\textbf{x}}(V)$ to be the $(\g\otimes A)^{\Gamma}$-module given by
    \begin{equation}
    \begin{split}
        (\g\otimes A)^{\Gamma}\rightarrow(\g\otimes A)^{\Gamma}/(\g\otimes I_{\textbf{x}})^{\Gamma}\xrightarrow{\cong}(\g\otimes A)/(\g\otimes I_{\textbf{x}})=(\g\otimes A)/(\g\otimes \prod_{x\in\textbf{x}}m_{x})\\\cong\bigoplus_{x\in\textbf{x}}(\g\otimes(A/m_{x}))\xrightarrow{\rho=\otimes_{x\in\textbf{x}}\rho_{x}}End(V).
    \end{split}    
    \end{equation}
    We know already from Lemma \ref{lem:10} that $(\g\otimes I_{\textbf{x}})^{\Gamma}=(\g\otimes \tilde I_{\textbf{x}})^{\Gamma}$ where $\tilde I_{\textbf{x}}=\prod_{x\in\textbf{x}}\prod_{\gamma\in\Gamma}m_{\gamma.x}$. Hence combining this equality with the above equation we get the evaluation representation $(\otimes_{x\in\textbf{x}}\rho_{x})\circ\mbox{ev}^{\Gamma}_{\Gamma.\textbf{x}}$ for $(\g\otimes A)^{\Gamma}$ which is in the isomorphism class of $\mbox{ev}^{\Gamma}_{\Psi^{\Gamma}}$. This completes the proof of (i).
    \item Let $\textbf{x}\in X_{*}$ and $V'\in\mathcal{F}_{\textbf{x}}^{\Gamma}$ be an irreducible module  corresponding to $\Psi\in\varepsilon^{\Gamma}$. Let $\mbox{ev}_{\Psi}^{\Gamma}=(\otimes_{x\in\textbf{x}}\rho_{x})\circ\mbox{ev}_{\textbf{x}}^{\Gamma}$ be the corresponding evaluation representation. Then we get $\textbf{U}_{\textbf{x}}(V')$ to be the $(\g\otimes A)-$ mdoule given by the composition
    \begin{equation*}
        (\g\otimes A)\rightarrow(\g\otimes
        A)/(\g\otimes I_{\textbf{x}})\xrightarrow{\cong}(\g\otimes A)^{\Gamma}/(\g\otimes I_{\textbf{x}})^{\Gamma}\cong\g^{\textbf{x}}\xrightarrow{\otimes_{x\in\textbf{x}}\rho_{x}}End(V').
    \end{equation*}
    This is precisely the evaluation representation of $(\g\otimes A)$ and lies in the isomorphism class of $\mbox{ev}_{\Psi_\textbf{x}}$. Hence (ii) follows.
    \item Suppose $V\in \mathcal{F}_{\textbf{x}}$. Then $V$ is annihilated by some $(\g\otimes I_{\eta})$ and the action of $(\g\otimes A)$ on $\textbf{U}_{\textbf{x}}\textbf{T}_{\textbf{x}}(V)$ is given by
    \begin{equation*}
        (\g\otimes A)\rightarrow(\g\otimes A)/(\g\otimes I_{\eta})\xrightarrow{\cong}(\g\otimes A)^{\Gamma}/(\g\otimes I_{\eta})^{\Gamma}\xrightarrow{\cong}(\g\otimes A)/(\g\otimes I_{\eta})\rightarrow End(V)
    \end{equation*}
    where the two isomorphisms are mutually inverse. Hence $\textbf{U}_{\textbf{x}}\textbf{T}_{\textbf{x}}(V)=V$. Let $\beta$ be a morphism in $\mathcal{F}_{\textbf{x}}$. So $\beta:V\rightarrow W$ be a homomorphism of $(\g\otimes A)-$modules. There exists an $\eta: X_{rat}\rightarrow\mathbb{N}$ with support contained in $\textbf{x}$ such that $(\g\otimes I_{\eta})$ annihilates both $V$ and $W$. By restricting to $(\g\otimes A)^{\Gamma}$, we get $\beta$ to be a morphism in $\mathcal{F}_{\textbf{x}}^{\Gamma}$ denoted by $\textbf{T}_{\textbf{x}}(\beta)$. If we denote $V'$ and $W'$ to be the corresponding $(\g\otimes A)^{\Gamma}$-modules, then again this action of $(\g\otimes A)^\Gamma$ factors through $(\g\otimes A)^{\Gamma}/(\g\otimes I_{\eta})^{\Gamma}\cong(\g\otimes A)/(\g\otimes I_{\eta})$, giving us the morphism $\textbf{T}_{\textbf{x}}(\beta)$ to be a morphism from $V'_{\eta}$ to $W'_{\eta}$. This is exactly $\textbf{U}_{\textbf{x}}\textbf{T}_{\textbf{x}}(\beta)$, morphism of $(\g\otimes A)-$modules. From \eqref{eq:5} it is clear that $V'_{\eta}=V$ and $W'_{\eta}=W$. Hence $\textbf{U}_{\textbf{x}}\textbf{T}_{\textbf{x}}$ is an identity on morphisms and is therefore the identity functor on $\mathcal{F}_{\textbf{x}}$. Similarly $\textbf{T}_{\textbf{x}}\textbf{U}_{\textbf{x}}$ is the identity functor on $\mathcal{F}^{\Gamma}_{\textbf{x}}$. Hence the proof.   
\end{enumerate}
\end{proof}
Let $\alpha_{i}$ be the simple root and $\omega_{i}$ be the fundamental weight of $\g$ corresponding to $i\in \mathcal{J}$. We have already defined $\g^{\Gamma}$ in such a way that the set of nodes $\mathcal{J}^{\Gamma}$ can be identified naturally with the sets of $\Gamma$ orbits in $\mathcal{J}$. For $\textbf{i}\in \mathcal{J}^{\Gamma}$, define
\begin{equation}
    x^{+}_{\textbf{i}}=\sqrt{k_{\textbf{i}}}\sum_{i\in\textbf{i}}X_{i},\quad x^{-}_{\textbf{i}}=\sqrt{k_{\textbf{i}}}\sum_{i\in\textbf{i}}X^{-}_{i},\quad h_{\textbf{i}}=k_{\textbf{i}}\sum_{i\in\textbf{i}}H_{i}
\end{equation}
where $X^{+}_{i}, X^{-}_{i}, H_{i}$ are either even roots of $\g$, that generate $\mbox{sl}_{2}(\mathbb{C})$, or odd roots of $\g$ such that $[X^{+}_{i}, X^{-}_{i}]=H_{i}$. If $X_{i}^{+}, X^{-}_{i}, H_{i}$ are the even roots, then the value of $k_{\textbf{i}}$ depends on $\g^{\Gamma}_{\bar{0}}$. That is,  $k_{\textbf{i}}=2$, if $\g_{\bar{0}}$ is of type $A_{2n}$, $\Gamma$ acts non trivially and $\textbf{i}$ corresponds to the short root of $\g^{\Gamma}_{\bar{0}}$. Otherwise $k_{\textbf{i}}=1$ \cite{FMS15}. All these triples together will generate $\g^{\Gamma}$.
\smallskip

Let $\alpha_{\textbf{i}}$ and $\omega_{\textbf{i}}$ be the simple root and the fundamental weight of $\g^{\Gamma}$, respectively, corresponding to $\textbf{i}\in \mathcal{J}^{\Gamma}$. Thus we get,
\begin{equation}\label{eq:10}
    \Lambda=\bigoplus_{\textbf{i}\in \mathcal{J}^{\Gamma}}\mathbb{Z}\omega_{\textbf{i}},\quad \Lambda^{+}=\bigoplus_{\textbf{i}\in \mathcal{J}^{\Gamma}}\mathbb{N}\omega_{\textbf{i}},\quad Q_{\Gamma}=\bigoplus_{\textbf{i}\in \mathcal{J}^{\Gamma}}\mathbb{Z}\alpha_{\textbf{i}},\quad Q^{+}_{\Gamma}=\bigoplus_{\textbf{i}\in \mathcal{J}^{\Gamma}}\mathbb{N}\alpha_{\textbf{i}}.
\end{equation}
These are the integral weight lattices, dominant integral weight lattice, root lattice and positive root lattice of $\g^{\Gamma}$ respectively.

\begin{lemma}\label{lem:13}
 Let $\Gamma$ be the cyclic automorphism group acting on the Lie superalgebra $\g$ by diagram automorphisms.
\begin{enumerate}[label=(\alph*)]
\item For all $i\in \mathcal{J}$, we have $\alpha_{i}|_{\h_{\Gamma}}=\alpha_{\Gamma i}$ and $\omega_{i}|_{\h_{\Gamma}}=k_{\Gamma i}\omega_{\Gamma i}$.
\item For all $\Psi\in\varepsilon^{\Gamma}$ and $\mathbf{x}\in (\Supp_{\Psi})_{\Gamma}$, we have $\het_{\Gamma}\Psi=\het\Psi_{\mathbf{x}}$.
\item Let $\lambda'\in P^{+}$ and set $\lambda=\lambda'|_{\h_{\Gamma}}$. Then, for $V\in \mathcal{I}_{\lambda'}$, we have $\textbf{T}(V)\in\mathcal{I}_{\lambda}^{\Gamma}$ and $V_{\lambda'}=\textbf{T}(V)_{\lambda}$ as vector spaces.
    \end{enumerate}
\end{lemma}
\begin{proof}
\begin{enumerate}[label=(\alph*)]
\item 
From triangular decompositions, we have $\h_{\Gamma}\subset\h$. Let $\alpha_{i}\in\h^{*},i\in\mathcal{J}$, be a simple root of $\g$. The action of $\gamma\in\Gamma$ on $\alpha_{i}$ can be defined as $\gamma.\alpha_{i}=\alpha_{\gamma(i)}$. Since $\Gamma$ is a group acting on the Lie superalgebra $(\g\otimes A)$, the group action on $\h_{\Gamma}^{*}$ can also be defined as $\gamma(\alpha(h))=\alpha(\gamma^{-1}h)$. For $h\in\h_{\Gamma}$, we get $\alpha_{\gamma(i)}(h)=\gamma.\alpha_{i}(h)=\alpha_{i}(\gamma^{-1}h)=\alpha_{i}(h)$. Therefore, we find that a simple root of $\g$ remains invariant in an orbit of $\mathcal{J}$. We have, $\alpha_{i}|_{\h_{\Gamma}}=\alpha_{\Gamma i}=\alpha_{\textbf{i}}$. Now
for $h_{\textbf{j}}\in\h_{\Gamma}$,
$\omega_{\Gamma i}(h_{\textbf{j}})=\omega_{\textbf{i}}(h_{\textbf{j}})=\delta_{\textbf{i}\textbf{j}}$ and then $\omega_{i}|_{\h_{\Gamma}}$ implies
\begin{equation*}
\omega_{i}\left(k_{\textbf{i}}\sum_{j\in\textbf{i}}H_{j}\right)=k_{\textbf{i}}\sum_{j\in\textbf{i}}\omega_{i} (H_{j})=k_{\textbf{i}}\delta_{ij}=k_{\textbf{i}}\omega_{\textbf{i}} .
\end{equation*}
\item Let $\Psi\in\varepsilon^{\Gamma}$ and $\textbf{x}\in (\Supp\ \Psi)_{\Gamma}$. Denote $\lambda\in P^{+}$ to be the $\wt(\Psi_{\textbf{x}})$. Then, by definition $\lambda=\sum_{i\in\mathcal{J}}k_{\i}\alpha_{i}$, for $k_{i}\in\mathbb{Z}_{+}$ and $\alpha_{i}\in\phi$. Then $\het(\wt\Psi_{\textbf{x}})=\sum k_{i}$. By definition $\het_{\Gamma}\Psi=\het_{\Gamma}((\wt(\Psi_{\textbf{x}}))|_{\h_{\Gamma}})$. Then $\wt(\Psi_{\textbf{x}})|_{\h_{\Gamma}}=\lambda|_{\h_{\Gamma}}=\sum_{i\in\mathcal{J}}k_{i}\alpha_{i}|{_{\h_{\Gamma}}}$. This would imply $\het_{\Gamma}\Psi=\sum k_{i}$. Hence $\het_{\Gamma}\Psi=\het\Psi_{\textbf{x}}$. 
        \item Let $V\in\mathcal{I}_{\lambda'}$. Then the weights of $V$ is less than (or equal to) $\lambda'$. This implies that $(\n^{+}\otimes A)v_{\lambda'}=0$. From part(i) we can say that $(\n^{+}\otimes A)^{\Gamma}v_{\lambda}=0$, for $\lambda=\lambda'|_{\h_{\Gamma}}$, and hence the $\g^{\Gamma}$ weights of $\textbf{T}(V)$ lie in $\lambda-Q_{\Gamma}^{+}$. This implies that $\textbf{T}(V)\in\mathcal{I}_{\lambda}^{\Gamma}$. As a consequence of the definition of $\lambda$, it can also be observed that the highest weight spaces of $V$ and $\textbf{T}(V)$ are the same. That is, $V_{\lambda'}=\textbf{T}(V)_{\lambda}$ as vector spaces.
    \end{enumerate}
\end{proof}
Now we define the untwisted local Weyl modules with respect to $\Psi\in\varepsilon$. The definition of the local Weyl modules, corresponding to a dominant integral weight, has already been given in \cite{BCM19}. Here we give a definition of (untwisted) local Weyl module for Lie superalgebras, analogous to that provided in \cite{FKKS12}, for the case of Lie algebras. 
\begin{defn}(Untwisted local Weyl module)\label{def:1}
Given $\Psi\in\varepsilon$, the untwisted local weyl module $W(\Psi)$ is $(\g\otimes A)-$ module generated by a non zero vector $w_{\Psi}$ satisfying the relations
\begin{equation*}
        (\n^{+}\otimes A) w_{\Psi}=0
    \end{equation*}
\begin{equation}\label{eq:6}
        (x_{i}^{-}\otimes 1)^{\lambda(h_{i})+1} w_\Psi=0, i\in \mathcal{J},\quad \mbox{where}~\lambda=\wt(\Psi)=\sum_{x\in \Supp\Psi}\Psi(x)
    \end{equation}
\begin{equation*}
        \beta w_{\Psi}=\Bigl(\sum_{x\in \Supp\ \Psi}\Psi(x)(\beta(x))\Bigl)w_{\Psi}, \quad \beta\in(\h\otimes A).
    \end{equation*}
\end{defn}
Defining the untwisted local Weyl module in this manner, we will show that the twisted local Weyl module can be found as its image under the twisting functor.

Let $I_{\psi}$,~$\psi\in(\h\otimes A)^{*}$, be the sum of all ideal $I\subseteq A$ such that $(\g\otimes I)w_{\psi}=0$. 
\begin{prop}\label{prop:5}
Let $\psi_{1}, \psi_{2}\in(\h\otimes A)^{*},~\psi_{1}|_{\h}=\lambda,~\psi_{2}|_{\h}=\mu$ and suppose that $\lambda,~\mu\in P^{+}$ are such that $\lambda+\mu\in P^{+}$. If $\Supp(I_{{\psi}_{1}})\cap \Supp(I_{{\psi_{2}}})=\emptyset$, then we have
    \begin{equation*}
        W_{A}^{loc}(\psi_{1}+\psi_{2})\cong W_{A}^{loc}(\psi_{1})\otimes W_{A}^{loc}(\psi_{2})
    \end{equation*}
    as $(\g\otimes A)$-modules.
\end{prop}
\begin{rem}\label{rem:2}
From Definition  \ref{def:1} and Proposition \ref{prop:5}, we can see that the untwisted local Weyl module corresponding to $\Psi\in\varepsilon$, such that $\wt(\Psi)=\lambda$, is isomorphic to the tensor product of local Weyl modules associated to some dominant integral weights.
\end{rem}
We have already defined the twisted local Weyl module corresponding to $\psi\in((\h\otimes A)^{\Gamma})^{*}$ in terms of the generator relations. That is the local Weyl module at a single point. Now we give the definition corresponding to $\Psi\in\varepsilon^{\Gamma}$ in terms of the action of the twisting Weyl functor on an irreducible $\A$-module. In order to do this, we need to introduce certain new modules. These are going to be $(\g\otimes A)$ or $(\g\otimes A)^{\Gamma}$ modules defined with respect to $\Psi\in\varepsilon$ or $\Psi\in\varepsilon^{\Gamma}$, respectively.
\begin{defn}(Module $V^{\Gamma}(\Psi),V(\Psi)$)\label{def:17}
    Let $\Psi\in\varepsilon$ and $x_{1},\ldots,x_{k}\in \Supp\ \Psi$ such that $\wt(\Psi)=\lambda\in P^{+}$. Then, by the definition of $\Psi$, we know that each $\Psi(x_{i})\in P^{+}$. Let ${ (V(\Psi(x_{i})))}_{i=1,\cdots,k}$ be the collection of finite dimensional irreducible $(\g\otimes A)$-module of highest weight $\Psi(x_{i})$ and having disjoint support. Then from Corollary 4.13 and 4.14 in \cite{Sav14}, we know that, $V(\Psi)=V(\Psi(x_{1}))\otimes\cdots\otimes V(\Psi(x_{k}))$ is going to be a finite dimensional irreducible $(\g\otimes A)$-module. Similarly, for $\Psi\in\varepsilon^{\Gamma}$, let $V^{\Gamma}(\Psi)=V^{\Gamma}(\Psi(x_{1}))\otimes\cdots\otimes V^{\Gamma}(\Psi(x_{k}))$ denote the corresponding irreducible finite dimensional $(\g\otimes A)^{\Gamma}$-module.
\end{defn}
\begin{rem}
    From Lemma \ref{lem:3} and \ref{lem:4}, we conclude that the irreducible $(\g\otimes A)^{\Gamma}$-module $V^{\Gamma}(\Psi)$ is the restriction of the irreducible $(\g\otimes A)$-module $V(\Psi)$ to $(\g\otimes A)^{\Gamma}$. 
\end{rem}
\begin{rem}
    Hereafter, for $\Psi'\in\varepsilon$ $(\Psi\in\varepsilon^{\Gamma})$, when we define the untwisted (twisted)  global Weyl module or Weyl functor with respect to $\lambda$, we mean $\lambda=\wt(\Psi')$ $(\lambda=\wt_{\Gamma}(\Psi))$.
\end{rem}
\begin{defn}(Map $V_{\lambda}^{\Gamma}$)\label{def:18}
    We have already seen that, since $\A$ is commutative, any irreducible representation of $\A$ is one dimensional. If $M$ is any such one dimensional $\A$ representation, then let $V_{\lambda}^{\Gamma}M$ denote the unique irreducible quotient of $\mathbf{W}_{\lambda}^{\Gamma}M$. Hence we define a map $V_{\lambda}^{\Gamma}$ from the set of irreducible $\A$ modules to the set of irreducible finite dimensional $(\g\otimes A)^{\Gamma}$ modules.
\end{defn}
\begin{defn}(Modules $M(\Psi')$ and $M^{\Gamma}(\Psi)$)\label{def:2}For $\Psi'\in\varepsilon$, we define $M(\Psi')=R_{\lambda'}V(\Psi')$ and for $\Psi\in\varepsilon^{\Gamma}$, define $M^{\Gamma}(\Psi)=R_{\lambda}^{\Gamma}V^{\Gamma}(\Psi)$, where $R_{\lambda'}$ is the untwisted restriction functor \cite{BCM19} and $R_{\lambda}^{\Gamma}$ is the twisted restriction functor.
\end{defn}
Let $\varepsilon_{\lambda'}$ \label{pg:12} be the set of all $\Psi'\in\varepsilon$ such that $\wt(\Psi')=\lambda'$. Similarly, let $\varepsilon_{\lambda}^{\Gamma}$ be the set of all $\Psi\in\varepsilon^{\Gamma}$ such that $\wt_{\Gamma}(\Psi)=\lambda.$
\begin{prop}\label{prop:4}
    \begin{enumerate}[label=(\alph*)]
\item For $\Psi\in\varepsilon^{\Gamma}_{\lambda}$, we have $V^{\Gamma}(\Psi)\cong V_{\lambda}^{\Gamma}M^{\Gamma}(\Psi)$.
\item For $\lambda\in \Lambda^{+}$, the maps $V_{\lambda}^{\Gamma}$ and $R_{\lambda}^{\Gamma}$ are mutually inverse bijections, from the set of irreducible finite dimensional $(\g\otimes A)^{\Gamma}$-modules whose highest weight as $\g^{\Gamma}$ module is $\lambda$ and the set of irreducible finite dimensional $\A$ modules.
    \end{enumerate}
\end{prop}
\begin{proof}
    \begin{enumerate}[label=(\alph*)]
\item We know that $W_{\lambda}^{\Gamma}M^{\Gamma}(\Psi)=W_{\lambda}^{\Gamma}R_{\lambda}^{\Gamma}V^{\Gamma}(\Psi)$. As in the proof of Theorem \ref{thm:8.4}, we can find a surjective map $\epsilon:W_{\lambda}^{\Gamma}R_{\lambda}^{\Gamma}V^{\Gamma}(\Psi)\rightarrow V^{\Gamma}(\Psi)$. Thus $V^{\Gamma}(\Psi)$ must be isomorphic to the irreducible quotient of $W_{\lambda}^{\Gamma}M^{\Gamma}(\Psi)$, which is $V_{\lambda}^{\Gamma}M^{\Gamma}(\Psi)$.
\item We know that every irreducible finite dimensional $(\g\otimes A)^{\Gamma}$-module having highest weight $\lambda$ is given by $V^{\Gamma}(\Psi)$. Thus by the above part $V_{\lambda}^{\Gamma}R_{\lambda}^{\Gamma}$ is the identity map on the set of such modules. Now for finite dimensional  irreducible $\A$ module $M$, we have that
\begin{equation*}
R_{\lambda}^{\Gamma}V_{\lambda}^{\Gamma}M=(V_{\lambda}^{\Gamma}M)_{\lambda}=(W_{\lambda}^{\Gamma}M)_{\lambda}=W^{\Gamma}(\lambda)_{\lambda}\otimes_{\A} M\cong \A\otimes_{\A} M =M.
\end{equation*}
\end{enumerate}
\end{proof}
From Proposition \ref{prop:4}, we conclude that $M^{\Gamma}(\Psi)$, given in Definition \ref{def:2}, is an irreducible $\A$-module. Just as in Section \ref{sec:8}, we give an equivalent definition for the local Weyl modules in terms of the action of the Weyl functor on an irreducible $\A$-module.  $W(\lambda)$ denotes the untwisted global Weyl module and $W_{\lambda}$ denotes the untwisted Weyl functor (see \cite{BCM19}).  
\begin{defn} [Local Weyl Modules $W(\Psi)$ and $W^{\Gamma}(\Psi)$]\label{def:3}
   For $\Psi\in\varepsilon$ with $wt(\Psi)=\lambda'$, we define the local Weyl module to be $W(\Psi)=W_{\lambda}M(\Psi)$. Similarly, for  $\Psi\in\varepsilon^{\Gamma}$ with $wt_{\Gamma}(\Psi)=\lambda$, we define the twisted local Weyl module to be $W^{\Gamma}(\Psi)=W_{\lambda}^{\Gamma}M^{\Gamma}(\Psi)$.
\end{defn}
As already discussed in Remark \ref{rem:2}, the untwisted local Weyl modules corresponding to $\Psi$ is the tensor product of the local Weyl modules defined at each individual point. We have already proved in Theorem \ref{thm:4} that the local Weyl module $\LW$, for $\psi\in((\h\otimes A)^{\Gamma})^{*}$, is finite dimensional. Finite tensor product of finite dimensional spaces are again finite dimensional. Hence the local Weyl modules, $W(\Psi)$ are finite dimensional $(\g\otimes A)$  modules. As such, from Proposition \ref{prop:3} (taking $\Gamma$ trivial),  we can talk about the existence of annihilating ideals $(\g\otimes I_{\Psi})$, of finite codimension, for $W(\Psi)$. From the definition of $I_{\Psi}$, we get, $W(\Psi)\in\mathcal{F}_{\textbf{x}}$ for $\Psi\in\varepsilon$ and $ \Supp\ (\Psi)\in\textbf{x}$.
\smallskip

In the next proposition, we show that the untwisted local Weyl module given in Definition \ref{def:3} is equivalent to Definition \ref{def:1}. 
 \begin{prop}
Let $\Psi\in\varepsilon$ such that $\wt(\Psi)=\lambda$ and $W(\Psi)$ be the untwisted local Weyl module generated by $w_{\Psi}$ satisfying the relation \eqref{eq:6}. Then $W(\Psi)\cong W_{\lambda}M(\Psi)$, where $W_{\lambda}$ is the (untwisted) Weyl functor corresponding to the dominant integral weight $\lambda$.
\end{prop}
\begin{proof}
From the generator relation definition of the local Weyl module, we know that, $W(\Psi)=\mathbf{U}(\g\otimes A)w_{\Psi}$. Elements of $(\h\otimes A)$ acts on $W(\Psi)$ by scalar multiples and this is given by the equation
    \begin{equation*}
        \beta w_{\Psi}=\biggl(\sum_{x\in Supp\ \Psi}\Psi(x)(\beta(x))\biggl)w_{\Psi}, \quad \beta\in(\h\otimes A).
    \end{equation*}
   Let $V(\Psi)$ be the highest weight $(\g\otimes A)-$module. Then $R_{\lambda}(V(\Psi))=(V(\Psi))_{\lambda}=\mathbf{U}(\h\otimes A)v_{\Psi}=\mathbb{K}v_{\Psi}$, where $v_{\Psi}$ is the highest weight vector for $V(\Psi)$ given by $v_{\Psi}=v_{\Psi(x_{1})}\otimes\cdots\otimes v_{\Psi(x_{k})}$. As $\h\otimes A$ modules, $(W(\Psi))_{\lambda}\xrightarrow{\cong}(V(\Psi))_{\lambda}$, mapping $w_{\Psi}\rightarrow v_{\Psi}$. Hence there exists a $(\g\otimes A)$-module homomorphism $\varepsilon:W_{\lambda}R_{\lambda}V(\Psi)\rightarrow W(\Psi)$ such that 
   \begin{equation*}
       \varepsilon(uw_{\lambda}\otimes v_{\Psi})=uw_{\Psi} \quad \forall~u\in\mathbf{U}(\g\otimes A).
   \end{equation*}
   This is a surjective map.  By Remark \ref{rem:1},  $W_{\lambda}R_{\lambda}V(\Psi)$ is a finite dimensional $(\g\otimes A)$-module, generated by the highest weight vector $1\otimes v_{\Psi}$ and hence satisfies all the relations in \eqref{eq:6}. Hence we get a $(\g\otimes A)$ module homomorphism $\eta:W(\Psi)\rightarrow W_{\lambda}R_{\lambda}V(\Psi)$ such that $\eta(w_{\Psi})=1\otimes v_{\Psi}$. This is surjective. Hence $\eta\circ\varepsilon=id_{W_{\lambda}R_{\lambda}V(\Psi)}$ and $\varepsilon\circ\eta=id_{W(\Psi)} $. Thus we get $W(\Psi)\cong W_{\lambda}M(\Psi)$.
\end{proof}
Now, we give the necessary and sufficient conditions for a $(\g\otimes A)^{\Gamma}$-module to be a local Weyl module. In order to prove this, we need certain homological properties associated to the category $\mathcal{I}_{\lambda}^{\Gamma}$.
\begin{thm}
    Let $V\in \mathcal{I}_{\lambda}^{\Gamma}$. Then $V\cong W_{\lambda}^{\Gamma}R_{\lambda}^{\Gamma}V$ if and only if, for each $U\in\mathcal{I}_{\lambda}^{\Gamma}$ with $U_{\lambda}=0$, we have,
    \begin{equation}
        \Hom_{\mathcal{I}_{\lambda}^{\Gamma}}(V,U)=0, \quad \Ext^{1}_{\mathcal{I}_{\lambda}^{\Gamma}}=0.
    \end{equation}
\end{thm}
\begin{proof}
The proof follows similarly as in \cite[Theorem 4.10]{FMS15}.
\end{proof}
\begin{cor}\label{cor:3}
    Let $V\in \mathcal{I}_{\lambda}^{\Gamma}$. Then $V\cong W_{\lambda}^{\Gamma}R_{\lambda}^{\Gamma}V$ if and only if,
     \begin{equation}
        \Hom_{\mathcal{I}_{\lambda}^{\Gamma}}(V,U)=0, \quad \Ext^{1}_{\mathcal{I}_{\lambda}^{\Gamma}}=0.
    \end{equation}
     for all irreducible finite-dimensional $U\in\mathcal{I}^{\Gamma}_{\lambda}$ with $U_{\lambda}=0$. 
\end{cor}

\begin{lemma}\label{lem:11}
    A $(\g\otimes A)^{\Gamma}-$module $V$ is isomorphic to the local weyl module $W^{\Gamma}(\Psi)$ if and only if it satisfies the following condition:
    \begin{enumerate}[label=(\alph*)]
        \item $V\in\mathcal{I}_{\lambda}^{\Gamma}$, where $\lambda=\wt_{\Gamma}(\Psi)$
        \item $R_{\lambda}^{\Gamma}V\cong M^{\Gamma}(\Psi)$
        \item $\Hom_{\mathcal{I}_{\lambda}^{\Gamma}}(V,U)=0$ and $\Ext^{1}_{\mathcal{I}_{\lambda}^{\Gamma}}(V,U)=0$ for all irreducible finite dimensional $U\in\mathcal{I}_{\lambda}^{\Gamma}$ with $U_{\lambda}=0$.
    \end{enumerate}
\end{lemma}
\begin{proof}
Let V be a $(\g\otimes A)^{\Gamma}-$module such that it satisfies the above conditions. Then by Corollary \ref{cor:3}, we get 
\begin{equation*}
V\cong W_{\lambda}^{\Gamma}R_{\lambda}^{\Gamma}V\cong W_{\lambda}^{\Gamma}M^{\Gamma}(\Psi)=W^{\Gamma}(\Psi),
\end{equation*}
where the second isomorphism follows from (ii) listed above and the third equality follows from the definition of $W^{\Gamma}(\Psi)$.
     
Conversely, the first condition follows directly from the definition of $W^{\Gamma}(\Psi)$. $R_{\lambda}^{\Gamma}W^{\Gamma}(\Psi)=R_{\lambda}^{\Gamma}W_{\lambda}^{\Gamma}M^{\Gamma}(\Psi)\cong M^{\Gamma}(\Psi)$ (by Proposition \ref{prop:4}). This proves (ii).\\ $W_{\lambda}^{\Gamma}R_{\lambda}^{\Gamma}W^{\Gamma}(\Psi)\cong W_{\lambda}^{\Gamma}M^{\Gamma}(\Psi)= W^{\Gamma}(\Psi)$. Hence by Corollary \ref{cor:3}, $W^{\Gamma}(\Psi)$ satisfies (iii) also. This completes the proof.
\end{proof}
Now we prove the main theorem of this section.
\begin{thm}
Suppose that $\Gamma$ acts freely on $X_{rat}$ and on $\g$ by diagram automorphisms. Then $W^{\Gamma}(\Psi)=\textbf{T}_{\mathbf{x}}(W({\Psi}_{\mathbf{x}}))$ for all $\Psi\in\varepsilon^{\Gamma}$ and $\mathbf{x}\in (\Supp\Psi)_{\Gamma}$. 
\end{thm}
\begin{proof}
We prove this using the three equivalent conditions for a twisted local Weyl module $W^{\Gamma}(\Psi)$ given in Lemma \ref{lem:11}.

Let $\Psi\in\varepsilon^{\Gamma}$ and $\textbf{x}\in(\Supp\ \Psi)_{\Gamma}$. Let $\lambda'=\wt\Psi_{\textbf{x}}$ and $\lambda=\wt_{\Gamma}\Psi$, such that $\lambda=\lambda'|_{\h_{\Gamma}}$. Then $W(\Psi_{\textbf{x}})\in\mathcal{I}_{\lambda'}$ and so from Lemma \ref{lem:13}, we have $\textbf{T}_{\textbf{x}}(W(\Psi_{\textbf{x}}))\in\mathcal{I}^{\Gamma}_{\lambda}$.

Let $V^{\Gamma}(\mu),  \mu\in\varepsilon^{\Gamma}$, be an irreducible finite dimensional object in $\mathcal{I}_{\lambda}^{\Gamma}$ with $(V^{\Gamma}(\mu))_{\lambda}=0$. This implies that $\wt_{\Gamma}\mu\in\lambda-Q^{+}_{\Gamma}\in \wt_{\Gamma}\Psi-Q^{+}_{\Gamma}$. Thus we have $\het\mu_{\textbf{x}}<\het\Psi_{\textbf{x}}$. We take $\textbf{x}$ in such a manner that $\Supp~\mu\in\Gamma. x$ and hence $V^{\Gamma}(\mu)\in\mathcal{F}_{\textbf{x}}^{\Gamma}$. For $l=0,1$, we get
\begin{equation*}
     \Ext^{l}_{\mathcal{F}_{\textbf{x}}^{\Gamma}}(\textbf{T}_{\textbf{x}}(W(\Psi_{\textbf{x}})),V^{\Gamma}(\mu))=\Ext^{l}_{\mathcal{F}_{\textbf{x}}^{\Gamma}}(\textbf{T}_{\textbf{x}}(W(\Psi_{\textbf{x}})),\textbf{T}_{\textbf{x}}(V(\mu_{\textbf{x}})))=\Ext^{l}_{\mathcal{F}_{\textbf{x}}^{\Gamma}}(W(\Psi_{\textbf{x}}),V(\mu_{\textbf{x}}))=0
\end{equation*}
 where, the last equality comes from \cite[theorem A.3]{BCM19}. 
 
 The weight space of $W(\Psi_{\textbf{x}})_{\lambda'}$ is isomorphic to the weight space $V(\Psi_{\textbf{x}})_{\lambda'}$ as an $(\h\otimes A)-$module. We had already shown in Theorem \ref{thm:3} that, $\textbf{T}_{\textbf{x}}(V(\Psi_{\textbf{x}}))=V^{\Gamma}(\Psi)$. Now if we restrict the action to $(\h\otimes A)^{\Gamma}$, we get that $(\textbf{T}_{\textbf{x}}(W(\Psi_{\textbf{x}})))_{\lambda}$ is isomorphic to $V^{\Gamma}(\Psi)_{\lambda}$ as $(\h\otimes A)^{\Gamma}-$modules and hence as $A_{\lambda}^{\Gamma}$- modules. This isomorphism is because the twisting functor $\textbf{T}_{\textbf{x}}$ is an isomorphism of categories. Therefore, we get that $R_{\lambda}^{\Gamma}(\textbf{T}_{\textbf{x}}(W(\Psi_\textbf{x})))=(\textbf{T}_{\textbf{x}}(W(\Psi_{\textbf{x}}))_{\lambda}\cong (V^{\Gamma}(\Psi))_{\lambda}\cong M^{\Gamma}(\Psi)$. Thus by Lemma \ref{lem:11}, $W^{\Gamma}(\Psi)=\textbf{T}_{\textbf{x}}(W(\Psi_{\textbf{x}}))$.
\end{proof}

{\bf Acknowledgment:} Authors are supported by a grant from the Anusandhan National Research Foundation (ANRF)(File No.: SRG/2023/002255 ).



\bibliographystyle{plain}
\bibliography{Nayak-SERB-plan}

\end{document}